\documentclass[13pt,oneside]{article}
\usepackage{times,amsmath,amsbsy,amssymb,amscd,mathrsfs,graphicx}
\usepackage{amssymb}
\usepackage{amsmath,bm}
\usepackage{amsmath,amsthm}

\usepackage{subcaption}
\usepackage{framed}
\usepackage{xcolor,color}
\usepackage[font=small,labelfont=md,textfont=it]{caption}
\usepackage{floatrow,float}
\usepackage[titletoc, title]{appendix}
\usepackage[colorlinks,linkcolor=blue,citecolor=blue]{hyperref}
\usepackage{longtable}
\usepackage{pgfplots}
\usepackage{diagbox}
\usepackage{booktabs,makecell,multirow}
\usepackage[capitalise, nosort]{cleveref}
\usepackage{algorithm}
\usepackage{algorithmic}
\usepackage{amsmath,bm}
\usepackage{cases}
\usepackage{geometry}
\usepackage{extarrows}
\usepackage{tcolorbox}
\usepackage{bbm}
\usepackage{syntonly}
\usepackage{siunitx}
\usepackage[figuresright]{rotating}
\usepackage{epstopdf}

\crefname{equation}{}{}
\crefname{lem}{Lemma}{Lemmas}
\crefname{coro}{Corollary}{Corollaries}
\crefname{thm}{Theorem}{Theorems}

\newcommand{\dd}{\,{\rm d}}

\newcommand{\R}{\,{\mathbb R}}
\newcommand{\diag}[1]{{\rm diag}\left({#1} \right)}

\newcommand{\dual}[1]{\left\langle {#1} \right\rangle}

\newcommand{\argmin}[0]{ {\mathop{{\rm  argmin}}\,}}

\newcommand{\argmax}[0]{ {\mathop{{\rm  argmax}}\,}}

\newcommand{\nm}[1]{\left\lVert {#1} \right\rVert}
\newcommand{\snm}[1]{\left\lvert {#1} \right\rvert}

\newcommand{\ssnm}[1]
{
	\left\vert\kern-0.25ex
	\left\vert\kern-0.25ex
	\left\vert
	{#1}
	\right\vert\kern-0.25ex
	\right\vert\kern-0.25ex
	\right\vert
}

\makeatletter
\def\spher@harm#1{%
	\vbox{\hbox{%
			\offinterlineskip
			\valign{&\hb@xt@2\p@{\hss$##$\hss}\vskip.2ex\cr#1\crcr}%
		}\vskip-.36ex}%
}
\def\gshone{\spher@harm{.}}
\def\gshtwo{\spher@harm{.&.}}
\def\gshthree{\spher@harm{.&.&.}}
\let\gsh\spher@harm
\makeatother

\newtheorem{coro}{Corollary}[section]

\newtheorem{prop}{Proposition}[section]

\newtheorem{lem}{Lemma}[section]

\newtheorem{rem}{Remark}[section]

\newtheorem{thm}{Theorem}[section]

\newtheorem{define}{Definition}

\newcolumntype{I}{!{\vrule width 1,5pt}}
\newlength\savedwidth

\newlength\savewidth

\newcounter{mnote}
\let\oldmarginpar\marginpar
\renewcommand\marginpar[1]
{\-\oldmarginpar[\raggedleft\footnotesize #1]{\raggedright\footnotesize #1}}

\makeatletter\def\@captype{table}\makeatother

\newfloatcommand{capbtabbox}{table}[][\FBwidth]
\usepackage[T1]{fontenc}
\usepackage[utf8]{inputenc}
\usepackage{authblk}
\usepackage[all]{xy}

\usepackage{graphicx}
\usepackage{tikz}
\tikzset{elegant/.style={smooth,thick,samples=50,black}}
\tikzset{eaxis/.style={->,>=stealth}}

\usepackage[misc]{ifsym}

\begin{document}
\title{
	\Large \bf A unified high-resolution ODE framework for first-order methods\thanks{This work was supported by the National Natural Science Foundation of China (Grant No. 12401402), the Science and Technology Research Program of Chongqing Municipal Education Commission (Grant Nos. KJZD-K202300505), the Natural Science Foundation of Chongqing (Grant No. CSTB2024NSCQ-MSX0329) and the Foundation of Chongqing Normal University (Grant No. 22xwB020).}}

\author[,1]{Lixia Wang \thanks{Email: 2023110510035@stu.cqnu.edu.cn}}
	\author[,1,2]{Hao Luo \thanks{Email: luohao@cqnu.edu.cn; luohao@cqbdri.pku.edu.cn}}
\affil[1]{National Center for Applied Mathematics in Chongqing, Chongqing Normal University, Chongqing, 401331, China}
\affil[2]{Chongqing Research Institute of Big Data, Peking University,  Chongqing, 401121, China}

	\date{\today}
	\maketitle
	
\begin{abstract}
For a generic discrete-time algorithm (DTA): $z^+=g(z,s)$, where $s$ is the step size, Lu (Math. Program., 194(1):1061--1112, 2022) proposed an $O(s^r)$-resolution ordinary differential equation (ODE) framework based on the backward error analysis, which can be used to analyze many DTAs satisfying the fixed point assumption $g(z,0)=z$ such as gradient descent, extra gradient method and primal-dual hybrid gradient (PDHG). However, most first-order methods with momentum violate this critical assumption. To address this issue, in this work, we introduce a novel $O((\sqrt{s})^r)$-resolution ODE framework for accelerated first-order methods allowing momentum and variable parameters, such as Nesterov accelerated gradient (NAG), heavy-ball (HB) method and accelerated mirror gradient. The proposed high-resolution framework provides deeper insight into the convergence properties of DTAs. Especially, although the $O(1)$-resolution ODEs for HB and NAG are identical, their $O(\sqrt{s})$-resolution ODEs differ from the subtle existence of the Hessian-driven damping. Moreover, we propose a high-resolution correction approach and apply it to PDHG and HB for provably convergent modifications that achieve global optimal convergence rates. Numerical results are reported to confirm the theoretical predictions.
\end{abstract}
	
	
\section{Introduction}
\label{sec:intro}
In recent years, first-order optimization methods have attracted much attention because of its wide applications to many research fields such as data science, machine learning and image processing. Generally speaking, first-order methods can be recast into an abstract one-step discrete-time algorithm (DTA)
\begin{equation}\label{eq:z+g}
	z^+=g(z,s),
\end{equation}
where $s>0$ denotes the step size and $g:\mathcal Z\times\R_+\to\mathcal Z$ stands for the iterative mapping. Here and throughout, $\mathcal Z$ is a finite dimensional Hilbert space with the inner product $\dual{\cdot,\cdot}$ and the induced norm $\nm{\cdot}=\sqrt{\dual{\cdot,\cdot}}$. 

Such a template \cref{eq:z+g} can be naturally recognized as  proper numerical discretization of some underlying continuous-time ordinary differential equation (ODE). In fact, there has been a historically recognized connection between DTAs and ODEs.
The simplest case is the gradient descent (GD) which aims to minimize a
function $F:\R^n\to\R$ via the update
\begin{equation}\label{eq:gd}
	\tag{GD}
	x_{k+1}=x_k-s\nabla F(x_k).
\end{equation}
The continuous limit leads to the gradient flow
\begin{equation}\label{eq:gd-gf}
	x'+\nabla F(x)=0.
\end{equation}
Under proper smooth assumptions, we have the one-step local error bound $\nm{x(s)-x_1}\leq O(s^2)$. In addition, it is known that the differential equation \cref{eq:gd-gf} converges with the decay rate $F(x(t))-F^*\leq O(1/t)$ for convex functions (cf.\cite{chen_luo_unified_2021}). Interestingly, \cref{eq:gd} admits a sublinear rate $F(x_k)-F^*\leq O(1/k)$, which is consistent with the continuous flow. In other words, the gradient flow \cref{eq:gd-gf} matches \cref{eq:gd} locally and globally. From this point of view, the continuous dynamical system approach not only gives an alternate way to understand DTAs but also provides more tools to design and analyze  discrete algorithms, especially first-order optimization methods.

\subsection{Low-resolution models}
\label{sec:intro-low}
Except for the simple case \cref{eq:gd}, there are two well-known first-order methods, the heavy ball (HB) method and Nesterov's accelerated gradient (NAG), attracting many attentions to the investigations on both discrete-time and continuous-time levels \cite{attouch_fast_2018,luo_differential_2022,siegel_accelerated_2019,wilson_lyapunov_2021,shi_understanding_2021,su_dierential_2016,wibisono_variational_2016}. In the following, we mainly focus on low-resolution models of these two methods. For more low-resolution works related to first-order primal-dual methods, we refer to 
\cite{Luo_acclerated_2024,luo_universal_2024,luo_unified_2025,luo_acc_primal-dual_2021,luo_icpdps_2025,Li2024,Li2024a,luo_primal-dual_2022,Apidopoulos2025}. 

In \cite{Polyak_Some_1964}, Polyak studied the HB method
\begin{equation}\label{eq:hb}\tag{HB}
	x_{k+1} = x_k + \beta(x_{k}-x_{k-1}) - s\nabla F(x_k).
\end{equation} 
For smooth and strongly convex objective $F\in \mathcal{S}^{2,1}_{\mu,L}$ (cf.\cref{sec:notation}), the local optimal linear rate $O((1-\sqrt{1/\kappa})^k)$ has been established in \cite[Theorem 9]{Polyak_Some_1964} via spectrum analysis, and the optimal parameters for quadratic objectives are 
\begin{equation}\label{eq:s-beta-hb}
	s_{\rm hb}=4(\sqrt{L}+\sqrt{\mu})^{-2}\quad\text{and}\quad\beta_{\rm hb}=(1-\sqrt{\mu s_{\rm hb}})^2.
\end{equation}
This setting results in a low-resolution ODE for \cref{eq:hb}:
\begin{equation}\label{eq:hb-o1}
	x''+2\sqrt{\mu }x'+\nabla F(x)=0,
\end{equation}
which converges exponentially $F(x(t))-F^*\leq O(e^{-2\sqrt{\mu}t})$ as long as $F\in\mathcal S_{\mu,L}^1$; see \cite{luo_differential_2022,siegel_accelerated_2019,wilson_lyapunov_2021}. For an alternate setting (almost identical to \cref{eq:s-beta-hb})
\begin{equation}\label{eq:s-beta-shi}
	s=1/L\quad\text{and}\quad \beta=(1-\sqrt{\mu s})/(1+\sqrt{\mu s}),
\end{equation}
Shi et al. \cite{shi_understanding_2021} derived the following model
\begin{equation}\label{eq:hb-os-shi}
	x''+2\sqrt{\mu }x'+\left(1+\sqrt{\mu s}\right)\nabla F(x)=0,
\end{equation}
and established the decay rate $F(x(t))-F^*\leq O(e^{-\sqrt{\mu}t/4})$ for $F\in\mathcal S_{\mu,L}^{2,1}$.

However, Lessard et al. \cite{lessard_analysis_2016} constructed a counterexample demonstrating that Polyak's optimal choice \cref{eq:s-beta-hb} for \cref{eq:hb} does not ensure global convergence for general strongly convex objectives. For proper range of parameters $(\beta, s)$, the global {\it suboptimal} linear rate $O((1-1/\kappa)^k)$ has been proved by \cite{ghadimi_global_2015, sun_non-ergodic_2019,shi_understanding_2021}. 
Later on, Goujaud et al. \cite{goujaud2025provable} showed that \cref{eq:hb} provably fails to achieve the optimal rate $O((1-\sqrt{1/\kappa})^k)$ for $F\in \mathcal{S}^1_{\mu,L}$. Recently, Wei and Chen \cite{wei2024accelerated} introduced the accelerated over-relaxation heavy-ball (AOR-HB) method that applies the over-relaxation technique to the gradient term in \cref{eq:hb}, and established a provably global accelerated linear rate $O((1-\sqrt{1/\kappa})^k)$. Nevertheless, we claim that the continuous-time heavy-ball models \cref{eq:hb-o1,eq:hb-os-shi} match \cref{eq:hb} locally but not globally, and there is still a gap between \cref{eq:hb} and its continuous level:  \\
\textit{{\bf Question 1}: why does the low-resolution models \cref{eq:hb-o1,eq:hb-os-shi} converge while the discrete case \cref{eq:hb} does not yield optimal rate or even diverges?}

The next major development was due to Nesterov, who discovered an accelerated gradient method  \cite{Nesterov1983AMF,Nesterov2004}:
\begin{equation}\label{eq:nag}\tag{NAG}
x_{k+1} = x_k + \beta_k(x_{k}-x_{k-1}) - s\nabla F(x_k + \beta_k(x_{k}-x_{k-1})), 
\end{equation}
which differs from \cref{eq:hb} in the gradient term.
With appropriate
choices of $\beta_{k}$, \cref{eq:nag} converges with the sublinear rate $O(1/k^2)$ for $F\in  \mathcal{F}_L^{1}$ and the linear rate $O((1-\sqrt{1/\kappa})^k)$ for $F\in  \mathcal{S}_{\mu,L}^{1}$,  achieving the optimal complexity of first-order methods. From a continuous-time perspective, Su et al. \cite{su_dierential_2016} derived the low-resolution model of \cref{eq:nag} with $\beta_{k}=k/(k+3)$ and $s=1/L$ (in this case, the method is abbreviated as NAG-C) 
\begin{equation}\label{eq:avd}
x''+\frac{3}{t}x'+\nabla F(x)=0,
\end{equation}
which yields the decay rate $F(x(t))-F^*\leq O(1/t^2)$ for $F\in\mathcal F^1_L$. This is also called the asymptotically vanishing damping model due to Attouch et al. \cite{attouch_rate_2019,attouch_fast_2018}. For strongly convex objectives, one can adopt the constant choice \cref{eq:s-beta-shi} (in this case, the method is abbreviated as NAG-SC), which is very close to the optimal choice \cref{eq:s-beta-hb} of \cref{eq:hb} and yields the identical low-resolution ODE \cref{eq:hb-o1}; see \cite{luo_differential_2022,siegel_accelerated_2019,wilson_lyapunov_2021,shi_understanding_2021} and \cref{rem:compare}. Therefore, we can not distinguish \cref{eq:nag,eq:hb} from the continuous model \cref{eq:hb-o1} and there comes the following question: \\
\textit{{\bf Question 2}: how can we find the difference between \cref{eq:nag,eq:hb} from the continuous level?}

\subsection{High-resolution models}
In addition to low-resolution models, the high-resolution approach has also been applied to DTAs, for better capturing the behaviors and properties of the discrete case.  An interesting work by Shi et al. \cite{shi_understanding_2021} showed that for NAG-SC,  a careful high-order Taylor expansion yields the high-resolution model
\begin{equation}\label{eq:os-nag-sc}
x''+2\sqrt{\mu }x'+(1+\sqrt{\mu s})\nabla F(x)+\sqrt{s}\nabla^2F(x)x'=0,
\end{equation}
which converges with the rate $F(x(t))-F^*\leq O(e^{-\sqrt{\mu}t/4})$ for $F\in\mathcal S_{\mu,L}^{2,1}$. Similarly, there is also a high-resolution model for NAG-C:
\begin{equation}\label{eq:os-ode-nag-c}
x''+\frac{3}{t}x'+\left(1+\frac{3\sqrt{s}}{2t}\right)\nabla F(x)+\sqrt{s}\nabla^2F(x)x'=0,
\end{equation}
with the rate $F(x(t))-F^*\leq O(1/t^2)$ for $F\in\mathcal F^2_L$. Compared with the low-resolution models \cref{eq:hb-o1,eq:avd} for NAG-SC and NAG-C, both \cref{eq:os-nag-sc,eq:os-ode-nag-c} have additional $O(\sqrt{s})$-terms involving the Hessian information. This is the so-called {\it gradient correction} \cite[Section 1.1]{shi_understanding_2021} explaining why \cref{eq:nag} is more stable than \cref{eq:hb}. We note that this is very close to the mechanism of the {\it Hessian-driven damping} by \cite{Attouch2020f,chen_first_2019}.  

Instead of directly  using the Taylor expansion, the {\it backward error analysis} \cite{Hairer_2006_Geometric} provides a more systematic approach for analyzing DTAs. It aims to find a modified of the low-resolution model that are more close to the given DTA. The implicit gradient regularization \cite{Barrett2021}  gives a high-resolution model of \cref{eq:gd}:
\begin{equation}\label{eq:os-gd}
x' + \nabla F(x)+\frac{s}{2}\nabla^2 F(x)\nabla F(x) = 0,
\end{equation}
with a high-order local error $\nm{x(s)-x_1}\leq O(s^3)$. Lu \cite{lu_osr-resolution_2022} proposed an $O(s^r)$-resolution ODE framework for the abstract DTA template \cref{eq:z+g} with the {\it fixed-point assumption}
\begin{equation}\label{eq:fp-assum}
g(z,0) = z\quad\forall\,z\in\mathcal Z.
\end{equation}
This covers many existing first-order methods for unconstrained problems and minimax problems such as proximal point algorithm, proximal gradient method, gradient descent-ascent (GDA), extra-gradient method (EGM) and primal-dual hybrid gradient (PDHG). As we all know, GDA is divergent even for convex-concave minimax problems while EGM converges. In \cite[Section 2.2]{lu_osr-resolution_2022}, Lu found that the $O(1)$-resolution ODEs of these two methods are the same but the $O(s)$-resolution models are different. This subtle difference provides better understanding on GDA and EGM and results in a new algorithm, called the Jacobian method, which is based on the high-order correction of GDA and applied to bilinear minimax problems.  

However, as noted at the end of \cite[Section 2.2]{lu_osr-resolution_2022}, the current $O(s^r)$-resolution ODE framework cannot be applied to first-order methods with momentum which violates the fixed-point assumption \cref{eq:fp-assum}:
\textit{\begin{quote}
	``However, this framework does not apply directly to Nesterov's accelerated method for minimizing a strongly-convex function, because  \text{$g(z, 0) \neq z$} due to the existence of the momentum term in the algorithm, which violates our assumption on the function \text{$g(z, s)$}.''
\end{quote}
} 
Therefore, it is of interest to extend the such a framework to accelerated first-order methods and here comes another question: \\
\textit{{\bf Question 3}: how to develop a high-resolution ODE framework for first-order methods with momentum and variable parameters?}

\subsection{Main contributions}

Focusing on the three questions mentioned in the last two sections, based on the $O(s^r)$-resolution idea from \cite{lu_osr-resolution_2022}, we proposed a unified $O((\sqrt{s})^r)$-resolution ODE framework for accelerated first-order methods. The key is to transform an accelerated gradient method into the DTA template \cref{eq:z+g} 
with the step size $\sqrt{s}$ (instead of $s$):
\[
X^+ = \Phi(X,\sqrt{s}),
\]
where the mapping $\Phi:\mathcal X\times\R_+\to\mathcal X$ satisfies $\Phi(X,0) = X$ for all $X\in\mathcal X$. To further explain the main idea, we introduce $v_k = (x_k-x_{k-1})/\sqrt{s}$ and rewrite  \cref{eq:hb} as follows
\begin{equation*}
\left\{
\begin{aligned}
	x_{k+1} = {}&x_k+\sqrt{s}v_{k+1},\\
	v_{k+1} ={}& v_k+(\beta-1) v_k -\sqrt{s}\nabla F(x_k).
\end{aligned}
\right. 
\end{equation*}
It is clear that $(x_{k+1},v_{k+1})=\Phi(x_k,v_k,\sqrt{s})$ with 
\[
\Phi(X,\sqrt{s}):=\begin{bmatrix}
x+\sqrt{s}\left[	\beta v-\sqrt{s}\nabla F(x)\right]\\
\beta v-\sqrt{s}\nabla F(x)
\end{bmatrix},\quad X= (x,v).
\]
Assume $\beta=\beta(s)$ is smooth such that $\lim_{s\to 0}\beta(s) =1$, then  $\Phi(X,0) = X$, thereby satisfying the fixed point assumption \cref{eq:fp-assum}.
This novel transformation technique overcomes the challenges posed by momentum and variable parameters in accelerated first-order methods and gives a positive answer to {\bf Question 3}. 

With the proposed $O((\sqrt{s})^r)$-resolution framework, we systematically derive $O(\sqrt{s})$-resolution ODEs for first-order algorithms including HB, NAG, and accelerated mirror descent (AMD), yielding new insights into the comparisons between HB and NAG. This also provides convincible explains for {\bf Question 1} and {\bf Question 2} and rebuilds the results by Shi et al. \cite{shi_understanding_2021}: the hidden gradient correction effect or Hessian-driven damping term makes NAG more stable than HB, which only involves the velocity correction; see Remarks \ref{rem:hb-os} and \ref{rem:compare}.
As by products, we use the high-resolution correction idea to propose two convergent modifications of PDHG and HB, and prove the global optimal convergence rates via the Lyapunov analysis. 

\subsection{Notations and organization}
\label{sec:notation}
For $k\in\mathbb N$ and $d\in\mathbb N$, denote by $C^k(\R^d)$ the set of all $k$-times continuous differentiable functions on $\R^d$, and the subclass
$\mathcal F^k(\R^d)\subset C^k(\R^d)$ contains all convex functions in $C^k(\R^d)$. For every $f\in\mathcal F^1(\R^d)$, denote by $f^*$ the conjugate function of $f$. If $f\in\mathcal F^{k}(\R^d)$ has $L$-Lipschitz continuous gradient: $\| \nabla f(x) - \nabla f(y) \| \leq L \| x - y \|$ for all $x, y \in \mathbb{R}^d$, then we say $f\in\mathcal F_{L}^{k,1}(\R^d)$. When $k=1$, for simplicity, we write $\mathcal F_L^{1,1}(\R^d) = \mathcal F_L^1(\R^d)$. According to \cite[Theorem 2.1.5]{Nesterov2004}, for $f\in \mathcal{F}_{L }^1(\R^d)$, we have the estimate
\begin{equation}\label{eq:low-bound}
\dual{\nabla f(x)-\nabla f(y),x-y} \geq\frac{1}{L}\|\nabla f(x)-\nabla f(y)\|^2\quad\forall\,x,\,y\in\R^d.
\end{equation}

Moreover,  the function class $\mathcal{F}^2_L(\mathbb{R}^n)$
is the subclass of $\mathcal{F}^1_L(\mathbb{R}^n)$ such that each $f$ has a Lipschitz-continuous Hessian. Let $ \mathcal S_{\mu}^k(\R^d)\subset C^k(\R^d)$ be the set of all strongly convex functions in $C^k$ with the common convexity parameter $\mu>0$, which means for any $f\in\mathcal S_\mu^k(\R^d)$ we have $f(y) \geq f(x) + \langle \nabla f(x), y - x \rangle + \mu/2 \| y - x \|^2$ for all $x, y \in \mathbb{R}^d$. Denote by $ \mathcal S_{\mu,L}^{k,1}(\R^d):=\mathcal S_\mu^k(\R^d)\cap\mathcal F_L^1(\R^d)$. 
A key inequality for our subsequent analysis is that, for $f \in \mathcal{S}_{\mu,L}^1(\R^d)$,
\[ 
f(x)-f(y) \leq \dual{\nabla f(x)-\nabla f(y),x-y}- \frac{1}{2L}\|\nabla f(x)-\nabla f(y)\|^2\quad\forall\,x,\,y\in\R^d.
\]
When no confusion arises, we omit the underlying space $\R^d$ of the function classes.

The remainder of this paper is organized as follows. In \cref{sec:frame}, we review the high-resolution ODE framework from \cite{lu_osr-resolution_2022} and derive the corresponding ODEs for some typical first-order methods without momentum. The framework is extended in \cref{sec:Os-am}  to accelerated first-order methods via a novel transformation technique, with examples illustrating its applicability. Subsequently, in \cref{sec:ode to dta-pdhg,sec:ode to dta-hb}, we propose proper modifications of HB and PDHG via time discretizations of the high-resolution corrected ODEs, and establish the corresponding convergence rates by tailored  Lyapunov functions. In \cref{sec:ex-hb}, some numerical tests are provided to validate our theoretical results. Finally, concluding remarks are summarized in \cref{sec:con}.

\section{The $O(s^r)$-Resolution Framework}
\label{sec:frame}
In this part, we revisit the high-resolution framework \cite{lu_osr-resolution_2022}. In \cref{sec:osr}, following \cite[Section 2]{lu_osr-resolution_2022}, we introduce the so-called $O(s^r)$-resolution ODE framework for a given DTA \cref{eq:z+g} with the fixed-point assumption $g(z,0) = z$. After that, we provide in \cref{sec:osr-gd} a detailed investigations on the $O(s^r)$-resolution ODEs of several DTAs.  
\subsection{The high-resolution ODE}
\label{sec:osr}
Firstly, let us recall the definition of the $O(s^r)$-resolution ODE from \cite[Section 2]{lu_osr-resolution_2022}.
\begin{define}[\cite{lu_osr-resolution_2022}]
	\label{def:osr}
	For a given DTA \cref{eq:z+g} with $g(z,0)=z$ for all $z\in\mathcal Z$, if there is an ODE system with the following format
	\begin{equation}\label{eq:osr}
		Z'(t) = f_0(Z(t)) + sf_1(Z(t))+\cdots+s^rf_r(Z(t)),\quad Z(0)=z,
	\end{equation}
	that satisfies $\nm{Z(s)-z^+} = o\left(s^{r+1}\right)$ with $r\geq 0$, then we call \cref{eq:osr} the $O(s^r)$-resolution ODE of the DTA \cref{eq:z+g}.
\end{define}
By \cite[Theorem 1]{lu_osr-resolution_2022}, the $O(s^r)$-resolution ODE of the DTA \cref{eq:z+g}  with $g(z,0)=z$ exists uniquely. This has been summarized in the following theorem.
\begin{thm}[\cite{lu_osr-resolution_2022}]
	\label{thm:unique}
	Given a DTA \cref{eq:z+g} with a sufficiently smooth  mapping $g:\mathcal Z\times \R_+\to\mathcal Z$ such that $g(z,0)=z$ for all $z\in\mathcal Z$, then its $O(s^{r})$-resolution ODE \cref{eq:osr} exists uniquely and is given by 
	\begin{equation}\label{eq:fj}
		f_{0}(z)={}g_1(z),\quad 
		f_{j}(z) ={}\frac{g_{j+1}(z)}{(j+1)!}- \sum_{k=2}^{j+1}\frac{1}{k!}h_{k,j+1-k}(z) ,\quad 1\leq j\leq r,
	\end{equation}
	where $g_{j}(z)=\partial^{j}_sg(z,s)\Big|_{s=0}$ for $0\leq j\leq r+1$ and $h_{j,i}:\mathcal Z\to\mathcal Z$ is defined recursively by 
	\begin{equation}
		\label{eq:hij}
		\begin{aligned}
			h_{1,i}(z) = {}&f_i(z),\quad
			h_{k,i}(z)={}\sum_{l=0}^{i}\nabla h_{k-1,l}(z)f_{i-l}(z),\quad 2\leq k\leq r+1,\quad 0\leq i\leq r.
		\end{aligned}
	\end{equation}
\end{thm}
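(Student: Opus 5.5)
The plan is to compare the Taylor expansions in $s$ of the exact flow $Z(s)$ of \cref{eq:osr} and of the discrete map $g(z,s)$, and to match coefficients order by order up to $s^{r+1}$. On the discrete side, $g(z,0)=z$ and smoothness give
\[
g(z,s)=z+\sum_{j=1}^{r+1}\frac{s^{j}}{j!}g_j(z)+O(s^{r+2}).
\]
On the continuous side, write $F_s(z):=\sum_{i=0}^r s^if_i(z)$, so that $Z'=F_s(Z)$ with $Z(0)=z$. I will expand $Z(t)$ in $t$ and then set $t=s$. Here $s$ plays two roles: it is the evaluation time, and it is a parameter inside the vector field. Keeping these roles separate is the key bookkeeping point.

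The first step is to compute the time derivatives $Z^{(k)}(0)$. By the chain rule, $Z''=\nabla F_s(Z)F_s(Z)$, and in general $Z^{(k)}(0)=H_k(z)$, where $H_1=F_s$ and $H_k=\nabla H_{k-1}F_s$. I will show by induction on $k$ that each $H_k$ is a polynomial in $s$ whose coefficients are the $h_{k,i}$ of \cref{eq:hij}, that is, $H_k=\sum_{i\ge0}s^ih_{k,i}$. This follows directly from the induction: inserting $H_{k-1}=\sum_l s^l h_{k-1,l}$ and $F_s=\sum_m s^m f_m$ and collecting the powers $s^{l+m}=s^i$ gives exactly $h_{k,i}=\sum_{l=0}^i\nabla h_{k-1,l}f_{i-l}$. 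Substituting into Taylor's formula then gives
\[
Z(s)=z+\sum_{k=1}^{r+1}\frac{s^k}{k!}\sum_{i}s^ih_{k,i}(z)+O(s^{r+2}),
\]
where the remainder is uniform because the $f_i$ are smooth and $s$ is small.

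The second step is to collect the coefficient of $s^{j+1}$ for $0\le j\le r$. This coefficient is $\sum_{k=1}^{j+1}\frac1{k!}h_{k,j+1-k}$. The $k=1$ term is $h_{1,j}=f_j$. Requiring $\nm{Z(s)-g(z,s)}=o(s^{r+1})$ forces equality of these coefficients with $g_{j+1}/(j+1)!$ for every $j\le r$:
\[
f_j+\sum_{k=2}^{j+1}\frac1{k!}h_{k,j+1-k}=\frac{g_{j+1}}{(j+1)!}.
\]
For $j=0$ this gives $f_0=g_1$, and for $j\ge1$ it gives \cref{eq:fj}.

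This system is triangular. For $k\ge2$, each $h_{k,j+1-k}$ involves only $f_0,\dots,f_{j+1-k+1-1}$, hence only $f_i$ with $i\le j-1$, which I will also verify by the same induction. Existence therefore follows by defining the $f_j$ recursively through \cref{eq:fj}. Uniqueness follows because any ODE of the form \cref{eq:osr} with $\nm{Z(s)-z^+}=o(s^{r+1})$ must satisfy the same coefficient identities. This requires the two expansions to agree as polynomials in $s$ for every $z$, and that holds because the identity is valid for all small $s$.

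The main obstacle I expect is the mixed dependence on $s$. I must check that the Taylor remainder of $Z$ at time $s$ is genuinely $O(s^{r+2})$ uniformly when the field itself depends on $s$. I will handle this by bounding $Z^{(r+2)}$ on $[0,s]$ through the polynomial structure of $H_{r+2}$, using local boundedness of the derivatives of the $f_i$ near $z$. I also need to confirm that the truncation $i\le r$ in \cref{eq:hij} loses nothing at order $\le r+1$.
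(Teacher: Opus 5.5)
Your proposal is correct and is essentially the argument behind this result, which the paper imports from Lu's Theorem 1 without reproducing. That argument expands the flow through $Z^{(k)}(0)=H_k(z)$ with $H_k=\sum_i s^i h_{k,i}$, matches the coefficient of $s^{j+1}$ against $g_{j+1}/(j+1)!$, and solves the resulting triangular system, exactly as the paper's column-wise reading of the recursion \cref{eq:hij} and the $O(s^{r+2})$ local bound used in \cref{prop:0s-bound} reflect.
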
 

In fact, the $O(s^r)$-resolution ODE corresponds to the modified equation in the backward error analysis \cite{Hairer_2006_Geometric}, which provides a better continuous approximation to the DTA \cref{eq:z+g} of a local order $o(s^{r+1})$. In particular, if $g(z,s)$ is sufficiently smooth, then we have the following error estimates.
\begin{prop}\label{prop:0s-bound}
	Suppose $g$ is sufficiently smooth satisfying $g(z,0)=z$ for all $z\in\mathcal Z$. Let $\{z_k\}_{k=0}^N$ be generated by the DTA \cref{eq:z+g} with $z_0\in\mathcal Z$ and the $O(s^{r})$-resolution ODE be given by \cref{eq:osr}. If $s\leq s_0$ for some $s_0>0$, then there exist $\alpha_0:=(e^{s_0}-1)/s_0>0$ and $ C_0,C_1>0$ such that 
	\begin{itemize}
		\item  the local error bound $\nm{Z(s)-z_1} \leq C_1s^{r+2}+\left(1+\alpha_0C_0s\right)\nm{Z(0)-z_0}$, 
		\item the intermediate error bound $	\nm{Z(t_k)-z_k}
		\leq{}\frac{C_1}{\alpha_0C_0}e^{\alpha_0C_0T}s^{r+1}+e^{\alpha_0C_0T}\nm{Z(0)-z_0}$ for all $1\leq k\leq N$, where $T = Ns$.
	\end{itemize}
\end{prop}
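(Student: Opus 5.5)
We interpret the statement as follows. The sequence $\{z_k\}$ is generated by $z_{k+1}=g(z_k,s)$, and $Z(t)$ solves \cref{eq:osr} on $[0,T]$ with $t_k=ks$. The initial value $Z(0)$ is allowed to differ from $z_0$. The proof compares one step of the flow with one step of the map, splitting the error into a consistency part and a stability part.

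Let $\varphi_s(\zeta)$ denote the time-$s$ flow of \cref{eq:osr} started from $\zeta$. We write
\[
Z(s)-z_1=\bigl[\varphi_s(Z(0))-\varphi_s(z_0)\bigr]+\bigl[\varphi_s(z_0)-g(z_0,s)\bigr].
\]

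\textbf{Consistency term.} We Taylor expand $\varphi_s(z_0)$ in $s$. The vector field itself depends polynomially on $s$, so we use the iterated Lie-derivative expansion $\varphi_s(z)=z+\sum_{k\ge1}\frac{s^k}{k!}(\text{$k$-fold derivatives along } f_0+sf_1+\cdots)$. Regrouping by powers of $s$ gives exactly the terms $h_{k,i}$ of \cref{eq:hij}. We also expand $g(z_0,s)=z_0+\sum_{j\ge1}\frac{s^j}{j!}g_j(z_0)$, using $g(z,0)=z$. By the construction \cref{eq:fj} in \cref{thm:unique}, the coefficients of $s^1,\dots,s^{r+1}$ coincide. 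Since $g$ and $f_0,\dots,f_r$ are sufficiently smooth, Taylor's remainder in integral form bounds the difference by $C_1 s^{r+2}$ for $s\le s_0$. For this we need uniform bounds on the relevant derivatives over a compact set containing the trajectories, which we assume as part of the standing smoothness hypothesis (e.g. bounded derivatives, or restriction to a bounded invariant region).

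\textbf{Stability term.} For $s\le s_0$, let $C_0$ be a Lipschitz constant of the vector field $f_0+sf_1+\cdots+s^rf_r$. Gr\"onwall's inequality gives
\[
\nm{\varphi_s(a)-\varphi_s(b)}\le e^{C_0 s}\nm{a-b}.
\]
To reach the stated linear form, we use convexity of $x\mapsto e^{x}$ on $[0,C_0s_0]$: $e^{C_0s}-1\le \alpha s$ with $\alpha=(e^{C_0s_0}-1)/s_0$. The paper's constant $\alpha_0=(e^{s_0}-1)/s_0$ corresponds to normalizing so that $C_0s_0\le s_0$, and otherwise $C_0$ is redefined accordingly. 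This yields $1+\alpha_0C_0 s$ and proves the local bound.

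\textbf{Global bound.} Let $e_k=\nm{Z(t_k)-z_k}$. By autonomy of the ODE, $Z(t_{k+1})=\varphi_s(Z(t_k))$, so the local bound applied with starting points $Z(t_k)$ and $z_k$ gives
\[
e_{k+1}\le C_1 s^{r+2}+(1+\alpha_0C_0 s)e_k .
\]
Unrolling this discrete Gr\"onwall recursion gives
\[
e_k\le (1+\alpha_0C_0s)^k e_0+C_1s^{r+2}\frac{(1+\alpha_0C_0s)^k-1}{\alpha_0C_0 s}.
\]
Then $(1+\alpha_0C_0s)^k\le e^{\alpha_0C_0ks}\le e^{\alpha_0C_0T}$ yields the stated intermediate bound.

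\textbf{Main obstacle.} The delicate step is the consistency estimate. We must verify that the coefficients of the Lie-series expansion of the $s$-dependent flow match the recursion \cref{eq:hij}. This is essentially the uniqueness/existence computation of \cref{thm:unique}, which we invoke. We must also secure uniform remainder constants, which requires keeping the iterates and flow in a bounded region.
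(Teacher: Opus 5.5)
Your proposal is correct and follows the paper's route. You split $Z(s)-z_1$ through the flow started at $z_0$, bound that consistency part by $C_1s^{r+2}$ (the paper cites Lu's Remark~1 instead of redoing the Lie-series coefficient matching), control the stability part by Gr\"onwall, and unroll the one-step recursion. Your Gr\"onwall step, $e^{C_0s}\le 1+\frac{e^{C_0s_0}-1}{s_0}\,s$ followed by renaming $C_0$, is actually more careful than the paper's intermediate bound $1+C_0(e^t-1)$, which dominates $e^{C_0t}$ only when $C_0\le 1$.
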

\begin{proof} 
	Note that the right-hand side of \cref{eq:osr} determined by \cref{eq:fj}  is smooth enough. In particular, there exits some $C_0>0$ such that
	\begin{equation}\label{eq:Lip-fi}
		\nm{\sum_{i=0}^{r}s^if_i(\widetilde{Z}) -\sum_{i=0}^{r}s^if_i(Z)   }\leq C_0\|\widetilde{Z}-Z\|,
	\end{equation}
	for all $Z,\widetilde{Z}\in\mathcal Z$. Consider the following perturbed ODE 
	\[
	\widetilde{Z}'(t) = f_0(\widetilde{Z}(t)) + sf_1(\widetilde{Z}(t))+\cdots+s^rf_r(\widetilde{Z}(t)),\quad \widetilde{Z}(0)=z_0.
	\]
	Then by \cite[Remark 1]{lu_osr-resolution_2022}, we have $\|\widetilde{Z}(s)-z_1\| \leq C_1s^{r+2}$ for some $C_1>0$ independent of $s$. On the other hand, according to \cref{eq:Lip-fi}, it follows that
	\[
	\begin{aligned}
		\|\widetilde{Z}(t)-Z(t)\|={}&\nm{\widetilde{Z}(0)-Z(0)+\int_{0}^{t}\sum_{i=0}^{r}s^i\big[f_i(\widetilde{Z}(\tau)) -f_i(Z(\tau)) \big]\dd \tau}\\
		\leq{}&\|\widetilde{Z}(0)-Z(0)\|+C_0\int_{0}^{t}\|\widetilde{Z}(\tau)-Z(\tau)\|\dd \tau,
	\end{aligned}
	\]
	which, together with the Gronwall inequality, gives
	\[
	\|\widetilde{Z}(t)-Z(t)\|\leq \left(1+C_0(e^t-1)\right)\nm{z_0-Z(0)},\quad t>0.
	\]
	Consequently, we get the local error bound  immediately
	\[
	\begin{aligned}
		\nm{Z(s)-z_1}\leq \|Z(s)-\widetilde{Z}(s)\|+\|\widetilde{Z}(s)-z_1\|\leq{}& C_1s^{r+2}+\left(1+C_0(e^s-1)\right)\nm{Z(0)-z_0}\\
		\leq	{}& C_1s^{r+2}+\left(1+\alpha_0C_0s\right)\nm{Z(0)-z_0},
	\end{aligned}
	\]
	in view of the trivial inequality $e^s-1\leq \alpha_0 s$ for all $0<s\leq s_0$. A similar argument implies
	\[
	\nm{Z(t_k)-z_k}\leq  C_1s^{r+2}+\left(1+\alpha_0C_0s\right)\nm{Z(t_{k-1})-z_{k-1}},
	\]
	where $t_k = ks$ for all $1\leq k\leq N$. This also yields that
	\[
	\begin{aligned}
		\nm{Z(t_k)-z_k}\leq{}& C_1s^{r+1}\frac{\left(1+\alpha_0C_0s\right)^k-1}{\alpha_0C_0}+\left(1+\alpha_0C_0s\right)^k\nm{Z(0)-z_0}\\
		\leq{}&C_1s^{r+1}\frac{e^{\alpha_0C_0ks}}{\alpha_0C_0}+e^{\alpha_0C_0ks}\nm{Z(0)-z_0}
		\leq{}C_1s^{r+1}\frac{e^{\alpha_0C_0T}}{\alpha_0C_0}+e^{\alpha_0C_0T}\nm{Z(0)-z_0}.
	\end{aligned}
	\]
	This completes the proof.
\end{proof} 

Let us give more explanation about the calculation of the coefficient $f_j(z)$ for $1\leq j\leq r$. The recurrence relation \cref{eq:hij} can be written equivalently as the following matrix form
\[
\begin{aligned}
	\begin{bmatrix}
		h_{1,0}&	h_{1,1}&\cdots&	h_{1,r}\\
		h_{2,0}&h_{2,1}&\cdots&h_{2,r}\\
		\vdots&\vdots&\ddots&\vdots\\
		h_{r+1,0}&h_{r+1,1}&\cdots&h_{r+1,r}
	\end{bmatrix}
	={}&\begin{bmatrix}
		1&0&\cdots&0\\
		\nabla h_{1,0}&\nabla h_{1,1}&\cdots&\nabla h_{1,r}\\
		\nabla h_{2,0}&\nabla h_{2,1}&\cdots&\nabla h_{2,r}\\
		\vdots&\vdots&\ddots&\vdots\\
		\nabla h_{r,0}&\nabla h_{r,1}&\cdots&\nabla h_{r,r}
	\end{bmatrix} \begin{bmatrix}
		f_{0}&f_{1}&\cdots&f_{r}\\
		0&f_{0}&\cdots&f_{r-1}\\
		\vdots&\vdots&\ddots&\vdots\\
		0&0&\cdots&f_{0}
	\end{bmatrix}.
\end{aligned}
\]
Therefore, the computational flow for $h_{ij}$ is in column-wise: $f_j = h_{1,j}\to h_{2,j}\to\cdots\to h_{r+1,j}$ for $1\leq j\leq r$.
\subsection{Application to DTAs without momentum}
\label{sec:osr-gd}
In \cite[Section 2.1]{lu_osr-resolution_2022}, Lu has considered the application of the $O(s^r)$-resolution framework to three DTAs: 
\begin{itemize}
	\item gradient descent ascent (GDA): $z_{k+1} = z_k-sM(z_k)$,
	\item  proximal
	point method (PPM): $z_{k+1}=z_k-sM(z_{k+1})$,
	\item extra-gradient method (EGM): $z_{k+1} = z_k-sM(z_k-sM(z_k))$,
\end{itemize}
for solving the nonlinear
minimax problem $\min_{x\in\R^n}\max_{y\in\R^m}\,L(x,y)$, where $M(z):=[\nabla_xL(x,y),-\nabla_yL(x,y)]$ for all $z=(x,y)\in\R^n\times\R^m$. More precisely, the $O(s)$-resolution ODE of GDA is
\begin{equation}\label{eq:osr-gda}
	Z'=-M(Z)-\frac{s}{2}\nabla M(Z)M(Z),
\end{equation}
and the $O(s)$-resolution ODEs of PPM and EGM are the same one:
\begin{equation}\label{eq:osr-ppm}
	Z'=-M(Z)+\frac{s}{2}\nabla M(Z)M(Z).
\end{equation}

As mentioned at the end of \cite[Section 2.1]{lu_osr-resolution_2022}, it can be applied directly to many other first-order methods without momentum. For completeness and later use (cf.\cref{sec:ode to dta-pdhg}),  we provide a detailed investigations on the $O(s^r)$-resolution ODEs of more typical examples.
\subsubsection{Mirror descent}
\label{sec:osr-md}
As a generalization of the gradient descent to the non-Euclidean setting, the mirror descent (MD) reads as follows 
\begin{equation}\label{eq:md-xk}
	x_{k+1} = \mathop{\argmin}_{x\in\R^n} \left\{ s \langle \nabla F(x_k), x - x_k \rangle + D_\varphi(x, x_k) \right\},
\end{equation}
where $F\in\mathcal F^1(\R^n),\,s>0$ denotes the step size and  $D_\varphi(x,y):=\varphi(x)-\varphi(y)-\dual{\varphi(y),x-y}$ represents the Bregman divergence with respect to a given prox-function $\varphi\in \mathcal S_1^1(\R^n)$. 

Introduce the dual variable $z_k = \nabla\varphi(x_k)$ and rewrite  \cref{eq:md-xk} as a dual formulation
\begin{equation}\label{eq:md-zk}
	z_{k+1} = z_k -  s \nabla F(\nabla \varphi^*(z_{k})),
\end{equation}
which is a 
standard DTA $z^+=g(z,s)=z-s\nabla F(\nabla \varphi^*(z))$. Notice that $g(z,0) = z$ and
\[
g_0(z)=g(z,0)=z,\,g_1(z) =\partial_sg(z,0)= -\nabla F(\nabla\varphi^*(z)),\,g_j(z)=\partial^j_sg(z,0)=0,\,j\geq 2.
\]
According to \cref{thm:unique}, it follows that $	f_0(z) = g_1(z) =  -\nabla F(\nabla\varphi^*(z))$ and 
\[
\begin{aligned}
	f_1(z) = {}&g_2(z)-\frac{1}{2}h_{2,0}(z)=-\frac{1}{2}\nabla f_{0}(z)f_{0}(z)\\
	={}&-\frac{1}{2} \nabla^2 F(\nabla \varphi^*(z))\nabla^2\varphi^*(z)\nabla F(\nabla \varphi^*(z)).
\end{aligned}
\]
This leads to the $O(s)$-resolution ODE of the dual MD \cref{eq:md-zk}.
\begin{thm}\label{thm:osr-md-zk}
	Assume that $\varphi^*\in C^2(\R^n)$ and $F\in \mathcal F^2(\R^n)$, then the $O(s)$-resolution ODE of the dual MD \cref{eq:md-zk} is given by
	\[
	Z'=-\nabla F(\nabla \varphi^*(Z))-\frac{s}{2} \nabla^2 F(\nabla \varphi^*(Z))\nabla^2\varphi^*(Z)\nabla F(\nabla \varphi^*(Z)),
	\]
	with $Z(0)=z_0=\nabla \varphi(x_0)$.
\end{thm}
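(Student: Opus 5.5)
The plan is to apply \cref{thm:unique} with $r=1$ to the DTA $z^+=g(z,s):=z-s\nabla F(\nabla\varphi^*(z))$, which is exactly the dual form \cref{eq:md-zk} of the mirror descent \cref{eq:md-xk}. First, we verify the hypotheses. Clearly $g(z,0)=z$ for every $z\in\R^n$, so the fixed-point assumption \cref{eq:fp-assum} holds. Since $F\in\mathcal F^2(\R^n)$ and $\varphi^*\in C^2(\R^n)$, the map $z\mapsto\nabla F(\nabla\varphi^*(z))$ is $C^1$. Moreover, $g$ is affine in $s$, so it is smooth in $s$. These are the regularity requirements needed to differentiate $f_0$ once, which is all the $r=1$ formulas in \cref{eq:fj,eq:hij} require.

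Next, we compute the $s$-derivatives at $s=0$: $g_1(z)=-\nabla F(\nabla\varphi^*(z))$ and $g_j\equiv 0$ for $j\ge 2$. By \cref{eq:fj}, $f_0=g_1$ and
\[
f_1=\tfrac12 g_2-\tfrac12 h_{2,0}=-\tfrac12\nabla f_0\, f_0 .
\]
Here \cref{eq:hij} gives $h_{2,0}=\nabla h_{1,0}f_0=\nabla f_0 f_0$. The chain rule then gives
\[
\nabla f_0(z)=-\nabla^2F(\nabla\varphi^*(z))\,\nabla^2\varphi^*(z).
\]
Substituting this into $f_1$ yields exactly the $O(s)$ term stated in \cref{thm:osr-md-zk}. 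Finally, since the DTA starts from $z_0=\nabla\varphi(x_0)$, the initial condition is $Z(0)=z_0$, following \cref{def:osr}. The uniqueness part of \cref{thm:unique} then identifies the resulting ODE as \emph{the} $O(s)$-resolution ODE.

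There is no real obstacle, only a regularity subtlety. \cref{thm:unique} asks for $g$ to be ``sufficiently smooth'', while the local-error statement $\|Z(s)-z^+\|=o(s^2)$ underlying it (via Lu's Taylor matching) formally uses one more derivative of $f_0$. If needed, one checks the $o(s^2)$ matching directly instead. The Taylor expansion $Z(s)=z+sf_0+\tfrac{s^2}{2}\bigl(2f_1+\nabla f_0 f_0\bigr)+o(s^2)$ holds because the right-hand side of the ODE is $C^1$, which makes $Z\in C^2$. With $2f_1=-\nabla f_0f_0$ the $s^2$ coefficient vanishes, so $Z(s)=z+sf_0+o(s^2)=g(z,s)+o(s^2)$. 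This confirms the claim under only the stated $C^2$ hypotheses.
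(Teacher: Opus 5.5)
Your argument is the paper's: apply \cref{thm:unique} with $r=1$, read off $g_1(z)=-\nabla F(\nabla\varphi^*(z))$ and $g_2=0$ so that $f_1=-\tfrac12\nabla f_0\,f_0$, and finish with the chain rule. Your coefficient $\tfrac12 g_2$ is the one prescribed by \cref{eq:fj}; the paper writes $g_2$, which is harmless since $g_2=0$.

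There is one slip, in your optional regularity remark. $f_1$ contains $\nabla^2F$ and $\nabla^2\varphi^*$, so under the stated $C^2$ hypotheses the right-hand side $f_0+sf_1$ is merely continuous, not $C^1$, and $Z$ need not be $C^2$. The $o(s^2)$ matching is still true, but it should be justified from $Z(s)=z+\int_0^s\bigl[f_0(Z(\tau))+sf_1(Z(\tau))\bigr]\dd\tau$. Use differentiability of $f_0$ at $z$ for the first integrand. Use only continuity of $f_1$ for the second, which then contributes $s^2f_1(z)+o(s^2)$. The remainders are uniform in $s$ because $\nm{Z(\tau)-z}\le C\tau$.
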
 

\subsubsection{Primal-dual hybrid gradient}
We now focus on the primal-dual method
\begin{equation}\label{eq:pdhg}
	\left\{
	\begin{aligned}
		&x_{k+1} =\mathop{\argmin}_{x\in\R^n}\left\{\mathcal L(x,y_k)+\frac{1}{2s}\nm{x-x_k}^2\right\}, \\
		&y_{k+1} =\mathop{\argmax}_{y\in\R^m}\left\{\mathcal L(x_{k+1}+\theta(x_{k+1}-x_k),y)-\frac{1}{2s}\nm{y-y_k}^2\right\},
	\end{aligned}
	\right.
\end{equation}
for solving the bilinear saddle-point problem
\begin{equation}\label{eq:bisp}
	\min_{x\in \R^n} \max_{y\in \mathbb{R}^m }\,\mathcal L(x,y) := F(x) + \dual{y,Ax}-G(y),
\end{equation}
where $F\in\mathcal F^1(\R^n)$ and $G\in\mathcal F^1(\R^m)$ and $A:\mathbb{R}^n \to \mathbb{R}^m$  is a linear operator. The step size is $s>0$ and the extrapolation parameter $\theta\in[0,1]$. The case $\theta=0$ corresponds to the primal-dual hybrid gradient (PDHG) method by Esser et al. \cite{esser_general_2010} and the other case $\theta=1$ is the method of Chambolle and Pock (CP) \cite{chambolle_first-order_2011}.

For simplicity, introduce a monotone operator $M:\mathcal{Z} \to  \mathcal{Z} $ by that
\[
M(z) := \begin{pmatrix} 
	\nabla F(x) + A^\top y \\ 
	\nabla G(y) - Ax 
\end{pmatrix}, \quad \forall\, z=(x,y) \in \mathcal{Z} := \R^n \times \R^m .
\]
Then we obtain a more compact preconditioned PPA presentation of PDHG \cref{eq:pdhg}:
\begin{equation}
	\label{eq:ppa-pdhg}
	z_{k+1}=z_k-s(I+sQI_\theta)^{-1}M(z_{k+1}),\quad  
		Q:= \begin{bmatrix}
			O&A^\top \\-A&O
		\end{bmatrix},
	\end{equation}
	where $I_\theta=\diag{\theta I,-I}$. For simplicity, let $Q_\theta:=QI_\theta$. This leads to $z^+=g(z,s)=\left[I+s\left(I+sQ_\theta\right)^{-1}M\right]^{-1}(z)$ satisfying $g(z,0) = z$ for all $z\in\mathcal Z$. A useful expansion lemma is given below, which implies immediately the $O(s)$-resolution ODE of \cref{eq:pdhg}.
	\begin{lem}
		\label{lem:expan-pdhg}
		We have
		\begin{equation}\label{eq:expan-pdhg}
			\left[I+s\left(I+sQ_\theta\right)^{-1}M\right]^{-1}(z)=z-sM(z)+s^2	\left[\nabla M(z)+Q_\theta\right]M(z)+o(s^2).
		\end{equation}
	\end{lem}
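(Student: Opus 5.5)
We write $w:=g(z,s)=\left[I+s\left(I+sQ_\theta\right)^{-1}M\right]^{-1}(z)$ and prove \cref{eq:expan-pdhg} by a fixed-point (bootstrapping) expansion of the implicit relation rather than by differentiating the inverse operator directly. By definition $w$ solves
\[
w+s\left(I+sQ_\theta\right)^{-1}M(w)=z,
\]
which is equivalent to \cref{eq:ppa-pdhg} with $z_k=z$, $z_{k+1}=w$. We assume $M$ is $C^1$ (i.e.\ $F,G\in C^2$). For $s$ small, the implicit function theorem applied to $\Psi(w,s):=w+s(I+sQ_\theta)^{-1}M(w)-z$ at $(w,s)=(z,0)$ applies, since $\partial_w\Psi(z,0)=I$. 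It gives a unique $C^1$ (in $s$) solution $w(s)$ near $z$ with $w(0)=z$. In particular $w-z=O(s)$. If $M$ is merely monotone and Lipschitz, one can alternatively use that $I+s(I+sQ_\theta)^{-1}M$ is a perturbation of the identity by an $O(s)$-Lipschitz map. The contraction principle then gives the same existence and $\nm{w-z}\le Cs\nm{M(z)}$.

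Next we expand the preconditioner by the Neumann series: $(I+sQ_\theta)^{-1}=I-sQ_\theta+O(s^2)$ in operator norm for $s\nm{Q_\theta}<1$. Substituting gives
\[
w=z-sM(w)+s^2Q_\theta M(w)+O(s^3)\nm{M(w)}.
\]
From the first-order bound $w-z=O(s)$ and the smoothness of $M$, we get $M(w)=M(z)+O(s)$. Plugging this back yields $w=z-sM(z)+O(s^2)$, the first-order term.

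For the second order, we Taylor expand $M(w)=M(z)+\nabla M(z)(w-z)+o(\nm{w-z})$. Using $w-z=-sM(z)+O(s^2)$ gives $M(w)=M(z)-s\nabla M(z)M(z)+o(s)$. Inserting this into the relation from the previous paragraph, we obtain
\[
w=z-sM(z)+s^2\nabla M(z)M(z)+s^2Q_\theta M(z)+o(s^2).
\]
This is exactly \cref{eq:expan-pdhg}.

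The only delicate point is justifying the a priori bound $w-z=O(s)$ uniformly, i.e.\ the well-posedness of the implicit step. This is handled by the implicit function theorem as above, or by the contraction argument. The rest is routine two-step bootstrapping. As a sanity check, one can equivalently read off $g_1=-M$ and $g_2=2[\nabla M+Q_\theta]M$ by differentiating $\Psi(w(s),s)=0$ twice at $s=0$.
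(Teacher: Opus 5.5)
Your argument is correct and gives the same coefficients, but its logical structure differs from the paper's.

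The paper posits that the map has an expansion $\phi_0(z)+s\phi_1(z)+s^2\phi_2(z)+o(s^2)$. It substitutes this into $[I+s(I+sQ_\theta)^{-1}M](\cdot)=z$, uses $(I+sQ_\theta)^{-1}=I-sQ_\theta+O(s^2)$, and reads off $\phi_0,\phi_1,\phi_2$ by matching powers of $s$. You instead work with the fixed-point form $w=z-s(I+sQ_\theta)^{-1}M(w)$. You first secure the a priori bound $w-z=O(s)$ by the implicit function theorem or a contraction argument, and then bootstrap twice.

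The algebra is the same in both: a Neumann series for the preconditioner and a first-order Taylor expansion of $M$ at $z$. What your version adds is a proof that an expansion of this form actually exists with a genuine $o(s^2)$ remainder, which the paper's ansatz takes for granted. It also makes clear where the regularity of $M$ is used. The paper's coefficient matching is formal, but it is more mechanical and carries over directly to higher orders once one accepts that $s\mapsto g(z,s)$ is smooth.

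Your sanity check $g_2=2[\nabla M+Q_\theta]M$ is the correct second derivative in the sense of \cref{thm:unique}. The paper's proof of \cref{thm:pdhg_cp_ode} records $g_2=[\nabla M+Q_\theta]M$ without the factor $2$. It compensates by writing $f_1=g_2-\tfrac12 h_{2,0}$ instead of $\tfrac12 g_2-\tfrac12 h_{2,0}$, so \cref{eq:osr-pdhg} is unaffected.
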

	\begin{proof}
		Notice that
		\[
		\left(I+sQ_\theta\right)^{-1}(z) = z-sQ_\theta z+s^2Q_\theta^2z+o(s^{2}).
		\]
		Assume $	\left[I+s\left(I+sQ_\theta\right)^{-1}M\right]^{-1}(z)=\phi_0(z)+s\phi_1(z)+s^2\phi_2(z)+o(s^2)$, which gives
		\[
		\begin{aligned}
			z = {}&\left[I+s\left(I+sQ_\theta\right)^{-1}M\right]\left(\phi_0(z)+s\phi_1(z)+s^2\phi_2(z)+o(s^2) \right)\\
			={}&\phi_0(z)+s\phi_1(z)+s^2\phi_2(z)+o(s^2)\\
			{}&\quad +s\left[I-sQ_\theta+o(s)\right]M\left(\phi_0(z)+s\phi_1(z)+s^2\phi_2(z)+o(s^2) \right)\\
			={}&\phi_0(z)+s\phi_1(z)+s^2\phi_2(z)+o(s^2)\\
			{}&\quad +s\left[M(\phi_0(z))+s\nabla M(\phi_0(z))\phi_1(z)-sQ_\theta M(\phi_0(z))+o(s)\right] \\		
			={}&\phi_0(z)+s\left[\phi_1(z)+M(\phi_0(z))\right]+o(s^2)\\
			{}&\quad+s^2\left[\phi_2(z)+\nabla M(\phi_0(z))\phi_1(z)-Q_\theta M(\phi_0(z))\right].
		\end{aligned}
		\]
		This leads to $\phi_0(z)=z,\,\phi_1(z)=-M(\phi_0(z))=-M(z)$ and
		\[
		\phi_2(z)=-\nabla M(\phi_0(z))\phi_1(z)+Q_\theta  M(\phi_0(z))=
		\nabla M(z)M(z)+Q_\theta M(z),
		\]
		which verifies \cref{eq:expan-pdhg} and completes the proof.
	\end{proof}
	\begin{thm}\label{thm:pdhg_cp_ode}
		Assume $F\in\mathcal F^2(\R^n)$ and $G\in\mathcal F^2(\R^m)$. The $O(s)$-resolution of PDHG \cref{eq:pdhg} is given by
		\begin{equation}\label{eq:osr-pdhg}
			Z'=-M(Z)+\frac{s}{2}\left[ \nabla M(Z)+2Q_\theta\right]M(Z),
		\end{equation}
		with $Z(0)=z_0=(x_0,y_0)\in\mathcal Z$.
	\end{thm}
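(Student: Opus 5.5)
The plan is to apply \cref{thm:unique} with $r=1$ to the DTA $z^+=g(z,s)=\left[I+s\left(I+sQ_\theta\right)^{-1}M\right]^{-1}(z)$. First I will check that this map fits the theorem. We have $g(z,0)=z$, as already noted. Under $F\in\mathcal F^2$ and $G\in\mathcal F^2$, the operator $M$ is $C^1$ and monotone. Hence, for small $s$, $g$ is well defined: the inverse of $I+s(I+sQ_\theta)^{-1}M$ exists by the implicit function theorem near $s=0$. It is also smooth enough in $(z,s)$ for the Taylor coefficients $g_1,g_2$ to exist. These coefficients are exactly what \cref{lem:expan-pdhg} provides.

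Reading off the expansion \cref{eq:expan-pdhg}, I get $g_0(z)=z$ and $g_1(z)=\partial_s g(z,0)=-M(z)$. For the second coefficient, matching $s^2$ terms gives $\tfrac12 g_2(z)=\left[\nabla M(z)+Q_\theta\right]M(z)$, that is, $g_2(z)=2\left[\nabla M(z)+Q_\theta\right]M(z)$. By \cref{eq:fj}, $f_0=g_1=-M$ and
\[
f_1(z)=\frac{g_2(z)}{2}-\frac12 h_{2,0}(z),\qquad h_{2,0}(z)=\nabla h_{1,0}(z)f_0(z)=\nabla f_0(z)f_0(z)=\nabla M(z)M(z).
\]
Therefore
\[
f_1(z)=\left[\nabla M(z)+Q_\theta\right]M(z)-\tfrac12\nabla M(z)M(z)=\tfrac12\left[\nabla M(z)+2Q_\theta\right]M(z),
\]
and the ODE $Z'=f_0(Z)+sf_1(Z)$ is exactly \cref{eq:osr-pdhg}, with $Z(0)=z_0$.

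The only delicate point is the bookkeeping in relating the expansion to the derivatives $g_j$. The $s^2$ coefficient in \cref{lem:expan-pdhg} equals $g_2/2!$, not $g_2$, so I will make sure the factor $1/(j+1)!$ in \cref{eq:fj} is applied consistently. As a sanity check, setting $Q_\theta=0$ (no coupling preconditioner) should recover the PPM result \cref{eq:osr-ppm}, namely $Z'=-M+\tfrac s2\nabla M\,M$, and the formula above does so.

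A secondary point is that the $o(s^2)$ remainder in \cref{lem:expan-pdhg} must actually come from a $C^2$ (in $s$) map, so that $g_2=\partial_s^2 g(z,0)$ is legitimate. I will justify this by applying the implicit function theorem to $\Psi(w,z,s)=w+s(I+sQ_\theta)^{-1}M(w)-z=0$, whose $w$-Jacobian at $s=0$ is the identity. Since $M\in C^1$, the solution $w=g(z,s)$ is $C^1$ jointly. Its $s$-dependence is smoother, because $\Psi$ depends on $s$ polynomially after clearing the factor $(I+sQ_\theta)$. With the sufficiently smooth assumption of \cref{thm:unique}, this gives the needed regularity.
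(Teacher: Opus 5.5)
Your proposal is correct and follows the paper's proof: you read off $g_1=-M$ and the $s^2$-coefficient $[\nabla M+Q_\theta]M$ from \cref{lem:expan-pdhg}, then apply \cref{thm:unique} with $h_{2,0}=\nabla M\,M$ to obtain $f_1=\tfrac12[\nabla M+2Q_\theta]M$. Your handling of the factorial is cleaner than the paper's, which calls the $s^2$-coefficient itself $g_2$ and compensates by writing $f_1=g_2-\tfrac12 h_{2,0}$ rather than $\tfrac12 g_2-\tfrac12 h_{2,0}$; both conventions give the same result.
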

	\begin{proof}
		Thanks to \cref{lem:expan-pdhg}, we have  
		\[
		g_0(z) = z,\,g_1(z) = -M(z),\,g_2(z)=	\left[\nabla M(z)+Q_\theta\right]M(z).
		\]
		Then invoking \cref{thm:unique}, it follows that $	f_0(z) = g_1(z) =  -M(z)$ and 
		\[
		\begin{aligned}
			f_1(z) = {}&g_2(z)-\frac{1}{2}h_{2,0}(z)=g_2(z)-\frac{1}{2}\nabla f_{0}(z)f_{0}(z)=\frac{1}{2}\left[ \nabla M(z)+2Q_\theta\right]M(z).
		\end{aligned}
		\]
		This gives \cref{eq:osr-pdhg} and finishes the proof. 
	\end{proof}
	
	From Theorem \ref{thm:pdhg_cp_ode}, the $O(1)$-resolution ODEs of PDHG $(\theta=0)$ and CP $(\theta=1)$ are the same one 
	\begin{equation}\label{eq:pdhg-o1}
		Z' = -M(Z).
	\end{equation}
	However, the convergence behaviors of these two methods are totally different; see \cref{fig:pdhg_cp} for a simple two dimensional illustration. The $O(1)$-resolution exhibits a limit cycle, which coincides with the non-convergence of PDHG (cf.\cite{he_convergence_2014}) but violates the convergence of CP. 
	\begin{figure}[H]
		\centering  
		{
			\begin{minipage}[t]{0.35\textwidth}
				\centering          
				\includegraphics[width=0.9\textwidth]{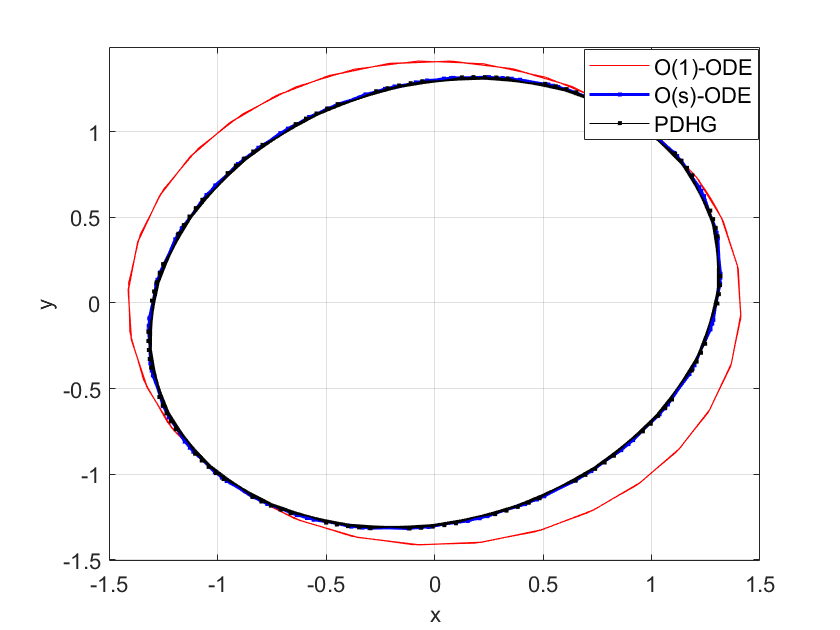}   
			\end{minipage}%
			\label{fig:pdhg_ode}
		}\centering  
		{
			\begin{minipage}[t]{0.35\textwidth}
				\centering          
				\includegraphics[width=0.9\textwidth]{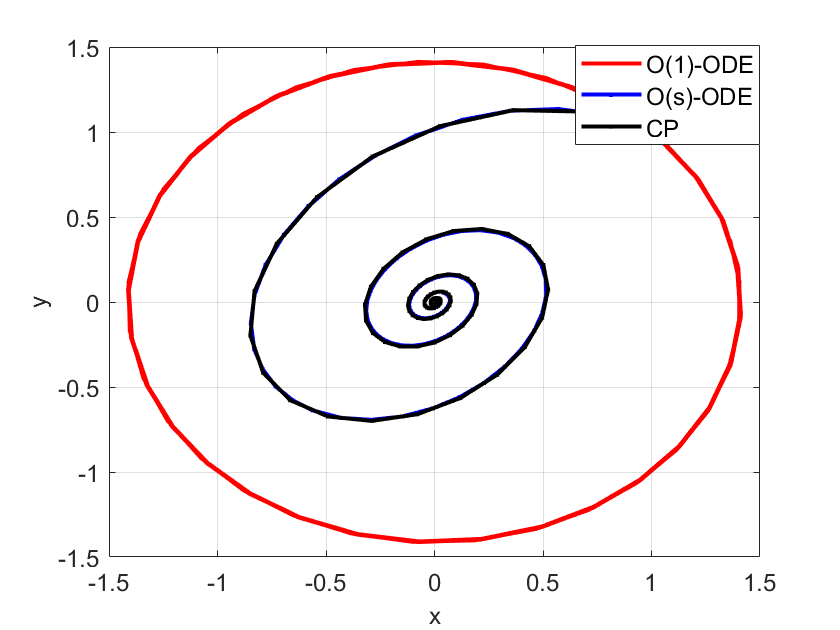}   
			\end{minipage}%
			\label{fig:cp_ode} 
		} 
		\caption{Illustration of PDHG and CP and  their corresponding resolution ODEs with the step size $s=0.3$. The saddle point function is  $L(x,y)=xy$ for $x,\,y\in \R$  and the initial state is $(x_0,y_0)=(1, 1)$.}
		\label{fig:pdhg_cp}
	\end{figure}
	\begin{table}[H]
		\centering
		\setlength{\tabcolsep}{4.5pt}
		\renewcommand{\arraystretch}{0.8}	
		\caption{Convergence rates of the $O(1)$-resolution ODEs of PDHG and CP }
		\label{tab:pdhg_comparison}
		\begin{tabular}{c c S[table-format=1.2e-2] S[table-format=1.2] S[table-format=1.2e-2] S[table-format=1.2] S[table-format=1.2e-2] S[table-format=1.2]}
			\toprule
			{Algor. / $O(1)$} & {$s$} & {$E_1(s)$} & {Rate} & {$E_2(s)$} & {Rate} & {$E_3(s)$} & {Rate} \\
			\midrule
			\multirow{5}{*}{PDHG / \text{\cref{eq:pdhg-o1}}}
			& $1/2^{5}$ & 9.39e-4 & {--} & 5.16e-1 & {--} & 6.72e-3 & {--} \\
			& $1/2^{6}$ & 2.66e-4 & 1.82 & 5.15e-1 & 0.01 & 3.35e-3 & 1.01 \\
			& $1/2^{7}$ & 7.12e-5 & 1.90 & 5.14e-1 & 0.00 & 1.67e-3 & 1.00 \\
			& $1/2^{8}$ & 1.84e-5 & 1.95 & 5.13e-1 & 0.00 & 8.33e-4 & 1.00 \\
			\midrule
			\multirow{5}{*}{CP / \text{\cref{eq:pdhg-o1}}}
			& $1/2^{5}$ & 1.22e-3 & {--} & 1.69e-1 & {--} & 3.59e-3 & {--}\\
			& $1/2^{6}$ & 3.44e-4 & 1.83 & 1.74e-1 & -0.04 & 1.84e-3 & 0.96 \\
			& $1/2^{7}$ & 9.16e-5 & 1.91 & 1.77e-1 & -0.02 & 9.35e-4 & 0.98 \\
			& $1/2^{8}$ & 2.36e-5 & 1.95 & 1.78e-1 & -0.01 & 4.71e-4 & 0.99 \\
			\bottomrule
		\end{tabular}
	\end{table}
	\begin{table}[H]
		\centering
		\setlength{\tabcolsep}{4.5pt}
		\renewcommand{\arraystretch}{0.85}	
		\caption{Convergence rates of the $O(s)$-resolution ODEs of PDHG and CP }
		\label{tab:pdhg_osarison_poly}
		\begin{tabular}{c c S[table-format=1.2e-2] S[table-format=1.2] S[table-format=1.2e-2] S[table-format=1.2] S[table-format=1.2e-2] S[table-format=1.2]}
			\toprule
			{Algor. / $O(s)$} & {$s$} & {$E_1(s)$} & {Rate} & {$E_2(s)$} & {Rate} & {$E_3(s)$} & {Rate} \\
			\midrule
			\multirow{5}{*}{PDHG / \text{\cref{eq:osr-pdhg}}}
			& $1/2^{5}$ & 8.73e-5 & {--} & 1.73e-2 & {--} & 2.58e-4 & {--} \\
			& $1/2^{6}$ & 1.26e-5 & 2.79 & 8.50e-3 & 1.02 & 6.43e-5 & 2.00 \\
			& $1/2^{7}$ & 1.70e-6 & 2.89 & 4.22e-3 & 1.01 & 1.61e-5 & 2.00 \\
			& $1/2^{8}$ & 2.21e-7 & 2.94 & 2.10e-3 & 1.01 & 4.02e-6 & 2.00 \\
			\midrule
			\multirow{5}{*}{CP / \text{\cref{eq:osr-pdhg}}}
			& $1/2^{5}$ & 1.23e-4 & {--} & 3.12e-2 & {--} & 5.22e-4 & {--} \\
			& $1/2^{6}$ & 1.76e-5 & 2.81 & 1.55e-2 & 1.01 & 1.30e-4 & 2.00 \\
			& $1/2^{7}$ & 2.35e-6 & 2.90 & 7.70e-3 & 1.01 & 3.25e-5 & 2.00 \\
			& $1/2^{8}$ & 3.05e-7 & 2.95 & 3.84e-3 & 1.00 & 8.14e-6 & 2.00 \\
			\bottomrule
		\end{tabular}
	\end{table}
	
	On the other hand, by \cref{eq:osr-pdhg}, the $O(s)$-resolution ODEs differ from the linear operator $Q_\theta$. For PDHG, it is asymmetric while for CP it is symmetric. This subtle difference leads to dramatically distinct behavior; see \cref{fig:pdhg_cp}. The $O(s)$-resolution ODE of PDHG approximates the discrete trajectory very well and performs still as a circle. Compared with this, the $O(s)$-resolution ODE of CP is also more close to its discrete trajectory and converges to the saddle point.

	Furthermore, for another simple example $L(x,y)=x^4+xy-y^2$, we verify the order of the convergence rate of the $O(s^{r})$-resolution ODEs ($r=0,1$) regarding to three measurements:
	\[
	E_1(s):=\nm{Z(s)-z_1},\quad E_2(s):=\sum_{k=1}^{N}\nm{Z(t_k)-z_k},\quad 
	E_3(s):=\sqrt{\sum_{k=1}^{N}s\nm{Z(t_k)-z_k}^2},
	\]
	where $t_k = ks$ for $1\leq k\leq N=T/s$ with $T=20$. According to \cref{prop:0s-bound}, when the initial values of the DTA and the corresponding $O(s)$-resolution ODE are identical, then we have $E_1(s)=O(s^{r+2}),E_2(s)=O(s^{r})$ and $E_3(s)=O(s^{r+1})$, which are verified by the numerical results in \cref{tab:pdhg_comparison,tab:pdhg_osarison_poly}.  

\section{The $O(\left(\sqrt{s}\right)^{r})$-Resolution Framework for Accelerated Methods}\label{sec:Os-am}
For general cases without the fixed-point assumption \cref{eq:fp-assum}, which correspond to acceleration methods with momentum, we define the $O((\sqrt{s})^r)$-resolution ODE by using proper equivalent template with step size $\sqrt{s}$. In this section, we derive high-resolution ODEs for a class of accelerated first-order methods. By leveraging the transformation technique, we extend the \(O(s^r)\)-resolution framework to accelerated methods with momentum and variable parameters, including HB, NAG and AMD. 
\subsection{The high-resolution ODE}
Let us extend \cref{def:osr} to DTAs without the fixed-point assumption $g(z,0)=z$.
\begin{define}	
	\label{def:osr-agd}
	For a given DTA \cref{eq:z+g}, if there exists an equivalent template
	\begin{equation}\label{eq:X+Gamma}
		X^+ = \Phi(X,\sqrt{s}),
	\end{equation}
	where $\Phi:\mathcal X\times\R_+\to\mathcal X$ satisfies $\Phi(X,0)=X$ for all $X\in\mathcal X$, and an ODE system with the following format
	\begin{equation}\label{eq:osr-agd}
		X'(t) = \Gamma_0(X(t)) + \sqrt{s}\Gamma_1(X(t))+\cdots+(\sqrt{s})^r\Gamma_r(X(t)),\quad 
		X(0)=X,
	\end{equation}
	that satisfies $\nm{X(\sqrt{s})-X^+} = o\left((\sqrt{s})^{r+1}\right)$ with $r\geq 0$, then we call \cref{eq:osr-agd} the $O((\sqrt{s})^r)$-resolution ODE of the DTA \cref{eq:z+g} with respect to the equivalent template \cref{eq:osr-agd}.
\end{define}
Thanks to \cref{thm:unique}, the $O((\sqrt{s})^r)$-resolution ODE of the DTA \cref{eq:z+g} with respect to the equivalent template \cref{eq:osr-agd} exists uniquely. 
\begin{thm} 
	\label{thm:unique-osr-agd}
	Given a DTA \cref{eq:z+g}, if there exists an equivalent template \cref{eq:X+Gamma} with  a  sufficiently smooth mapping $\Phi:\mathcal X\times\R_+\to\mathcal X$ satisfying $\Phi(X,0)=X$, then the $O((\sqrt{s})^r)$-resolution ODE of the DTA \cref{eq:z+g} with respect to the equivalent template \cref{eq:X+Gamma} exists uniquely and is given by 
	\begin{equation*}
		\begin{aligned}
			\Gamma_{0}(X)={}&\Phi_1(X),\quad 
			\Gamma_{j}(X) ={}\frac{\Phi_{j+1}(X)}{(j+1)!}- \sum_{k=2}^{j+1}\frac{1}{k!}H_{k,j+1-k}(X) ,\quad 1\leq j\leq r,
		\end{aligned}
	\end{equation*}
	where $\Phi_{j}(X)=\partial^{j}_\tau\Phi(X,\tau)\Big|_{\tau=0}$ for $0\leq j\leq r+1$ and $H_{j,i}:\mathcal X\to\mathcal X$ is defined recursively by 
	\[
	H_{1,i}(X) = {}\Gamma_i(X),\,
	H_{k,i}(X)={}\sum_{l=0}^{i}\nabla H_{k-1,l}(X)\Gamma_{i-l}(X),\quad 2\leq k\leq r+1,\,0\leq i\leq r.
	\]
\end{thm}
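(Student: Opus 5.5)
The plan is to reduce the statement directly to \cref{thm:unique} by a change of variables in the step-size parameter. Set $h:=\sqrt{s}$ and regard the equivalent template \cref{eq:X+Gamma} as an ordinary DTA $X^+=\Phi(X,h)$ on the Hilbert space $\mathcal X$ with step size $h$. By hypothesis $\Phi$ is sufficiently smooth and $\Phi(X,0)=X$ for all $X\in\mathcal X$, so this DTA satisfies exactly the fixed-point assumption \cref{eq:fp-assum} with $g$ replaced by $\Phi$, $z$ by $X$, and $s$ by $h$.

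Next I would check that the definitions match under this substitution. \cref{def:osr-agd} requires an ODE
\[
X'(t)=\Gamma_0(X(t))+h\Gamma_1(X(t))+\cdots+h^r\Gamma_r(X(t)),\quad X(0)=X,
\]
with $\nm{X(h)-X^+}=o(h^{r+1})$. Written in the variable $h$, this is precisely \cref{def:osr} for the DTA $X^+=\Phi(X,h)$: the time at which the ODE is compared is the step size $h$, the expansion is in powers of $h$, and the error order is $o(h^{r+1})$. Hence an ODE is an $O((\sqrt{s})^r)$-resolution ODE of \cref{eq:z+g} with respect to \cref{eq:X+Gamma} if and only if it is the $O(h^r)$-resolution ODE of $X^+=\Phi(X,h)$ in the sense of \cref{def:osr}.

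Applying \cref{thm:unique} to this DTA then gives existence and uniqueness at once, together with the formulas $\Gamma_0=\Phi_1$ and $\Gamma_j=\Phi_{j+1}/(j+1)!-\sum_{k=2}^{j+1}H_{k,j+1-k}/k!$. Here $\Phi_j=\partial_h^j\Phi(X,h)|_{h=0}$, which is the stated $\partial_\tau^j\Phi(X,\tau)|_{\tau=0}$, and the $H_{k,i}$ are exactly the $h_{k,i}$ of \cref{eq:hij} with $f_i$ replaced by $\Gamma_i$.

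The only point that needs care is bookkeeping rather than analysis. The derivatives must be taken with respect to the new parameter $\tau=\sqrt{s}$ and not $s$, and the ODE time must be identified with $\sqrt{s}$. Otherwise no factors from the chain rule $d/ds=(2\sqrt{s})^{-1}d/d\tau$ enter, because we never differentiate in $s$. Since $\Phi$ is assumed smooth as a function of $\tau$ itself, no smoothness in $s$ near $0$ is needed, and the reduction is complete.
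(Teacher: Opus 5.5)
Your proposal is correct and matches the paper's argument: the paper justifies this theorem by invoking \cref{thm:unique} for the template $X^+=\Phi(X,\tau)$ with step size $\tau=\sqrt{s}$, after noting (as you do) that \cref{def:osr-agd} is just \cref{def:osr} with $s$ renamed to $\sqrt{s}$. Your point that all derivatives are taken in $\tau$ rather than $s$, so no chain-rule factors appear, is the only real bookkeeping issue, and you handle it correctly.
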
  
\begin{rem}\label{rem:0ss-bound}
	Thanks to \cref{prop:0s-bound}, for the sequence $\{X_k\}_{k=0}^N$ generated by \cref{eq:X+Gamma}, we have the following  error estimates
	\begin{itemize}
		\item $\nm{X(\sqrt{s})-X_1}\leq C_1s^{\frac{r+2}{2}}+C_2\nm{X(0)-X_0}$,
		\item $\nm{X(t_k)-X_k}\leq C_3s^{\frac{r+1}{2}}+C_4\nm{X(0)-X_0}$ for all $1\leq k\leq N$, where $T = N\sqrt{s}$.
	\end{itemize}
\end{rem}
\subsection{Analysis of the heavy-ball method}
\label{sec:osr-hb}
We firstly focus on the heavy-ball method (cf.\cref{eq:hb})
\begin{equation}\label{eq:hb-osr}
	x_{k+1} = x_k + \beta_{\rm hb}(x_{k}-x_{k-1}) - s\nabla F(x_k).
\end{equation}
\begin{lem}\label{lem:hb}
	The HB iteration \cref{eq:hb-osr} is equivalent to  
	\begin{equation}\label{eq:hbv-x}
		X_{k+1} = \Phi_{\rm HB}(X_k,\sqrt{s}),
	\end{equation}
	where $X_k=(x_k,v_k)$ and the mapping $\Phi:\R^n\times\R^n\times\R_+\to\R^n\times\R^n$ is defined by
	\[
	\Phi_{\rm HB}(X,\sqrt{s}):=\begin{bmatrix}
		x+\sqrt{s}\left[	\beta_{\rm hb} v-\sqrt{s}\nabla F(x)\right]\\
		\beta_{\rm hb} v-\sqrt{s}\nabla F(x)
	\end{bmatrix},\quad\forall\, X= (x,v)\in\R^n\times\R^n.
	\]
	Moreover, if $\beta_{\rm hb} = \beta_{\rm hb}(\sqrt{s})$ is a smooth function such that $\lim_{s\to 0}\beta_{\rm hb}(\sqrt{s}) =1$, then $\Phi_{\rm HB}(X,0) = X$ for all $X\in\R^n\times\R^n$.
\end{lem}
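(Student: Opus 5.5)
The plan is to verify the equivalence directly from the definition of the velocity variable and then evaluate the mapping at $\sqrt{s}=0$. Following the transformation described in the introduction, I set $v_k := (x_k-x_{k-1})/\sqrt{s}$ for $k\geq 1$. For $k=0$, I adopt the convention $x_{-1}:=x_0-\sqrt{s}\,v_0$ for a given initial velocity $v_0$, so that the pair $(x_0,v_0)$ determines the HB iteration and conversely.

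\textbf{Step 1: HB implies the template.} Starting from \cref{eq:hb-osr}, subtract $x_k$ and divide by $\sqrt{s}$:
\[
v_{k+1}=\frac{x_{k+1}-x_k}{\sqrt{s}}=\beta_{\rm hb}\frac{x_k-x_{k-1}}{\sqrt{s}}-\sqrt{s}\nabla F(x_k)=\beta_{\rm hb}v_k-\sqrt{s}\nabla F(x_k).
\]
Multiplying the definition of $v_{k+1}$ by $\sqrt{s}$ gives
\[
x_{k+1}=x_k+\sqrt{s}\,v_{k+1}=x_k+\sqrt{s}\left[\beta_{\rm hb}v_k-\sqrt{s}\nabla F(x_k)\right].
\]
These two identities are exactly the two components of $\Phi_{\rm HB}(X_k,\sqrt{s})$, so $X_{k+1}=\Phi_{\rm HB}(X_k,\sqrt{s})$ as in \cref{eq:hbv-x}.

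\textbf{Step 2: the template implies HB.} Conversely, suppose $(x_k,v_k)$ satisfy \cref{eq:hbv-x}. The first component reads $x_{k+1}=x_k+\sqrt{s}\,v_{k+1}$, which recovers $v_{k+1}=(x_{k+1}-x_k)/\sqrt{s}$ for every $k$. Substituting $v_k=(x_k-x_{k-1})/\sqrt{s}$ into the second component and multiplying by $\sqrt{s}$ returns \cref{eq:hb-osr}. Hence the two iterations generate the same sequence $\{x_k\}$.

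\textbf{Step 3: the fixed-point property.} Put $\tau=\sqrt{s}$, so that
\[
\Phi_{\rm HB}(X,\tau)=\bigl(x+\tau\beta_{\rm hb}(\tau)v-\tau^2\nabla F(x),\ \beta_{\rm hb}(\tau)v-\tau\nabla F(x)\bigr).
\]
Setting $\tau=0$ makes the first component $x$. The second component becomes $\beta_{\rm hb}(0)v$, and $\beta_{\rm hb}(0)=1$ by the assumed smoothness and the limit hypothesis. Therefore $\Phi_{\rm HB}(X,0)=X$ for all $X\in\R^n\times\R^n$.

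\textbf{Expected obstacle.} The only delicate point is the interpretation of $\beta_{\rm hb}$. The choice \cref{eq:s-beta-hb} ties $s$ to $L$ and $\mu$, so the argument must treat $\beta_{\rm hb}$ as a function of $\sqrt{s}$, for example $\beta_{\rm hb}=(1-2\sqrt{\mu}\sqrt{s})^2$ after rewriting $\sqrt{\mu s_{\rm hb}}$. Under this reading $\Phi_{\rm HB}$ is smooth in $\tau$ whenever $F$ is smooth. Beyond this, the lemma is purely algebraic.
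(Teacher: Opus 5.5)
Your proposal is correct and follows the paper's argument: the paper likewise sets $v_k=(x_k-x_{k-1})/\sqrt{s}$ and uses $x_{k+1}=x_k+\sqrt{s}\,v_{k+1}$ together with the HB update to obtain $v_{k+1}=\beta_{\rm hb}v_k-\sqrt{s}\nabla F(x_k)$. Your converse direction, your initialization convention for $x_{-1}$, and your explicit check of $\Phi_{\rm HB}(X,0)=X$ are harmless additions that the paper leaves implicit; one small slip is that, since the step in the iteration is $s=s_{\rm hb}$ itself, Polyak's choice reads $\beta_{\rm hb}(\tau)=(1-\sqrt{\mu}\,\tau)^2$ (as in \cref{coro:osr-hb-polyak}), not $(1-2\sqrt{\mu}\sqrt{s})^2$, although this side remark does not affect the lemma.
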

\begin{proof}
	Introduce an auxiliary variable $	v_k =(x_k - x_{k-1})/\sqrt{s}$.
	This implies immediately the relation
	$x_{k+1} = x_k + \sqrt{s} v_{k+1}$. Combining this with the HB iteration \cref{eq:hb-osr} gives
	\[
	v_{k+1} = \frac{x_{k+1} - x_k}{\sqrt{s}} =  \beta_{\rm hb} v_k - \sqrt{s} \nabla F(x_k).
	\]
	Consequently, we obtain
	\[
	\left\{
	\begin{aligned}
		v_{k+1} ={}&  \beta_{\rm hb}  v_k -\sqrt{s}\nabla F(x_k),\\
		x_{k+1} = {}&x_k+\sqrt{s}  \beta_{\rm hb} v_k - s \nabla F(x_k) .
	\end{aligned}
	\right.
	\]
	This gives \cref{eq:hbv-x} and finishes the proof of this lemma.
\end{proof}
\begin{thm}\label{thm:hb-ode}
	Assume $\beta_{\rm hb} = \beta_{\rm hb}(\sqrt{s})$ is a smooth function such that $\lim_{s\to 0}\beta_{\rm hb}(\sqrt{s}) =1$. Then the $O(\sqrt{s})$-resolution ODE of the HB iteration \cref{eq:hb-osr} with respect to the equivalent template \cref{eq:hbv-x} is given by 
	\begin{equation}\label{eq:osr-hb}\small
		\begin{bmatrix}
			x\\v
		\end{bmatrix}'= \begin{bmatrix}
			v\\
			\beta_{\rm hb}'(0)v-\nabla F(x)
		\end{bmatrix}+\frac{\sqrt{s}}{2}\begin{bmatrix}
			\beta_{\rm hb}'(0) v-\nabla F(x)\\
			\left(\beta_{\rm hb}''(0)-[\beta_{\rm hb}'(0)]^2\right)v+	\nabla^2F(x)v
			+\beta_{\rm hb}'(0)\nabla F(x)
		\end{bmatrix}.
	\end{equation}
\end{thm}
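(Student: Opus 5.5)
The plan is to apply \cref{thm:unique-osr-agd} with $r=1$ to the equivalent template \cref{eq:hbv-x} from \cref{lem:hb}, with $\tau=\sqrt{s}$ as the expansion variable. The hypothesis $\beta_{\rm hb}(0)=1$ gives $\Phi_{\rm HB}(X,0)=X$, so the theorem applies. The formulas there give
\[
\Gamma_0=\Phi_1,\qquad \Gamma_1=\tfrac12\Phi_2-\tfrac12 H_{2,0},\qquad H_{2,0}=\nabla\Gamma_0\,\Gamma_0 .
\]
The proof therefore reduces to computing $\Phi_1$, $\Phi_2$ and the Jacobian of $\Gamma_0$. I would also assume $F\in C^2$ (implicitly needed for $\nabla^2F$ to appear) and $\beta_{\rm hb}\in C^2$ near $0$.

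\textbf{Step 1: derivatives of $\Phi_{\rm HB}$.} Write
\[
\Phi_{\rm HB}(X,\tau)=\begin{bmatrix} x+\tau\beta_{\rm hb}(\tau)v-\tau^2\nabla F(x)\\ \beta_{\rm hb}(\tau)v-\tau\nabla F(x)\end{bmatrix}
\]
and differentiate in $\tau$ at $\tau=0$.

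For the first component, $\partial_\tau(\tau\beta_{\rm hb})=\beta_{\rm hb}+\tau\beta_{\rm hb}'$ and $\partial_\tau^2(\tau\beta_{\rm hb})=2\beta_{\rm hb}'+\tau\beta_{\rm hb}''$. Using $\beta_{\rm hb}(0)=1$ this gives
\[
\Phi_1=\begin{bmatrix} v\\ \beta_{\rm hb}'(0)v-\nabla F(x)\end{bmatrix},\qquad
\Phi_2=\begin{bmatrix} 2\beta_{\rm hb}'(0)v-2\nabla F(x)\\ \beta_{\rm hb}''(0)v\end{bmatrix}.
\]
The first of these already gives $\Gamma_0$, the leading term of \cref{eq:osr-hb}.

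\textbf{Step 2: the correction term.} The Jacobian of $\Gamma_0$ with respect to $X=(x,v)$ is the block matrix
\[
\nabla\Gamma_0=\begin{bmatrix} O & I\\ -\nabla^2F(x) & \beta_{\rm hb}'(0)I\end{bmatrix}.
\]
Applying it to $\Gamma_0$ gives
\[
H_{2,0}=\begin{bmatrix}\beta_{\rm hb}'(0)v-\nabla F(x)\\ -\nabla^2F(x)v+\beta_{\rm hb}'(0)\big(\beta_{\rm hb}'(0)v-\nabla F(x)\big)\end{bmatrix}.
\]
Then $\Gamma_1=\tfrac12(\Phi_2-H_{2,0})$ has first component $\tfrac12(\beta_{\rm hb}'(0)v-\nabla F(x))$. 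Its second component is
\[
\tfrac12\Big[\big(\beta_{\rm hb}''(0)-[\beta_{\rm hb}'(0)]^2\big)v+\nabla^2F(x)v+\beta_{\rm hb}'(0)\nabla F(x)\Big].
\]
Assembling $X'=\Gamma_0+\sqrt{s}\,\Gamma_1$ yields exactly \cref{eq:osr-hb}.

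There is no real obstacle, since uniqueness and existence come from \cref{thm:unique-osr-agd}. The only points needing care are bookkeeping. First, one must keep the $\tau$-dependence of $\beta_{\rm hb}$ inside the product $\tau\beta_{\rm hb}(\tau)v$; this produces the factor $2\beta_{\rm hb}'(0)$ in $\Phi_2$. Second, the sign in the cross term $\beta_{\rm hb}'(0)\nabla F$ must be tracked correctly: it survives with a plus sign in the $v$-component after the subtraction.
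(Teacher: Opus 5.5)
Your proposal is correct and follows the paper's proof essentially line for line. You Taylor-expand $\Phi_{\rm HB}(X,\tau)$ at $\tau=0$ to obtain the same $\Phi_1,\Phi_2$, then apply \cref{thm:unique-osr-agd} with $\Gamma_0=\Phi_1$ and $\Gamma_1=\tfrac12\Phi_2-\tfrac12\nabla\Gamma_0\,\Gamma_0$; your explicit Jacobian block computation and your note that $F\in C^2$ is implicitly needed are harmless additions.
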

\begin{proof}
	Consider the the Taylor expansion of 
	\[
	\Phi_{\rm HB}(X,\tau)=\begin{bmatrix}
		x+\tau\left[	\beta_{\rm hb}(\tau) v-\tau\nabla F(x)\right]\\
		\beta_{\rm hb}(\tau) v-\tau\nabla F(x)
	\end{bmatrix}
	\]
	at $\tau = 0$:
	\[
	\Phi_{\rm HB}(X,\tau) =\Phi_0(X)+\tau\Phi_1(X)+ \frac{\tau^2}{2}\Phi_2(X)+o(\tau^2) ,
	\]
	where $	\Phi_0(X)=X$ and 
	\[
	\Phi_1(X)=\begin{bmatrix}
		v\\
		\beta_{\rm hb}'(0)v-\nabla F(x)
	\end{bmatrix},\,\Phi_2(X)=\begin{bmatrix}
		2	\beta_{\rm hb}'(0) v-2\nabla F(x)\\
		\beta_{\rm hb}''(0)v
	\end{bmatrix}.
	\]
	Thanks to \cref{thm:unique-osr-agd}, it follows that $\Gamma_0(X)=H_{1,0}(X)=\Phi_1(X)$ and 
	\[
	\begin{aligned}
		\Gamma_1(X) ={}& \frac{1}{2}\Phi_2(X)-\frac{1}{2}H_{2,0}(X)=
		\frac{1}{2}\Phi_2(X)-\frac{1}{2}\nabla H_{1,0}(X)H_{1,0}(X)\\
				={}&\frac{1}{2}\begin{bmatrix}
					\beta_{\rm hb}'(0) v-\nabla F(x)\\
					\left(\beta_{\rm hb}''(0)-[\beta_{\rm hb}'(0)]^2\right)v+	\nabla^2F(x)v
					+\beta_{\rm hb}'(0)\nabla F(x)
				\end{bmatrix}.
			\end{aligned}
			\]
			This gives \cref{eq:osr-hb} and concludes the proof.
		\end{proof}
		\begin{coro}\label{coro:osr-hb-polyak}
			Assume $F\in\mathcal S_\mu^2(\R^n)$ and consider the HB iteration \cref{eq:hb-osr} with Polyak's choice  (cf.\cref{eq:s-beta-hb})
			\begin{equation}\label{eq:beta-hb-polyak}
				\beta_{\rm hb}=(1-\sqrt{\mu s})^2.
			\end{equation}
			Then the $O(\sqrt{s})$-resolution ODE with respect to the equivalent template \cref{eq:hbv-x} is  given by
			\begin{equation}\label{eq:osr-hb-polyak}
				\begin{bmatrix}
					x\\v
				\end{bmatrix}'= \begin{bmatrix}
					v\\
					-2\sqrt{\mu}v-\nabla F(x)
				\end{bmatrix}+\frac{\sqrt{s}}{2}\begin{bmatrix}
					-2\sqrt{\mu} v-\nabla F(x)\\
					-2\mu v+	\nabla^2F(x)v
					-2\sqrt{\mu}\nabla F(x)
				\end{bmatrix}.
			\end{equation}
		\end{coro}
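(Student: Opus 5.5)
This corollary is a direct specialization of \cref{thm:hb-ode}, so the plan is to check that Polyak's choice \cref{eq:beta-hb-polyak} satisfies its hypotheses and then compute the two derivatives $\beta_{\rm hb}'(0)$ and $\beta_{\rm hb}''(0)$. Write $\tau=\sqrt{s}$ and regard \cref{eq:beta-hb-polyak} as the function
\[
\beta_{\rm hb}(\tau)=(1-\sqrt{\mu}\,\tau)^2=1-2\sqrt{\mu}\,\tau+\mu\tau^2 .
\]
This is a polynomial in $\tau$, hence smooth, and $\beta_{\rm hb}(0)=1$. The assumption $\lim_{s\to0}\beta_{\rm hb}(\sqrt{s})=1$ therefore holds. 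By \cref{lem:hb}, we also get $\Phi_{\rm HB}(X,0)=X$, so \cref{thm:hb-ode} applies. The regularity $F\in\mathcal S^2_\mu(\R^n)$ makes $\nabla^2F$ available, which is needed for the term $\nabla H_{1,0}H_{1,0}$ used in that theorem.

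Differentiating the polynomial gives
\[
\beta_{\rm hb}'(0)=-2\sqrt{\mu},\qquad \beta_{\rm hb}''(0)=2\mu,
\]
and hence
\[
\beta_{\rm hb}''(0)-[\beta_{\rm hb}'(0)]^2=2\mu-4\mu=-2\mu .
\]
Substituting into \cref{eq:osr-hb}:
\begin{itemize}
\item the leading part becomes $(v,\,-2\sqrt{\mu}v-\nabla F(x))$;
\item the $\sqrt{s}/2$ correction has first component $-2\sqrt{\mu}v-\nabla F(x)$;
\item its second component is $-2\mu v+\nabla^2F(x)v-2\sqrt{\mu}\nabla F(x)$.
\end{itemize}
This is exactly \cref{eq:osr-hb-polyak}.

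There is no real obstacle here. The only point needing care is the sign and factor bookkeeping in $\beta_{\rm hb}''(0)-[\beta_{\rm hb}'(0)]^2$. In particular, one must remember that the derivatives are taken with respect to $\tau=\sqrt{s}$, not with respect to $s$.
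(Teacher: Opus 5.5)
Your proposal is correct and follows the paper's proof. The paper likewise writes $\beta_{\rm hb}(\tau)=(1-\sqrt{\mu}\tau)^2$, reads off $\beta_{\rm hb}'(0)=-2\sqrt{\mu}$ and $\beta_{\rm hb}''(0)=2\mu$, and substitutes into \cref{thm:hb-ode}; your explicit check of $\beta_{\rm hb}''(0)-[\beta_{\rm hb}'(0)]^2=-2\mu$ and of the hypotheses is just a more detailed version of the same step.
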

		\begin{proof}
			Notice that for $	 \beta_{\rm hb}(\tau)=(1-\sqrt{\mu}\tau)^2$, we have $\beta_{\rm hb}'(0)=-2\sqrt{\mu}$ and $\beta_{\rm hb}''(0) = 2\mu$. Applying \cref{thm:hb-ode} to this setting gives \cref{eq:osr-hb-polyak} and completes the proof.
		\end{proof}
		\begin{coro}\label{coro:osr-hb-nagsc}
			Assume $F\in\mathcal S_\mu^2(\R^n)$ and consider the HB iteration \cref{eq:hb-osr} with the alternate choice \cref{eq:s-beta-shi}.
			Then the $O(\sqrt{s})$-resolution ODE with respect to the equivalent template \cref{eq:hbv-x} is  given by
			\begin{equation}\label{eq:osr-hb-nagsc}
				\begin{bmatrix}
					x\\v
				\end{bmatrix}'= \begin{bmatrix}
					v\\
					-2\sqrt{\mu}v-\nabla F(x)
				\end{bmatrix}+\frac{\sqrt{s}}{2}\begin{bmatrix}
					-2\sqrt{\mu} v-\nabla F(x)\\
					\nabla^2F(x)v
					-2\sqrt{\mu}\nabla F(x)
				\end{bmatrix}.
			\end{equation}
		\end{coro}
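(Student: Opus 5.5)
The plan is to apply \cref{thm:hb-ode} directly, exactly as in the proof of \cref{coro:osr-hb-polyak}. The only work is to read the momentum parameter in \cref{eq:s-beta-shi} as a smooth function of the variable $\tau=\sqrt{s}$, namely
\[
\beta_{\rm hb}(\tau)=\frac{1-\sqrt{\mu}\,\tau}{1+\sqrt{\mu}\,\tau}.
\]
This function is smooth near $\tau=0$ and satisfies $\beta_{\rm hb}(0)=1$, so the hypotheses of \cref{thm:hb-ode} hold. We thus only need its first two derivatives at $\tau=0$. As in \cref{coro:osr-hb-polyak}, we use the equivalent template \cref{eq:hbv-x} and treat $\beta_{\rm hb}$ as depending on the step-size variable only. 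The relation $s=1/L$ plays no role in the expansion in $\tau$.

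To get these derivatives, set $a=\sqrt{\mu}\,\tau$ and use the geometric series for $(1+a)^{-1}$:
\[
\frac{1-a}{1+a}=(1-a)(1-a+a^2-\cdots)=1-2a+2a^2+O(a^3).
\]
Reading off the Taylor coefficients gives $\beta_{\rm hb}'(0)=-2\sqrt{\mu}$ and $\tfrac12\beta_{\rm hb}''(0)=2\mu$, that is, $\beta_{\rm hb}''(0)=4\mu$. Alternatively one can differentiate $\beta_{\rm hb}(\tau)=-1+2/(1+\sqrt{\mu}\tau)$ twice, which gives the same values.

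Substituting into \cref{eq:osr-hb}, the $O(1)$ part becomes $(v,\,-2\sqrt{\mu}v-\nabla F(x))$. In the $O(\sqrt{s})$ part, the first component is $-2\sqrt{\mu}v-\nabla F(x)$. The coefficient of $v$ in the second component is $\beta_{\rm hb}''(0)-[\beta_{\rm hb}'(0)]^2=4\mu-4\mu=0$. The remaining terms of that component are $\nabla^2F(x)v-2\sqrt{\mu}\nabla F(x)$. This reproduces \cref{eq:osr-hb-nagsc}.

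There is no real obstacle. The only point needing care is the cancellation $\beta''(0)-[\beta'(0)]^2=0$. This cancellation is exactly what distinguishes the present corollary from \cref{coro:osr-hb-polyak}, where Polyak's choice gives $2\mu-4\mu=-2\mu$.
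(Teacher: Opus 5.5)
Your proposal is correct and follows the paper's proof exactly. You set $\beta_{\rm hb}(\tau)=(1-\sqrt{\mu}\tau)/(1+\sqrt{\mu}\tau)$, obtain $\beta_{\rm hb}'(0)=-2\sqrt{\mu}$ and $\beta_{\rm hb}''(0)=4\mu$, and substitute into \cref{thm:hb-ode}, where the $v$-coefficient $\beta_{\rm hb}''(0)-[\beta_{\rm hb}'(0)]^2=0$ in the second component yields \cref{eq:osr-hb-nagsc}.
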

		\begin{proof}
			From \cref{eq:s-beta-shi}, we have $	 \beta_{\rm hb}(\tau)=(1-\sqrt{\mu}\tau)/(1+\sqrt{\mu}\tau)$, and it follows that $\beta_{\rm hb}'(0)=-2\sqrt{\mu}$ and $\beta_{\rm hb}''(0) = 4\mu$. Again, invoking \cref{thm:hb-ode} yields \cref{eq:osr-hb-nagsc} and concludes the proof.
		\end{proof}	
		\begin{rem}\label{rem:hb-os}
			According to  \cref{coro:osr-hb-polyak,coro:osr-hb-nagsc}, the $O(1)$-resolution ODEs of \cref{eq:hb} with \cref{eq:s-beta-shi,eq:beta-hb-polyak} are identical to the well-known low-resolution ODE \cref{eq:hb-o1}. However, the $O(\sqrt{s})$-resolution ODEs are
			\begin{equation}\label{eq:osr-hb-nagsc-x}
				x'' +2\sqrt{\mu}  x' + \left(1 + \sqrt{ \mu s } -  \mu s \right) \nabla F(x) - \frac{s}{4} \nabla^2 F(x) \nabla F(x) =0,
			\end{equation}
			and
			\begin{equation}\label{eq:osr-hb-polyak-x}
				x'' +\left(2\sqrt{\mu} + \mu\sqrt{s}\right) x' + \left(1 + \sqrt{ \mu s } -  \mu s /2\right) \nabla F(x) - \frac{s}{4} \nabla^2 F(x) \nabla F(x)=0.
			\end{equation} 
			Compared with the high-resolution model \cref{eq:hb-os-shi} derived by Shi et al. \cite{shi_understanding_2021}, our $O(\sqrt{s})$-resolution ODE \cref{eq:osr-hb-nagsc-x} contains additional high-order $O(s)$-terms and provides a better approximation to \cref{eq:hb}; see \cref{fig:HB-NAG-ode,tab:hb_comparison}.  
		\end{rem}
		\subsection{Analysis of Nesterov's accelerated gradient}
		\label{sec:osr-nag}
		Note that \cref{eq:nag} can be recast into a more general form
		\begin{equation}\label{eq:general-nag}
			x_{k+1} = x_k-s\nabla F(x_k)+\beta_{\rm nag}(x_k-x_{k-1}) - \big[\delta\nabla F(x_k)-\eta\nabla F(x_{k-1})\big],
		\end{equation}
		which contains also the triple-momentum method \cite{fu_2023_understanding} with proper parameters $\delta$ and $\eta$.  
		\begin{lem}\label{lem:general-ang}
			The accelerated gradient method \cref{eq:general-nag} is equivalent to  
			\begin{equation}\label{eq:nagv-x}
				X_{k+1} = \Phi_{\rm NAG}(X_k,\sqrt{s}),
			\end{equation}
			where $X_k=(x_k,v_k)$ and the mapping $\Phi:\R^n\times\R^n\times\R_+\to\R^n\times\R^n$ is defined by
			\begin{equation}\label{eq:phi-nag}
				\Phi_{\rm NAG}(X,\sqrt{s}) := \begin{bmatrix}
					x+\sqrt{s}\beta_{\rm nag}^2v-(\delta+s)\nabla F(x) \\
					\beta_{\rm nag} v-\frac{\delta+s-\eta/\beta_{\rm nag}}{\sqrt{s}\beta_{\rm nag}}\nabla F(x)
				\end{bmatrix},
			\end{equation}
			for all $X= (x,v)\in\R^n\times\R^n$.
			Moreover, if $\beta_{\rm nag} = \beta_{\rm nag}(\sqrt{s}),\,\eta=\eta(\sqrt{s})$ and $\delta=\delta(\sqrt{s})$ are smooth functions such that 
			\begin{equation}\label{eq:cond-nag}
				\lim_{s\to 0}\beta_{\rm nag}(\sqrt{s}) =1,\quad\lim_{s\to 0}\delta(\sqrt{s}) =0,\quad\lim_{s\to 0}\frac{\delta(\sqrt{s})-\eta(\sqrt{s})/\beta_{\rm nag}(\sqrt{s})}{\sqrt{s}\beta_{\rm nag}(\sqrt{s})} =0,
			\end{equation}
			then $\Phi_{\rm NAG}(X,0) = X$ for all $X\in\R^n\times\R^n$.
		\end{lem}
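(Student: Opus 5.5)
The plan is to mimic the proof of \cref{lem:hb}: introduce an auxiliary velocity variable $v_k$ such that the two-step recursion \cref{eq:general-nag} becomes a one-step recursion in $X_k=(x_k,v_k)$. The naive choice $(x_k-x_{k-1})/\sqrt{s}$ does not work, because of the extra term $\eta\nabla F(x_{k-1})$. Instead, I would fold the memory term into the velocity and define
\[
v_k := \frac{(x_k-x_{k-1})+(\eta/\beta_{\rm nag})\nabla F(x_{k-1})}{\sqrt{s}\,\beta_{\rm nag}},
\qquad\text{equivalently}\qquad
\sqrt{s}\,\beta_{\rm nag}^2 v_k = \beta_{\rm nag}(x_k-x_{k-1})+\eta\nabla F(x_{k-1}).
\]
This choice is dictated by matching the first component of \cref{eq:phi-nag} with \cref{eq:general-nag}.

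\textbf{Step 1 (first component).} Substituting the identity above into \cref{eq:general-nag} gives
\[
x_{k+1}=x_k+\sqrt{s}\,\beta_{\rm nag}^2v_k-(\delta+s)\nabla F(x_k),
\]
which is exactly the first component of $\Phi_{\rm NAG}(X_k,\sqrt{s})$.

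\textbf{Step 2 (second component).} Apply the definition of $v$ at index $k+1$, namely
\[
v_{k+1}=\frac{x_{k+1}-x_k+(\eta/\beta_{\rm nag})\nabla F(x_k)}{\sqrt{s}\,\beta_{\rm nag}},
\]
and insert the expression for $x_{k+1}-x_k$ from Step 1. The $v_k$ term reduces to $\beta_{\rm nag}v_k$. The gradient terms combine into
\[
-\frac{\delta+s-\eta/\beta_{\rm nag}}{\sqrt{s}\,\beta_{\rm nag}}\nabla F(x_k),
\]
which is the second component of \cref{eq:phi-nag}.

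\textbf{Step 3 (converse and initialization).} Conversely, the pair of updates in \cref{eq:nagv-x} returns \cref{eq:general-nag}: eliminate $v_k$ using the first component at step $k$ together with the second component at step $k-1$. The iterations are therefore equivalent, given the initialization $v_0$ built from $x_0$, $x_{-1}$ via the definition of $v_k$.

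\textbf{Step 4 (fixed-point property).} Set $\tau=\sqrt{s}$ and regard $\beta_{\rm nag},\delta,\eta$ as smooth functions of $\tau$. Under \cref{eq:cond-nag}, the first component tends to $x+0\cdot v-0\cdot\nabla F(x)=x$, since $\tau\beta_{\rm nag}^2\to0$ and $\delta+\tau^2\to0$. The second component tends to $1\cdot v-0\cdot\nabla F(x)=v$, using the third limit in \cref{eq:cond-nag} plus $\tau^2/(\tau\beta_{\rm nag})\to0$. Hence $\Phi_{\rm NAG}(X,0)=X$, where $\Phi_{\rm NAG}(\cdot,0)$ is understood as the continuous extension at $\tau=0$.

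I expect the main obstacle to be guessing the correct auxiliary variable. The coefficient $\beta_{\rm nag}^2$ and the shift $\eta/\beta_{\rm nag}$ in \cref{eq:phi-nag} only arise with this nonstandard normalization, so I would determine $v_k$ by reverse-engineering the first component, as above. A secondary subtlety is that the coefficient in the second component is a quotient by $\sqrt{s}$, so its value at $\tau=0$ must be interpreted as a limit. That is why the third condition in \cref{eq:cond-nag} is stated as a limit rather than as an evaluation.
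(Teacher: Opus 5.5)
Your proposal is correct and is essentially the paper's proof. The paper uses the same auxiliary variable $v_k=\bigl(x_k-x_{k-1}+(\eta/\beta_{\rm nag})\nabla F(x_{k-1})\bigr)/(\sqrt{s}\beta_{\rm nag})$, gets the $v$-update by the same rearrangement as your Step~2, and dismisses the fixed-point property as easy to check; your explicit converse (Step~3) and your limit reading of $\Phi_{\rm NAG}(\cdot,0)$, with the extra term $s/(\sqrt{s}\beta_{\rm nag})\to 0$ handled separately, are details the paper leaves implicit.
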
 
		\begin{proof}
			Introduce an auxiliary variable
			\begin{equation}\label{eq:vk-nag}
				v_k = \frac{x_k - x_{k-1} + \eta/\beta_{\rm nag}\nabla F(x_{k-1})}{\sqrt{s}\beta_{\rm nag}}.
			\end{equation}
			Then the update $x_{k+1}$ satisfies
			\[
			x_{k+1} = x_k + \sqrt{s}\beta_{\rm nag} v_{k+1} - \eta/\beta_{\rm nag}\nabla F(x_k).
			\]
			Now rearrange  \cref{eq:general-nag} as follows 
			\[
			\begin{aligned}
				x_{k+1} - x_k + \eta/\beta_{\rm nag}\nabla F(x_k) = \beta_{\rm nag}\left[x_k - x_{k-1}+\eta/\beta_{\rm nag}\nabla F(x_{k-1})\right] - (\delta+s-\eta/\beta_{\rm nag})\nabla F(x_k)  ,
			\end{aligned}
			\]
			which implies
			\[
			v_{k+1} = \beta_{\rm nag} v_k - \frac{\delta+s-\eta/\beta_{\rm nag}}{\sqrt{s}\beta_{\rm nag}}\nabla F(x_k).
			\]
			Consequently, we obtain \cref{eq:nagv-x}. As it is easy to check $\Phi_{\rm NAG}(X,0)=X$ with the additional condition \cref{eq:cond-nag}, we complete the proof of this lemma.
		\end{proof}
		\begin{thm}\label{thm:agd}
			Assume that $\beta_{\rm nag} = \beta_{\rm nag}(\sqrt{s}),\,\eta=\eta(\sqrt{s})$ and $\delta=\delta(\sqrt{s})$ are smooth functions satisfying \cref{eq:cond-nag}. Suppose that we have  $\eta(\tau)=\beta_{\rm nag}(\tau)(\tau\eta_1+\tau^2\eta_2+o(\tau^2))$ and 
			\[
			a(\tau):=	\frac{\delta(\tau)+\tau^2-\eta(\tau)/\beta_{\rm nag}(\tau)}{\tau\beta_{\rm nag}(\tau)} =  \tau\delta_1+\tau^2\delta_2+o(\tau^2),
			\]
			with some $\eta_1,\,\eta_2,\,\delta_1,\,\delta_2\in\R$. Then the $O(\sqrt{s})$-resolution ODE of the accelerated gradient method \cref{eq:general-nag} with respect to the equivalent template \cref{eq:nagv-x} is given by 
			\begin{equation}\label{eq:osr-general-nag}
				\begin{bmatrix}
					x\\v
				\end{bmatrix}'= \begin{bmatrix}
					v-\eta_1\nabla F(x)\\
					\beta_{\rm nag}'(0)v-\delta_1\nabla F(x)
				\end{bmatrix}+\frac{\sqrt{s}}{2}\left[\Gamma_{1,1}(x,v)-\Gamma_{1,2}(x,v)\right],
			\end{equation}
			where
			\[
			\begin{aligned}
				\Gamma_{1,1}(x,v):=	{}& \begin{bmatrix}
					4\beta_{\rm nag}'(0)v	-	[2\eta_2+2\delta_1]\nabla F(x)\\
					\beta_{\rm nag}''(0)v-2\delta_2\nabla F(x)
				\end{bmatrix},\\
				\Gamma_{1,2}(x,v):={}& \begin{bmatrix}\eta_1^2\nabla^2F(x)\nabla F(x)
					-\eta_1\nabla^2F(x)v+	\beta_{\rm nag}'(0)v-\delta_1\nabla F(x)\\
					\eta_1\delta_1\nabla^2F(x)\nabla F(x)	-\delta_1\nabla^2F(x)v+[\beta_{\rm nag}'(0)]^2v-\beta_{\rm nag}'(0)\delta_1\nabla F(x)
				\end{bmatrix}.
			\end{aligned}
			\]
		\end{thm}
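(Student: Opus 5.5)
The plan is to apply \cref{thm:unique-osr-agd} with $r=1$ to the equivalent template \cref{eq:nagv-x} provided by \cref{lem:general-ang}. The condition \cref{eq:cond-nag} guarantees $\Phi_{\rm NAG}(X,0)=X$, so the theorem applies. It gives
\[
\Gamma_0=\Phi_1,\qquad \Gamma_1=\tfrac12\Phi_2-\tfrac12\nabla\Gamma_0\,\Gamma_0 .
\]
The whole task therefore reduces to computing the second-order Taylor expansion of $\Phi_{\rm NAG}(X,\tau)$ at $\tau=0$, where $\sqrt s$ is replaced by $\tau$, and then one Jacobian-vector product.

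First I rewrite $\Phi_{\rm NAG}$ in terms of the given expansions. The second component is simply $\beta_{\rm nag}(\tau)v-a(\tau)\nabla F(x)$. The coefficient $\delta+\tau^2$ in the first component is not expanded directly. Instead I use the definition of $a$ to write
\[
\delta+\tau^2=\tau\beta_{\rm nag}(\tau)a(\tau)+\eta(\tau)/\beta_{\rm nag}(\tau).
\]
Then I insert $a(\tau)=\tau\delta_1+\tau^2\delta_2+o(\tau^2)$, $\eta/\beta_{\rm nag}=\tau\eta_1+\tau^2\eta_2+o(\tau^2)$ and $\beta_{\rm nag}(0)=1$. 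This gives
\[
\delta+\tau^2=\tau\eta_1+\tau^2(\eta_2+\delta_1)+o(\tau^2).
\]
Together with $\beta_{\rm nag}^2(\tau)=1+2\beta_{\rm nag}'(0)\tau+O(\tau^2)$, I read off
\[
\Phi_1=\begin{bmatrix}v-\eta_1\nabla F(x)\\ \beta_{\rm nag}'(0)v-\delta_1\nabla F(x)\end{bmatrix},\qquad
\Phi_2=\begin{bmatrix}4\beta_{\rm nag}'(0)v-2(\eta_2+\delta_1)\nabla F(x)\\ \beta_{\rm nag}''(0)v-2\delta_2\nabla F(x)\end{bmatrix}.
\]
This $\Phi_2$ is exactly $\Gamma_{1,1}$. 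The factor $4$ arises as twice the coefficient $2\beta_{\rm nag}'(0)$ of $\tau^2 v$.

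Next I compute $H_{2,0}=\nabla\Gamma_0\,\Gamma_0$. The Jacobian of $\Gamma_0$ with respect to $(x,v)$ is the block matrix
\[
\begin{bmatrix}-\eta_1\nabla^2F(x)& I\\ -\delta_1\nabla^2F(x)&\beta_{\rm nag}'(0)I\end{bmatrix}.
\]
Multiplying it by $\Gamma_0=\Phi_1$ reproduces, term by term, the two components of $\Gamma_{1,2}$. Assembling $\Gamma_1=\tfrac12(\Gamma_{1,1}-\Gamma_{1,2})$ then yields \cref{eq:osr-general-nag}.

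The main obstacle is the first step: the bookkeeping in the expansion of the first component. Expanding $\delta$ naively would obscure how $\eta_2$ and $\delta_1$ combine. The identity $\delta+\tau^2=\tau\beta_{\rm nag}a+\eta/\beta_{\rm nag}$ is what makes the expansion clean, and along the way I must check that the cross term $\tau\beta_{\rm nag}'(0)\cdot\tau\delta_1\cdot\tau$ is of order $\tau^3$ and can be dropped. Smoothness of $F$ of class $C^2$ is needed only to make $\nabla\Gamma_0$ meaningful. Smoothness of the parameters ensures that the $o(\tau^2)$ remainders are harmless.
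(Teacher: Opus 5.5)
Your proposal is correct and matches the paper's proof: apply \cref{thm:unique-osr-agd} with $r=1$ to the template from \cref{lem:general-ang}, read off $\Phi_1$ and $\Phi_2=\Gamma_{1,1}$ from the Taylor expansion of $\Phi_{\rm NAG}(X,\tau)$, and identify $\nabla\Gamma_0\,\Gamma_0$ with $\Gamma_{1,2}$. Your identity $\delta+\tau^2=\tau\beta_{\rm nag}a+\eta/\beta_{\rm nag}$ explicitly justifies the first-component expansion $\delta+\tau^2=\tau\eta_1+\tau^2(\eta_2+\delta_1)+o(\tau^2)$, which the paper states without derivation.
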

		\begin{proof}
			Thanks to \cref{lem:general-ang}, NAG \cref{eq:general-nag} is equivalent to $X_{k+1}=\Phi_{\rm NAG}(X_k,\sqrt{s})$ (cf.\cref{eq:nagv-x,eq:phi-nag}). Consider the the Taylor expansion of 
					\[
					\Phi_{\rm NAG}(X,\tau)= \begin{bmatrix}
						x+\tau\beta_{\rm nag}^2(\tau)v-(\delta(\tau)+\tau^2)\nabla F(x) \\
						\beta_{\rm nag}(\tau) v-a(\tau)\nabla F(x)
					\end{bmatrix}
					\]
					at $\tau = 0$:
					\[
					\Phi_{\rm NAG}(X,\tau) =\Phi_0(X)+\tau\Phi_1(X)+ \frac{\tau^2}{2}\Phi_2(X)+o(\tau^2) ,
					\]
					where $	\Phi_0(X)=X$ and 
					\[
					\Phi_1(X)= \begin{bmatrix}
						v-\eta_1\nabla F(x)\\
						\beta_{\rm nag}'(0)v-\delta_1\nabla F(x)
					\end{bmatrix},\,\Phi_2(X)=\begin{bmatrix}
						4\beta_{\rm nag}'(0)v	-	[2\eta_2+2\delta_1]\nabla F(x)\\
						\beta_{\rm nag}''(0)v-2\delta_2\nabla F(x)
					\end{bmatrix}.
					\]
					Thanks to \cref{thm:unique-osr-agd}, it follows that $\Gamma_0(X)=H_{1,0}(X)=\Phi_1(X)$ and 
					\[\small
					\begin{aligned}
						{}&				\Gamma_1(X) = \frac{1}{2}\Phi_2(X)-\frac{1}{2}H_{2,0}(X)=
						\frac{1}{2}\Phi_2(X)-\frac{1}{2}\nabla H_{1,0}(X)H_{1,0}(X)\\
						={}&\frac12\begin{bmatrix}
							4\beta_{\rm nag}'(0)v	-	[2\eta_2+2\delta_1]\nabla F(x)\\
							\beta_{\rm nag}''(0)v-2\delta_2\nabla F(x)
						\end{bmatrix}-\frac{1}{2}\begin{bmatrix}
							-\eta_1\nabla^2F(x)&I\\-\delta_1\nabla^2F(x)&\beta_{\rm nag}'(0)I
						\end{bmatrix}\begin{bmatrix}
							v-\eta_1\nabla F(x)\\
							\beta_{\rm nag}'(0)v-\delta_1\nabla F(x)
						\end{bmatrix}\\				
						={}&\frac{1}{2}\begin{bmatrix}
							3\beta_{\rm nag}'(0) v+\eta_1\nabla^2F(x)v-[2\eta_2+\delta_1]\nabla F(x)-\eta_1^2\nabla^2F(x)\nabla F(x)
							\\
							\left(\beta_{\rm nag}''(0)-[\beta_{\rm nag}'(0)]^2\right)v+\delta_1	\nabla^2F(x)v
							+[\delta_1\beta_{\rm nag}'(0)-2\delta_2]\nabla F(x)-\eta_1\delta_1\nabla^2F(x)\nabla F(x)
						\end{bmatrix}.
					\end{aligned}
					\]
					This leads to \cref{eq:osr-general-nag} and finishes the proof.
				\end{proof}
				\begin{coro}\label{coro:osr-nag-mu}
					Assume $F\in\mathcal S_\mu^2(\R^n)$. Then \cref{eq:nag} with Nesterov's choice (cf.\cref{eq:s-beta-shi})
					\begin{equation}\label{eq:beta-nes}
						\beta_{\rm nag\text{-}sc}=\frac{1-\sqrt{\mu s}}{1+\sqrt{\mu s}}
					\end{equation}
					is equivalent to the template 
					\begin{equation}\label{eq:nag-sc-equi}
						\begin{bmatrix}
							x_{k+1}\\v_{k+1}
						\end{bmatrix}=\begin{bmatrix}
							x_k+\sqrt{s}\beta_{\rm nag\text{-}sc}^2v_k-s(1+\beta_{\rm nag\text{-}sc})\nabla F(x_k) \\
							\beta_{\rm nag\text{-}sc} v_k-\sqrt{s}\nabla F(x_k)
						\end{bmatrix},
					\end{equation}
					and  the $O(\sqrt{s})$-resolution ODE 	is  given by		
					\begin{equation}\label{eq:osr-nag-mu}
						\begin{bmatrix}
							x\\v
						\end{bmatrix}'= \begin{bmatrix}
							v\\
							-2\sqrt{\mu}v- \nabla F(x)
						\end{bmatrix}+\frac{\sqrt{s}}{2}\begin{bmatrix}	-6\sqrt{\mu}v-3\nabla F(x)\\
							\nabla^2F(x)v					-2\sqrt{\mu}\nabla F(x)
						\end{bmatrix}.
					\end{equation} 
				\end{coro}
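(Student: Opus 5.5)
The plan is to reduce \cref{eq:nag} with the constant momentum \cref{eq:beta-nes} to the general form \cref{eq:general-nag}, and then read off the result from \cref{lem:general-ang} and \cref{thm:agd}. The gradient in \cref{eq:nag} is evaluated at the extrapolated point, so it is not literally of the form \cref{eq:general-nag}. I would therefore rewrite it in the extrapolated sequence. Set $y_k:=x_k+\beta(x_k-x_{k-1})$ with $\beta=\beta_{\rm nag\text{-}sc}$, so that $x_{k+1}=y_k-s\nabla F(y_k)$. Then
\[
y_{k+1}=x_{k+1}+\beta(x_{k+1}-x_k)=y_k+\beta(y_k-y_{k-1})-s(1+\beta)\nabla F(y_k)+\beta s\nabla F(y_{k-1}).
\]
This is exactly \cref{eq:general-nag} (written in $y$, which is relabeled as $x$ in the statement) with
\[
\delta=\eta=\beta_{\rm nag\text{-}sc}\,s .
\]
The step I expect to need the most care is this identification of the correct sequence and of the parameters $\delta,\eta$. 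After that, everything is mechanical.

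Next I would verify the hypotheses \cref{eq:cond-nag} and compute the template \cref{eq:nag-sc-equi}. With $\tau=\sqrt{s}$ we have $\beta(\tau)=(1-\sqrt\mu\tau)/(1+\sqrt\mu\tau)\to1$ and $\delta(\tau)=\beta(\tau)\tau^2\to0$. Moreover $\eta/\beta=\tau^2$, so
\[
a(\tau)=\frac{\beta\tau^2+\tau^2-\tau^2}{\tau\beta}=\tau\to0 .
\]
Substituting into \cref{eq:phi-nag}, the first component becomes $x+\sqrt s\beta^2v-s(1+\beta)\nabla F(x)$ and the second becomes $\beta v-\sqrt s\nabla F(x)$, which is \cref{eq:nag-sc-equi}. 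The auxiliary variable is $v_k$ from \cref{eq:vk-nag}.

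Finally I would identify the expansion coefficients required by \cref{thm:agd}. From $\eta(\tau)=\beta(\tau)\tau^2$ we get $\eta_1=0$ and $\eta_2=1$. From $a(\tau)=\tau$ we get $\delta_1=1$ and $\delta_2=0$. Exactly as in \cref{coro:osr-hb-nagsc}, $\beta'(0)=-2\sqrt\mu$ and $\beta''(0)=4\mu$. Plugging these into \cref{eq:osr-general-nag}:
\begin{itemize}
\item the leading term is $(v,\,-2\sqrt\mu v-\nabla F(x))$;
\item the first component of $\Gamma_{1,1}-\Gamma_{1,2}$ is $-8\sqrt\mu v-4\nabla F+2\sqrt\mu v+\nabla F=-6\sqrt\mu v-3\nabla F$;
\item the second component is $4\mu v-\big(-\nabla^2F v+4\mu v+2\sqrt\mu\nabla F\big)=\nabla^2F(x)v-2\sqrt\mu\nabla F(x)$.
\end{itemize}
This gives \cref{eq:osr-nag-mu}.
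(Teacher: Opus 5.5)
Your proposal is correct and follows the paper's proof. The paper also rewrites \cref{eq:nag} in the form \cref{eq:general-nag} with $\delta=\eta=s\beta_{\rm nag\text{-}sc}$, reads off $\eta_1=\delta_2=0$, $\delta_1=\eta_2=1$, $\beta'(0)=-2\sqrt{\mu}$ and $\beta''(0)=4\mu$, and then applies \cref{lem:general-ang} and \cref{thm:agd}. Your explicit remark that this single-sequence form holds for the extrapolated sequence $x_k+\beta(x_k-x_{k-1})$, which is then relabeled as $x_k$, is a point the paper leaves implicit, and it is correct.
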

				\begin{proof}
					Rewrite \cref{eq:nag} as the standard form of \cref{eq:general-nag}:
					\begin{equation}\label{eq:nag-sc-equiv}
						x_{k+1}=x_k-s \nabla F\left(x_k\right)+\beta_{\rm nag\text{-}sc}\left(x_k-x_{k-1}\right)
						-s\beta_{\rm nag\text{-}sc}\left[  \nabla F\left(x_k\right)- \nabla F\left(x_{k-1}\right)\right],
					\end{equation}
					which yields that $\delta=\eta=s\beta_{\rm nag\text{-}sc}$.
					Applying \cref{lem:general-ang} gives the equivalent template \cref{eq:nag-sc-equi}. Clearly, we have  $\beta_{\rm nag\text{-}sc}(\tau) = (1-\sqrt{\mu}\tau)/(1+\sqrt{\mu}\tau)$ and $\beta_{\rm nag\text{-}sc}'(0)=-2\sqrt{\mu},\,\beta_{\rm nag\text{-}sc}''(0)=4\mu$. Moreover, in this case, \cref{eq:cond-nag} holds true and
					\[
					\eta(\tau)=\tau^2\beta_{\rm nag\text{-}sc}(\tau),\quad 		\frac{\delta(\tau)+\tau^2-\eta(\tau)/\beta_{\rm nag\text{-}sc}(\tau)}{\tau\beta_{\rm nag\text{-}sc}(\tau)} =\tau,
					\]
					which implies $\eta_1=\delta_2=0,\,\delta_1=\eta_2=1$.
					Applying  \cref{thm:agd} gives \cref{eq:osr-nag-mu} and concludes the proof.
				\end{proof}
				\begin{rem}\label{rem:compare}
					From \cref{coro:osr-nag-mu}, the $O(1)$-resolution ODE of \cref{eq:nag} with \cref{eq:beta-nes} coincides with the low-resolution ODE \cref{eq:hb-o1} of \cref{eq:hb}, and the $O(\sqrt{s})$-resolution ODE reads as 
					\begin{equation}\label{eq:os1-nag-sc}
						x''+\left(2\sqrt{\mu}+\sqrt{s}\nabla^2 F(x)\right)x'+(1+\sqrt{\mu s}-3\mu s)\nabla F(x) - \frac{3s}{4}\nabla^2F(x)\nabla F(x)=0,
					\end{equation}
					which is very close to the high-resolution ODE \cref{eq:os-nag-sc}, differing from the high-order $O(s)$-terms. As we can see, both \cref{eq:os-nag-sc,eq:os1-nag-sc} have the Hessian-driven damping term $\sqrt{s}\nabla^2F(x)x'$, which, however, does not exist in the $O(\sqrt{s})$-resolution ODEs \cref{eq:osr-hb-nagsc-x,eq:osr-hb-polyak-x} of \cref{eq:hb}.
					
					In view of \cref{eq:beta-hb-polyak,eq:beta-nes}, we have $\beta_{\rm hb}=\beta_{\rm nag\text{-}sc}(1-\mu s) $ and reformulate \cref{eq:hb} as 
					\begin{equation}\label{eq:hb-equi}
						x_{k+1}=x_k-s \nabla F\left(x_k\right)+\beta_{\rm nag\text{-}sc}\left(x_k-x_{k-1}\right)
						- \mu s \beta_{\rm nag\text{-}sc}(x_k-x_{k-1}).
					\end{equation}
					Observing \cref{eq:nag-sc-equiv}, we find that the subtle difference comes from $	 \mu s \beta_{\rm nag\text{-}sc}(x_k-x_{k-1})$ and 
					$s\beta_{\rm nag\text{-}sc}\left[ \nabla F\left(x_k\right)- \nabla F\left(x_{k-1}\right)\right]$, both of which are high-order $O(s)$-terms. The former refers to the velocity correction $\mu\sqrt{s}x'$ in \cref{eq:osr-hb-polyak-x} while the latter is called the gradient correction (cf.\cite[Section 1.1]{shi_understanding_2021}), also known as the Hessian-driven damping term $\sqrt{s}\nabla^2F(x)x'$. 
				\end{rem}
				
				In \cref{fig:HB-NAG-ode}, we report the trajectories of the low and high-resolution ODEs of \cref{eq:hb,eq:nag}. As expected, the $O(\sqrt{s})$-resolution ODEs are much better close to the discrete methods than the $O(1)$-resolution ODEs. Also, the high-resolution ODEs \cref{eq:hb-os-shi,eq:os-nag-sc} does not provide approximations as good as our $O(\sqrt{s})$-resolution models.
				\begin{figure}[H]
					\centering  
					{
						\begin{minipage}[t]{0.35\textwidth}
							\centering          
							\includegraphics[width=0.9\textwidth]{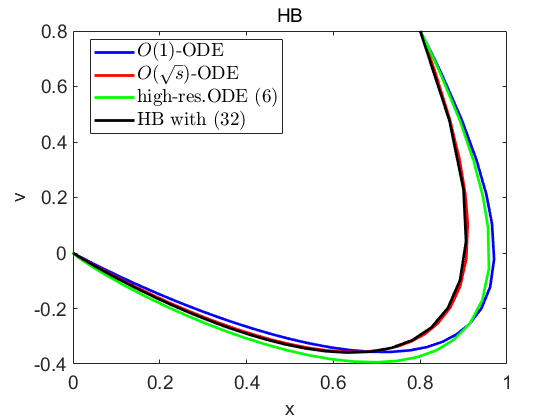}   
						\end{minipage}%
					}\centering  
					{
						\begin{minipage}[t]{0.35\textwidth}
							\centering          
							\includegraphics[width=0.9\textwidth]{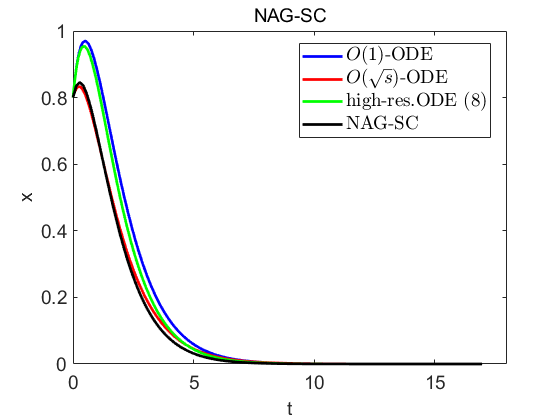}   
						\end{minipage}%
					}
					\caption{Illustration of the behaviors of the discrete-time algorithms and their corresponding ODEs. The objective is $F(x) = 1/2x^2$ with the step-size $s=0.02$ and initial condition $(x_0,v_0)=(0.8, 0.8)$.} 
					\label{fig:HB-NAG-ode}  
				\end{figure}
				
				\begin{table}[H]
					\centering
					\renewcommand{\arraystretch}{0.75}	
					\caption{Convergence rates of the $O(1)$-resolution ODEs of HB and NAG-SC}
					\label{tab:hb_comparison}
					\begin{tabular}{c c S[table-format=1.2e-2] S[table-format=1.2] S[table-format=1.2] S[table-format=1.2] S[table-format=1.2e-2] S[table-format=1.2]}
						\toprule
						{Algor. / $O(1)$} & {$s$} & {$E_1(s)$} & {Rate} & {$E_2(s)$} & {Rate} & {$E_3(s)$} & {Rate} \\
						\midrule
						\multirow{5}{*}{HB / \text{\cref{eq:hb-o1}}}
						& $1/2^{4}$ & 5.81e-03 & {--} & 3.43e+00 & {--} & 9.65e-02 & {--} \\
						& $1/2^{5}$ & 1.46e-03 & 1.99 & 3.47e+00 & -0.01 & 4.85e-02 & 0.99 \\
						& $1/2^{6}$ & 3.66e-04 & 2.00 & 3.48e+00 & -0.01 & 2.43e-02 & 1.00 \\
						& $1/2^{7}$ & 9.15e-05 & 2.00 & 3.49e+00 & -0.00 & 1.22e-02 & 1.00 \\
						\midrule
						\multirow{5}{*}{HB / \text{\cref{eq:hb-os-shi}}}
						& $1/2^{4}$ & 5.48e-03 & {--} & 1.44e+00 & {--} & 4.62e-02 & {--} \\
						& $1/2^{5}$ & 1.42e-03 & 1.95 & 1.47e+00 & -0.03 & 2.33e-02 & 0.99 \\
						& $1/2^{6}$ & 3.60e-04 & 1.98 & 1.48e+00 & -0.02 & 1.17e-02 & 0.99 \\
						& $1/2^{7}$ & 9.08e-05 & 1.99 & 1.49e+00 & -0.01 & 5.85e-03 & 1.00 \\
						\midrule
						\multirow{5}{*}{NAG-SC / \text{\cref{eq:hb-o1}}}
						& $1/2^{4}$ & 1.55e-02 & {--} & 5.92e+00 & {--} & 1.74e-01 & {--} \\		
						& $1/2^{5}$ & 4.13e-03 & 1.91 & 6.20e+00 & -0.07 & 9.06e-02 & 0.94 \\
						& $1/2^{6}$ & 1.06e-03 & 1.96 & 6.35e+00 & -0.03 & 4.63e-02 & 0.97 \\
						& $1/2^{7}$ & 2.70e-04 & 1.98 & 6.42e+00 & -0.02 & 2.34e-02 & 0.98 \\
						\bottomrule
					\end{tabular}
				\end{table}
				\begin{table}[H]
					\centering
					\setlength{\tabcolsep}{3.2pt}
					\renewcommand{\arraystretch}{0.8}	
					\caption{Convergence rates of the $O(\sqrt{s})$-resolution ODEs of HB and NAG-SC }
					\label{tab:hb_os}
					\begin{tabular}{c c S[table-format=1.2e-2] S[table-format=1.2] S[table-format=1.2e-2] S[table-format=1.2] S[table-format=1.2e-2] S[table-format=1.2]}
						\toprule
						{Algor. / $O(\sqrt{s})$} & {$s$} & {$E_1(s)$} & {Rate} & {$E_2(s)$} & {Rate} & {$E_3(s)$} & {Rate} \\
						\midrule
						\multirow{5}{*}{HB / \text{\cref{eq:osr-hb-polyak-x}}}
						& $1/2^{4}$ & 9.72e-05 & {--} & 2.62e-01 & {--} & 6.77e-03 & {--} \\
						& $1/2^{5}$ & 1.12e-05 & 3.12 & 1.38e-01 & 0.92 & 1.77e-03 & 1.94 \\
						& $1/2^{6}$ & 1.34e-06 & 3.07 & 7.09e-02 & 0.96 & 4.52e-04 & 1.97 \\
						& $1/2^{7}$ & 1.63e-07 & 3.03 & 3.60e-02 & 0.98 & 1.14e-04 & 1.98 \\
						\midrule
						\multirow{5}{*}{NAG-SC / \text{\cref{eq:os1-nag-sc}}}
						& $1/2^{4}$ & 1.15e-03 & {--} & 4.98e-01 & {--} & 1.22e-02 & {--} \\
						& $1/2^{5}$ & 1.58e-04 & 2.86 & 2.73e-01 & 0.87 & 3.33e-03 & 1.87 \\
						& $1/2^{6}$ & 2.06e-05 & 2.93 & 1.44e-01 & 0.93 & 8.72e-04 & 1.93 \\
						& $1/2^{7}$ & 2.64e-06 & 2.97 & 7.37e-02 & 0.96 & 2.23e-04 & 1.97 \\
						\bottomrule
					\end{tabular}
				\end{table} 
				
				Again, let us check the convergence rate of the $O((\sqrt{s})^{r})$-resolution ODEs regarding to three measurements:
				\[
				E_1(s):=\nm{x(\sqrt{s})-x_1},\, E_2(s):=\sum_{k=1}^{N}\nm{x(t_k)-x_k},\, 
				E_3(s):=\sqrt{\sum_{k=1}^{N}\sqrt{s}\nm{x(t_k)-x_k}^2},
				\]
				where $t_k = k\sqrt{s}$ for $1\leq k\leq N=T/\sqrt{s}$ with fixed time $T>0$. Here, we focus only on the component $x$ not the whole vector $X$ because existing ODEs do not admit proper first-order presentations like \cref{eq:osr-hb-polyak}. According to \cref{rem:0ss-bound}, if $X(0)=X_0$, then we have $E_1(s)=O(s^{\frac{r+2}{2}}),E_2(s)=O(s^{\frac{r}{2}})$ and $E_3(s)=O(s^{\frac{r+1}{2}})$. This agrees well with  which are verified by the numerical results in \cref{tab:pdhg_comparison,tab:pdhg_osarison_poly}. 

					\subsection{Analysis of accelerated gradient methods with variable parameters}
					In this section, we focus on accelerated gradient methods with variable parameters. Following the main idea from \cite{luo_icpdps_2025}, to find a proper equivalent template, we aim to seek the intrinsic finite difference presentation in terms of the intrinsic step size $\sqrt{s}$.
					
					\begin{thm}\label{thm:osr-nag-c}
						Assume $F\in\mathcal F^2(\R^n)$. Then \cref{eq:nag} with the dynamical changing parameter
						\begin{equation}\label{eq:beta-nes-c}
							\beta_{\rm nag\text{-}c}=\frac{k}{k+3}
						\end{equation}
						is equivalent to  $ 	X_{k+1} = \Phi_{\rm nag\text{-}c}(X_k,\sqrt{s})$, where $X_k=(x_k,v_k,t_k)$ and 
						\begin{equation}\label{eq:Phi-nag-c}
							\Phi_{\rm nag\text{-}c}(X,\tau):=\begin{bmatrix}
								x - \tau^2\left(2 - 3\tau /t\right)\nabla F(x)+ \tau\left(1 - 3\tau /t\right)^2v \\
								\left(1 - 3\tau/t\right)v- \tau\nabla F(x)\\
								t+\tau
							\end{bmatrix},\,X = (x,v,t).
						\end{equation}
						Moreover, the corresponding $O(\sqrt{s})$-resolution ODE
						is given by
						\begin{equation} 
							\label{eq:osr-nag-c}
							\begin{bmatrix}
								x\\v\\ t
							\end{bmatrix}'= \begin{bmatrix}
								v \\-3/tv-\nabla F(x)\\1
							\end{bmatrix} 
							+\frac{\sqrt{s}}{2}\begin{bmatrix}
								-9/tv-3\nabla F(x)\\\nabla^2 F(x)v-12/t^2v-3/t\nabla F(x)\\0
							\end{bmatrix}.
						\end{equation} 
					\end{thm}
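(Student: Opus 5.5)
The plan has two parts. First, recast NAG-C into a one-step template with step size $\tau=\sqrt{s}$, treating time as an extra state variable $t$. Then Taylor-expand $\Phi_{\rm nag\text{-}c}$ in $\tau$ and apply \cref{thm:unique-osr-agd} with $r=1$.

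\textbf{Step 1: the equivalent template.} I follow the proof of \cref{lem:general-ang} and \cref{coro:osr-nag-mu}. First I write \cref{eq:nag} in the gradient-correction form \cref{eq:nag-sc-equiv}, now with the variable $\beta_k=k/(k+3)$:
\[
x_{k+1}-x_k+s\nabla F(x_k)=\beta_k\big[x_k-x_{k-1}+s\nabla F(x_{k-1})\big]-s\beta_k\nabla F(x_k).
\]
Here $x_k$ plays the role of the extrapolated sequence. Next I introduce a scaled velocity, namely $x_k-x_{k-1}+s\nabla F(x_{k-1})$ divided by $\sqrt{s}$ and by a suitable momentum factor, in the spirit of \cref{eq:vk-nag}. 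I also set $t_k=c_k\sqrt{s}$ with an index shift $c_k$ (of the form $k+\text{const}$) chosen so that the momentum factor equals $1-3\sqrt{s}/t_k$.

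The key observation is that $1-3/(k+3)$ becomes $1-3\tau/t$ once $t$ carries the factor $\sqrt{s}$. The clock update $t_{k+1}=t_k+\tau$ then makes the map autonomous, with $\Phi(X,0)=X$. Substituting back should give the $v$-update $(1-3\tau/t)v-\tau\nabla F(x)$. The $x$-update should follow from $x_{k+1}=x_k-s\nabla F(x_k)+\sqrt{s}\,(\cdots)$, which produces the factors $\tau(1-3\tau/t)^2v$ and $\tau^2(2-3\tau/t)\nabla F(x)$ in \cref{eq:Phi-nag-c}.

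I expect this step to be the main obstacle. The index offset in $t_k$ must be chosen so that consecutive factors $\beta_k$ and $\beta_{k-1}$ both appear exactly as $1-3\tau/t$ evaluated at a single state $t$. If they do not align exactly, I will first try absorbing the discrepancy into the definition of $v_k$, dividing by the appropriate $\beta$. Only as a last resort would I accept a shift that affects terms of order $o(\tau^2)$, which would not alter the $O(\sqrt{s})$-resolution ODE.

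\textbf{Step 2: Taylor expansion and application of \cref{thm:unique-osr-agd}.} With $X=(x,v,t)$, expanding \cref{eq:Phi-nag-c} at $\tau=0$ gives
\[
\Phi_1(X)=\begin{bmatrix} v\\ -3v/t-\nabla F(x)\\ 1\end{bmatrix},\qquad
\Phi_2(X)=\begin{bmatrix} -12v/t-4\nabla F(x)\\ 0\\ 0\end{bmatrix}.
\]
Hence $\Gamma_0=\Phi_1$, which is the first bracket in \cref{eq:osr-nag-c}. Next, $\Gamma_1=\tfrac12\Phi_2-\tfrac12\nabla\Gamma_0\,\Gamma_0$. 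The Jacobian of $\Gamma_0$ with respect to $(x,v,t)$ has middle row $\big(-\nabla^2F(x),\,-3I/t,\,3v/t^2\big)$. This gives
\[
\nabla\Gamma_0\,\Gamma_0=\begin{bmatrix} -3v/t-\nabla F(x)\\ -\nabla^2F(x)v+12v/t^2+3\nabla F(x)/t\\ 0\end{bmatrix}.
\]
Then $\Gamma_1=\tfrac12\big(-9v/t-3\nabla F(x),\ \nabla^2F(x)v-12v/t^2-3\nabla F(x)/t,\ 0\big)$, which matches \cref{eq:osr-nag-c}.

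The required smoothness of $\Phi$ holds for $t$ bounded away from zero, since $F\in\mathcal F^2$. Therefore \cref{thm:unique-osr-agd} applies, and the resulting ODE is the unique $O(\sqrt{s})$-resolution ODE with respect to this template.
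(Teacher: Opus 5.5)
Your Step 2 is correct and is the same computation as the paper's: same $\Phi_1$, $\Phi_2$, Jacobian and $\Gamma_1$. The gap is Step 1. You leave it at ``should give'', and the one concrete prescription you give is the wrong one.

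Your displayed recursion carries $\beta_k$. The gradient points $y_k=x_k+\beta_k(x_k-x_{k-1})$ of \cref{eq:nag} actually satisfy it with $\beta_{k+1}$. You also require the momentum used when leaving the state with clock $t_k$ to equal $1-3\sqrt{s}/t_k$. Together these place the clock one step off, and then \cref{eq:Phi-nag-c} is not recovered even to second order.

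To see this, take momentum exactly $1-3\tau/t_k$ and $v_k=[x_k-x_{k-1}+s\nabla F(x_{k-1})]/(\tau c_k)$. Matching the $x$-component $\tau(1-3\tau/t)^2v$ at order $\tau^2$ forces $c_k=1-3\tau/t_k+O(\tau^2)$. Hence $c_k/c_{k+1}=1-3\tau^2/t_k^2+O(\tau^3)$, and the true $v$-update is $v^+=(1-3\tau/t-3\tau^2/t^2)v-\tau\nabla F(x)+O(\tau^3)$. So $\Phi_2$ acquires the $v$-component $-6v/t^2$, and $\Gamma_1$ gets the $v$-component $\tfrac12\big(\nabla^2F(x)v-18v/t^2-3\nabla F(x)/t\big)$. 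That is a different ODE.

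Neither of your fallbacks repairs this. Rescaling $v$ by a factor $1+O(\tau^2)$ changes the map only at $O(\tau^3)$. The discrepancy here is $O(\tau^2)$, not $o(\tau^2)$.

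The missing idea is to read the correct identification off the template itself. Any sequence generated by \cref{eq:Phi-nag-c} satisfies
\[
x_{k+1}-x_k+s\nabla F(x_k)=\frac{\gamma_k^2}{\gamma_{k-1}}\big[x_k-x_{k-1}+s\nabla F(x_{k-1})\big]-s\gamma_k\nabla F(x_k),\qquad \gamma_k:=1-\frac{3\sqrt{s}}{t_k}.
\]
NAG has a single coefficient in both places, so exact equivalence is impossible when the momentum varies. Since $\gamma_k^2/\gamma_{k-1}=1-3\sqrt{s}/t_{k+1}+O(s^{3/2})$, the momentum used when leaving clock $t_k$ must be $1-3\sqrt{s}/t_{k+1}$.

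Concretely, take the template's $x_k$ to be $y_k$, which satisfies
$y_{k+1}-y_k+s\nabla F(y_k)=\beta_{k+1}[y_k-y_{k-1}+s\nabla F(y_{k-1})]-s\beta_{k+1}\nabla F(y_k)$.
Set $t_k=(k+3)\sqrt{s}$ and $v_k=[y_k-y_{k-1}+s\nabla F(y_{k-1})]/(\sqrt{s}\beta_k)$, which is the paper's choice. Then:
\begin{itemize}
\item $v_{k+1}=\beta_kv_k-\sqrt{s}\nabla F(y_k)$ holds exactly.
\item Because $\beta_{k+1}-\beta_k=3s/(t_kt_{k+1})$, $y_{k+1}$ differs from the $x$-component of $\Phi_{\rm nag\text{-}c}(X_k,\sqrt{s})$ by $\sqrt{s}\beta_k(\beta_{k+1}-\beta_k)v_k-s(\beta_{k+1}-\beta_k)\nabla F(y_k)=O(s^{3/2})$.
\end{itemize}
This leaves $\Phi_1$ and $\Phi_2$, hence $\Gamma_0$ and $\Gamma_1$, unchanged, and your Step 2 then finishes the proof. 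The paper asserts exact equivalence with this choice, but it holds only up to this $O(s^{3/2})$ term. That is harmless for the $O(\sqrt{s})$-resolution ODE.
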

					\begin{proof}
						It is not hard to verify the equivalent template \cref{eq:Phi-nag-c} by letting $t_k=(k+3)\sqrt{s}$ and $v_k = \left[x_k-x_{k-1}+s\nabla F(x_{k-1})\right]/(\sqrt{s}-3s/t_k)$. It is clear that $\Phi_{\rm nag\text{-}c}(X,0) = X$. 	Consider the the Taylor expansion of $\Phi_{\rm nag\text{-}c}(X,\tau)$
						at $\tau = 0$:
						\[
						\Phi_{\rm nag\text{-}c}(X,\tau)=\Phi_0(X)+\tau\Phi_1(X)+ \frac{\tau^2}{2}\Phi_2(X)+o(\tau^2) ,
						\]
						where $	\Phi_0(X)=X$ and 
						\[
						\Phi_1(X)= \begin{bmatrix}  
							v \\-3/tv-\nabla F(x)\\1
						\end{bmatrix},\,\Phi_2(X)=\begin{bmatrix}
							-4\nabla F(x)-12/tv\\0\\0 
						\end{bmatrix}.
						\]
						Thanks to \cref{thm:unique-osr-agd}, it follows that $\Gamma_0(X)=H_{1,0}(X)=\Phi_1(X)$ and 
						\[\small
						\begin{aligned}
							{}&				\Gamma_1(X) = \frac{1}{2}\Phi_2(X)-\frac{1}{2}H_{2,0}(X)=
							\frac{1}{2}\Phi_2(X)-\frac{1}{2}\nabla H_{1,0}(X)H_{1,0}(X)\\
							={}&\frac12\begin{bmatrix}
								-4\nabla F(x)-12/tv\\0\\0
							\end{bmatrix}-\frac{1}{2}\begin{bmatrix}
								O&I&O\\- \nabla^2F(x)&-3/tI&3/t^2v\\O&O&O
							\end{bmatrix}\begin{bmatrix}
								v \\-3/tv-\nabla F(x)\\1
							\end{bmatrix}\\
								={}&\frac{1}{2}\begin{bmatrix}
									-9/tv-3\nabla F(x)\\\nabla^2 F(x)v-12/t^2v-3/t\nabla F(x)\\0
								\end{bmatrix}.
							\end{aligned}
							\]
							This leads to \cref{eq:osr-nag-c}                                                                                                                                                                                                                                                                                                                                                                                                                                                                                                                                                                                                                                                                                                                                                                                                                                                                                                                                                                                                                                                                                                                                                                                                                                                                                                                                                                                                                                                                                                                                                                                                                                                                                                                                                                                                                                                                                                                                                                                                                                                                                                                                                                                                                                                                                                                                                                                                                                                                                                                                                                                                                                                                                                                                                                                                                                                                                                                                                                                                                                                                                                                                                                                                                                                                                                                                                                                                                                                                                                                                                                                                                                                                                                                                                                                                                                                                                                                                                                                      and finishes the proof.
						\end{proof}
						
						\begin{rem}
							Thanks to \cref{thm:osr-nag-c}, the $O(1)$-resolution ODE of NAG-C coincides with the low-resolution model \cref{eq:avd}, and the $O(\sqrt{s})$-resolution ODE is  
							\begin{equation} 
								\label{eq:os1-nag-c}\small
								x''  +\left( \frac{3}{t}+\frac{6\sqrt{s}}{t^2} +\sqrt{s}\nabla^2F(x)\right) x' 
								+ \left(1 + \frac{3\sqrt{s}}{2t}+\frac{9s}{4t^2}\right)\nabla F(x)  - \dfrac{3s}{4} \nabla^2 F(x) \nabla F(x)= 0.
							\end{equation} 
							Recall the high-resolution ODE \cref{eq:os-ode-nag-c} derived in \cite[Eq.(1.12)]{shi_understanding_2021}:
							\begin{equation}\label{eq:os1-nag-c-shi}
								x'' + \left( \frac{3}{t} + \sqrt{s} \nabla^2 F(x) \right) x' 
								+ \left(1 + \frac{3\sqrt{s}}{2t} \right) \nabla F(x) = 0.
							\end{equation}
							The difference is  $\frac{6\sqrt{s}}{t^2}x' + O(s)$. We claim that our model \cref{eq:os1-nag-c} is more accurate than \cref{eq:os1-nag-c-shi}. As $t$ grows, these two ODEs become increasingly aligned; see \cref{fig:nag-c-ode-compare}. 
							\begin{figure}[H]
								\centering  
								{
									\begin{minipage}[t]{0.4\textwidth}
										\centering          
										\includegraphics[width=0.9\textwidth]{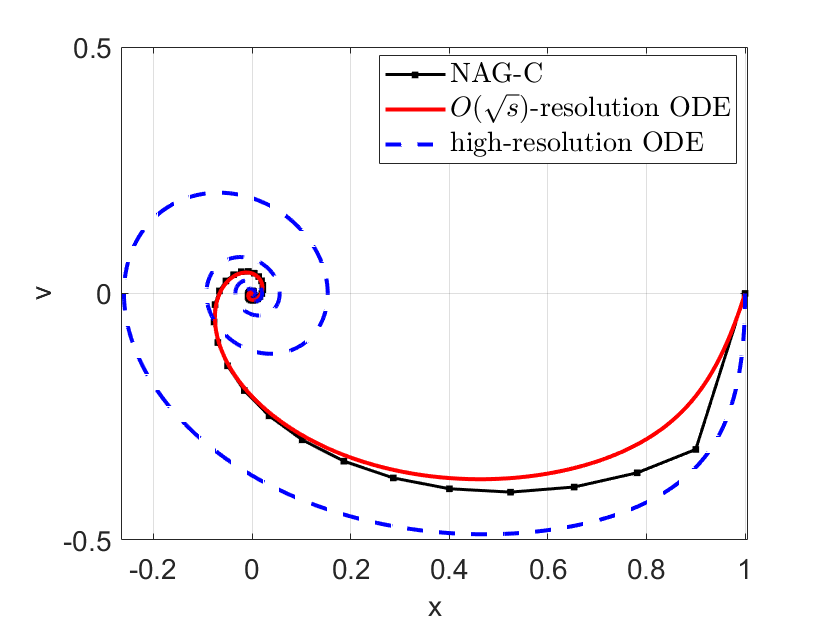}   
									\end{minipage}%
								}
								\centering  
								{
									\begin{minipage}[t]{0.4\textwidth}
										\centering          
										\includegraphics[width=0.9\textwidth]{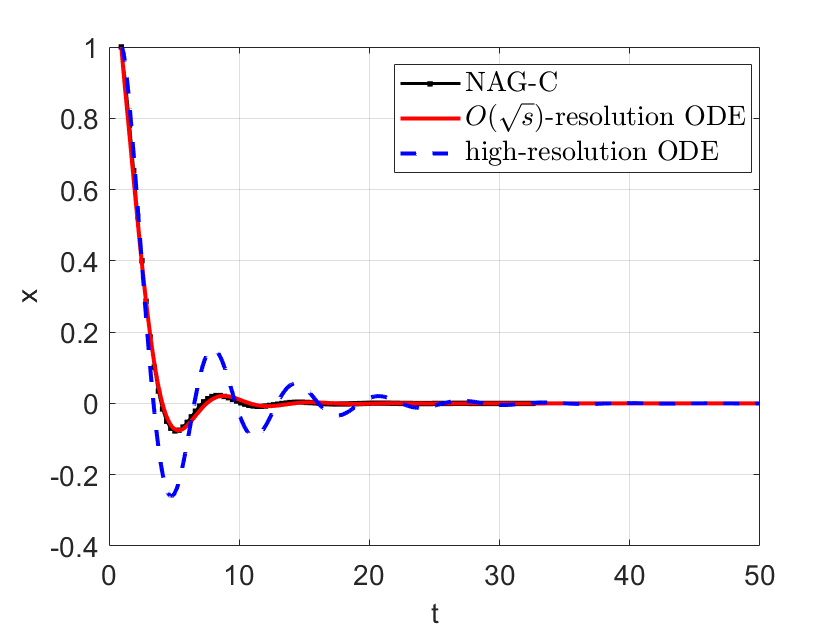}   
									\end{minipage}%
								}
								\caption{Illustration of the behaviors of NAG-C  and  the high-resolution ODEs \cref{eq:os1-nag-c,eq:os1-nag-c-shi} with the step size $s=0.1$. The objective function is  $F(x)=1/2x^2$ for $x\in \R$.} 
								\label{fig:nag-c-ode-compare}  
							\end{figure} 
						\end{rem}
						
						As the final example, let us look at the accelerated mirror descent (AMD) \cite[Eq.(3.11)]{2003Smooth}. Recall that $\varphi\in \mathcal S_1^1(\R^n)$ is a given prox-function.
						\begin{thm}\label{thm:osr-amd}
							Assume $F\in\mathcal F^2(\R^n)$. The $O(\sqrt{s})$-resolution ODE of the	accelerated mirror descent:
							\begin{equation}\label{eq:amd}\tag{AMD}\small
								\left\{
								\begin{aligned}
									&y_{k+1} = \mathop{\argmin}\limits_{y \in \mathbb{R}^d} \left\{ s \langle \nabla F(x_k), y - x_k \rangle + \frac{1}{2} \|y - x_k\|^2 \right\}\\
									&z_{k+1} = \mathop{\argmin}\limits_{z\in \mathbb{R}^d} \left\{ \frac{1}{  s} \varphi(z) + \sum_{i=0}^{k} \frac{i + 1}{2} [ F(x_i) + \langle \nabla F(x_i), z - x_i \rangle ] \right\} \\
									&x_{k+1} = \frac{2}{k + 3} z_{k+1} + \frac{k + 1}{k + 3} y_{k+1}
								\end{aligned}
								\right.
							\end{equation}
							with respect to the equivalent template $ 	X_{k+1} = \Phi_{\rm amd}(X_k,\sqrt{s})$ where $X_k=(x_k,z_k,t_k)$ and 
							\begin{equation}\label{eq:Phi-amd}\small
								\Phi_{\rm amd}(X,\tau):=\begin{bmatrix}
									(1-\frac{2\tau}{t})x+\frac{2\tau}{t}\nabla \varphi^*(z -  \tau (t/2-\tau) \nabla F(x)) -\tau^2(1-\frac{2\tau}{t})\nabla F(x)\\
									z -  \tau (t/2-\tau) \nabla F(x)\\
									t+\tau 
								\end{bmatrix}
							\end{equation} 
							is given by
							\begin{equation} \small
								\label{eq:osr-amd}
								\begin{bmatrix}
									x\\z\\ t
								\end{bmatrix}'=\begin{bmatrix}  
									\frac{2}{t}(\nabla\varphi^*(z)-x)\\-  t/2\nabla F(x)\\1
								\end{bmatrix}+\frac{\sqrt{s}}{2}\begin{bmatrix}
									-  \nabla^2\varphi^*(z)\nabla F(x)-2\nabla F(x)+6/t^2(\nabla \varphi^*(z)-x)\\\frac{5 }{2} \nabla F(x)+ \nabla^2F(x)(\nabla\varphi^*(z)-x)\\0
								\end{bmatrix}.
							\end{equation} 
						\end{thm}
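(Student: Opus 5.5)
The plan has two parts, following the pattern of \cref{thm:osr-nag-c}. First we verify that \cref{eq:amd} is equivalent to the template $X_{k+1}=\Phi_{\rm amd}(X_k,\sqrt{s})$ with $\Phi_{\rm amd}(X,0)=X$. Then we apply \cref{thm:unique-osr-agd} with $r=1$:
\[
\Gamma_0=\Phi_1,\qquad \Gamma_1=\tfrac12\Phi_2-\tfrac12\nabla\Gamma_0(X)\Gamma_0(X).
\]

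For the equivalence, the variable $z$ in the template is the \emph{dual} (mirror) variable and not the $z_{k+1}$ of \cref{eq:amd}. The optimality condition of the second subproblem gives $\nabla\varphi(z_{k+1})=-s\sum_{i=0}^k\frac{i+1}{2}\nabla F(x_i)$. We therefore set $w_{k+1}:=-s\sum_{i=0}^k\frac{i+1}{2}\nabla F(x_i)$, so that $z_{k+1}=\nabla\varphi^*(w_{k+1})$ and
\[
w_{k+1}=w_k-s\tfrac{k+1}{2}\nabla F(x_k).
\]
With $t_k=(k+3)\sqrt{s}$ and $\tau=\sqrt{s}$ we have $\tau(t_k/2-\tau)=s(k+1)/2$, which produces the second row of \cref{eq:Phi-amd} (with $w$ renamed $z$). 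Also $y_{k+1}=x_k-s\nabla F(x_k)$, $\frac{2}{k+3}=\frac{2\tau}{t_k}$ and $\frac{k+1}{k+3}=1-\frac{2\tau}{t_k}$. Substituting these into the third line of \cref{eq:amd} gives the first row of \cref{eq:Phi-amd}, and $t_{k+1}=t_k+\tau$ gives the last row. Setting $\tau=0$ clearly returns $X$.

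Next comes the Taylor expansion in $\tau$. Write $\psi(\tau)=z-\tau(t/2-\tau)\nabla F(x)$, so that $\psi(0)=z$, $\psi'(0)=-\tfrac t2\nabla F(x)$ and $\psi''(0)=2\nabla F(x)$. Expanding the first row,
\[
x-\tfrac{2\tau}{t}x+\tfrac{2\tau}{t}\nabla\varphi^*(\psi(\tau))-\tau^2\nabla F(x)+O(\tau^3),
\]
gives
\[
\Phi_1=\Bigl(\tfrac2t(\nabla\varphi^*(z)-x),\;-\tfrac t2\nabla F(x),\;1\Bigr)
\]
and
\[
\Phi_2=\Bigl(-2\nabla^2\varphi^*(z)\nabla F(x)-2\nabla F(x),\;2\nabla F(x),\;0\Bigr).
\]
Hence $\Gamma_0=\Phi_1$, which is the $O(1)$ part of \cref{eq:osr-amd}.

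For $\Gamma_1$ we compute the $3\times3$ block Jacobian of $\Gamma_0$ with respect to $(x,z,t)$:
\[
\nabla\Gamma_0=\begin{bmatrix}
-\tfrac2tI & \tfrac2t\nabla^2\varphi^*(z) & -\tfrac2{t^2}(\nabla\varphi^*(z)-x)\\
-\tfrac t2\nabla^2F(x) & O & -\tfrac12\nabla F(x)\\
O & O & O
\end{bmatrix}.
\]
Multiplying by $\Gamma_0$ gives the following.
\begin{itemize}
\item First row: $-\tfrac{4}{t^2}(\nabla\varphi^*(z)-x)-\nabla^2\varphi^*(z)\nabla F(x)-\tfrac{2}{t^2}(\nabla\varphi^*(z)-x)$.
\item Second row: $-\nabla^2F(x)(\nabla\varphi^*(z)-x)-\tfrac12\nabla F(x)$.
\end{itemize}
Forming $\tfrac12\Phi_2-\tfrac12\nabla\Gamma_0\Gamma_0$ then yields the bracket in \cref{eq:osr-amd}:
\begin{itemize}
\item first row: $-\nabla^2\varphi^*\nabla F-2\nabla F+\tfrac6{t^2}(\nabla\varphi^*-x)$, up to the overall factor $\tfrac12$;
\item second row: $\tfrac52\nabla F+\nabla^2F(\nabla\varphi^*-x)$;
\item third row: $0$.
\end{itemize}

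The main obstacle is bookkeeping rather than ideas. One must correctly identify the dual variable and the time shift $t_k=(k+3)\sqrt{s}$ so that the coefficient $\tau(t/2-\tau)$ matches $s(k+1)/2$ exactly. One must also not forget the $\partial_t$ column of the Jacobian, since the $-\tfrac2{t^2}$ and $-\tfrac12\nabla F$ contributions it produces account for the $6/t^2$ and $5/2$ coefficients in \cref{eq:osr-amd}. I would carry out this step first, checking it against the scalar case $\varphi=\tfrac12\|\cdot\|^2$.
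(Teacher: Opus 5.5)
Your proposal is correct and follows the paper's proof essentially step by step: the same equivalent template in the mirror (dual) variable with $t_k=(k+3)\sqrt{s}$, the same Taylor coefficients, and the same block Jacobian of $\Gamma_0$ including the $\partial_t$ column. The paper takes the dual recursion from Yuan--Zhang, whereas you derive it from the optimality condition. Your second component $2\nabla F(x)$ of $\Phi_2$ is the right one; the paper's printed $\nabla F(x)$ is a typo, since only $2\nabla F(x)$ produces the coefficient $\tfrac52$ in \cref{eq:osr-amd}.
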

						\begin{proof}
							Following \cite[Section 3.1]{Yuan2024AnalyzeAM}, it has been proved that
							\begin{equation}
								\left\{
								\begin{aligned}
									&z_{k+1}  =z_k -  s \frac{k + 1}{2} \nabla F(x_k), \\
									&x_{k+1} =\frac{2}{k+3} \nabla \varphi^*(z_{k+1}) + \frac{k+1}{k+3}  \left[ x_k - s \nabla F(x_k) \right].
								\end{aligned}
								\right.\nonumber
							\end{equation}
							Setting $t_k=(k+3)\sqrt{s}$ yields the equivalent form \cref{eq:Phi-amd}. Note that $\Phi_{\rm amd}(X,0) = X$.
							Consider the the Taylor expansion of $\Phi_{\rm amd}(X,\tau)$
							at $\tau = 0$:
							\[
							\Phi_{\rm amd}(X,\tau)=\Phi_0(X)+\tau\Phi_1(X)+ \frac{\tau^2}{2}\Phi_2(X)+o(\tau^2) ,
							\]
							where $	\Phi_0(X)=X$ and 
							\[
							\Phi_1(X)= \begin{bmatrix}  
								2/t\nabla \varphi^*(z)-2/ax \\-  t/2\nabla F(x)\\1
							\end{bmatrix},\,\Phi_2(X)=\begin{bmatrix}
								-2  \nabla^2\varphi^*(z)\nabla F(x)-2\nabla F(x)\\  \nabla F(x)\\0
							\end{bmatrix}.
							\]
							Thanks to \cref{thm:unique-osr-agd}, it follows that $\Gamma_0(X)=H_{1,0}(X)=\Phi_1(X)$ and 
							\[\small
							\begin{aligned}
								{}&				\Gamma_1(X) = \frac{1}{2}\Phi_2(X)-\frac{1}{2}H_{2,0}(X)=
								\frac{1}{2}\Phi_2(X)-\frac{1}{2}\nabla H_{1,0}(X)H_{1,0}(X)\\ 
								={}&\frac{1}{2}\Phi_2(X)-\frac{1}{2}\begin{bmatrix}
									-\frac{2}{t}&\frac{2}{t}\nabla^2\varphi^*(z)&-\frac{2}{t^2}(\nabla\varphi^*(z)-x)\\ -\frac{  t}{2}\nabla^2F(x)&O&-\frac{1}{2}\nabla F(x)\\O&O&O
								\end{bmatrix}\begin{bmatrix}
									\frac{2}{t}(\nabla \varphi^*(z)-x) \\-  t/2\nabla F(x)\\1
								\end{bmatrix}\\
								={}&\frac{1}{2}\begin{bmatrix}
									-  \nabla^2\varphi^*(z)\nabla F(x)-2\nabla F(x)+6/t^2(\nabla \varphi^*(z)-x)\\\frac{5}{2} \nabla F(x)+ \nabla^2F(x)(\nabla\varphi^*(z)-x)\\0
								\end{bmatrix}.
							\end{aligned}
							\]
							This leads to \cref{eq:osr-amd}                                                                                                                                                                                                                                                                                                                                                                                                                                                                                                                                                                                                                                                                                                                                                                                                                                                                                                                                                                                                                                                                                                                                                                                                                                                                                                                                                                                                                                                                                                                                                                                                                                                                                                                                                                                                                                                                                                                                                                                                                                                                                                                                                                                                                                                                                                                                                                                                                                                                                                                                                                                                                                                                                                                                                                                                                                                                                                                                                                                                                                                                                                                                                                                                                                                                                                                                                                                                                                                                                                                                                                                                                                                                                                                                                                                                                                                                                                                                                                                      and finishes the proof.
						\end{proof}
						\begin{rem}
							By \cref{eq:osr-amd}, the $O(1)$-resolution ODE for \cref{eq:amd} reads as 
							\[
							x'   = \frac{2}{t}\left(\nabla \varphi^*(z)-x\right),\quad
							z'  = -\frac{t }{2}\nabla F(x),
							\]
							which coincides with the low-resolution ODE of \cref{eq:amd} derived in \cite{krichene_accelerated_2015}. However, we note that our $O(\sqrt{s})$-resolution ODE \cref{eq:osr-amd} differs from the high-resolution ODE of \cref{eq:amd} in \cite{Yuan2024AnalyzeAM}:
							\begin{equation}
								x'   = \frac{2}{t}\left(\nabla \varphi^*(z)-x\right)- \sqrt{s}\nabla F(x),\quad
								z'  = -\frac{t }{2}\nabla F(x), 
							\end{equation} 
							which is between our $O(1)$-resolution ODE and $O(\sqrt{s})$-resolution ODE.
						\end{rem}

\section{PDHG with $O(s)$-Correction}\label{sec:ode to dta-pdhg}
\subsection{Correction for continuous-time PDHG}
We follow the $O(s)$-correction idea from \cite{lu_osr-resolution_2022} and treat the $O(s)$-resolution ODE \cref{eq:osr-pdhg} of CP as a correction to the $O(1)$-resolution ODE \cref{eq:pdhg-o1}. To avoid the second order derivative $\nabla M(Z)$, we drop the Hessian terms $\nabla^2 F$ and $\nabla^2 G$ and consider the following $O(s)$-correction PDHG ODE:
\begin{equation}\label{eq:os-cor-pdhg}
	Z' =\mathcal G(Z):= -M(Z)+\frac{s}{2}\left[\eta_1 Q+2\eta_2QI_\theta\right]M(Z),
\end{equation}
where $Q$ and $I_\theta$ are defined in \cref{eq:ppa-pdhg} and $\eta_1,\eta_2>0$ are weight parameters. In component wise, letting $Z=(x,y)$ yields that
\begin{equation}\label{eq:CP-G}
	\left\{
	\begin{aligned}
		x'={}&-  \nabla_x\mathcal{L}(x,y)-\frac{s}{2}(\eta_1-2\eta_2 )A^\top\nabla_y\mathcal{L}(x,y) ,\\
		y'={}& \nabla_y\mathcal{L}(x,y)-\frac{s}2(\eta_1+2\theta\eta_2)A\nabla_x\mathcal{L}(x,y). 
	\end{aligned}
	\right.
\end{equation} 
\begin{rem}
	The original $O(s)$-correction idea for GDA by Lu \cite{lu_osr-resolution_2022} utilizes the difference between the $O(s)$-terms of \cref{eq:osr-gda,eq:osr-ppm} and considered the model $Z' = \nabla M(Z)M(Z)$. 
	The explicit discretization $Z_{k+1} = Z_k+s\nabla M(Z_k)M(Z_k)$ is called the Jacobian Method (JM).  Note that our $O(s)$-correction ODE \cref{eq:os-cor-pdhg} is Hessian-free and the discrete scheme \cref{eq:ex-osr-pdhg} does not involve any second-order information either. Furthermore, even if a spectrum argument for the case of a bilinear saddle point function $L(x,y)=y^\top Bx$ has shown the stability and convergence behavior of JM, the convergence for general saddle problems remains unclear.
\end{rem}

Introduce the following Lyapunov function
\begin{equation}\label{eq:lyapunov}
	\mathcal{E}(Z):=\frac{1}{2}\nm{Z-Z^*}^2=\frac{1}{2}\left\| x-x^*\right\| ^2+\frac{1}{2}\left\| y-y^*\right\| ^2,\quad\forall\,Z = (x,y)\in\R^n\times\R^m,
\end{equation} 
where $Z^*=(x^*,y^*)\in M^{-1}(0)$. To establish the convergence rate, we shall verify the strong Lyapunov property. The key is the following lower bound of a cross term.
\begin{lem}\label{lem:f-bound}
	Suppose $F\in\mathcal F_{L_{f}}^1(\R^n)$ and $G\in \mathcal{F}_{L_g}^{1}(\R^m)$. Then for any $R\in\mathbb S_{m+n}^+$, we have 
	\begin{equation}\label{eq:cross-term}
		\begin{aligned}
			{}&\snm{\dual{QR(H(Z_1)-H(Z_2)),Z_1-Z_2}}\\\geq{}& -\frac{\nm{R}}{2s}\dual{M(Z_1)-M(Z_2),Z_1-Z_2}-\frac{sL}{2}\nm{Q(Z_1-Z_2)}_{R}^2,
		\end{aligned}
	\end{equation}
	for all $Z_1,\,Z_2\in\R^{m+n}$, where $L:=\max\{L_f,L_g\}$ and 
	\begin{equation}\label{eq:H}
		H(Z):= \begin{bmatrix}
			\nabla F(x)\\\nabla G(y)
		\end{bmatrix},\quad \forall\,Z = (x,y)\in\R^{m+n}.
	\end{equation}
\end{lem}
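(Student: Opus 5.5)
Since the right-hand side of \cref{eq:cross-term} is nonpositive (the monotonicity of $M$ shows this, see below), I will prove the stronger inequality in which the absolute value on the left is removed, i.e.\ a lower bound for the signed cross term $\dual{QR(H(Z_1)-H(Z_2)),Z_1-Z_2}$. The absolute-value version then follows from $\snm{a}\geq a$. Throughout, write $\Delta Z:=Z_1-Z_2$ and $\Delta H:=H(Z_1)-H(Z_2)$, and use the weighted norm $\nm{w}_R^2=\dual{Rw,w}$ for $R\in\mathbb S^+_{m+n}$.

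\emph{Step 1 (move $Q$ across).} From \cref{eq:ppa-pdhg}, $Q$ is skew-symmetric: $Q^\top=-Q$. Hence
\[
\dual{QR\Delta H,\Delta Z}=-\dual{R\Delta H,Q\Delta Z}.
\]
The same skew-symmetry gives $\dual{Q\Delta Z,\Delta Z}=0$. Since $M(Z)=H(Z)+QZ$ by the definitions of $M$, $Q$ and \cref{eq:H}, this yields the identity
\[
\dual{M(Z_1)-M(Z_2),\Delta Z}=\dual{\Delta H,\Delta Z}\geq 0.
\]
The inequality uses the convexity of $F$ and $G$, and it confirms that the right-hand side of \cref{eq:cross-term} is nonpositive.

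\emph{Step 2 (weighted Cauchy--Schwarz and Young).} Because $R$ is positive semidefinite, Cauchy--Schwarz in the $R$-inner product and Young's inequality with weight $sL$ give
\[
\dual{R\Delta H,Q\Delta Z}\leq \nm{\Delta H}_R\nm{Q\Delta Z}_R\leq \frac{1}{2sL}\nm{\Delta H}_R^2+\frac{sL}{2}\nm{Q\Delta Z}_R^2 .
\]
The second term is already the desired $\frac{sL}{2}\nm{Q(Z_1-Z_2)}_R^2$.

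\emph{Step 3 (co-coercivity).} First bound $\nm{\Delta H}_R^2\leq\nm{R}\,\nm{\Delta H}^2$. Then apply \cref{eq:low-bound} separately to $F$ (constant $L_f\le L$) and to $G$ (constant $L_g\le L$), and add the two estimates:
\[
\nm{\Delta H}^2\leq L\dual{\Delta H,\Delta Z}=L\dual{M(Z_1)-M(Z_2),\Delta Z},
\]
where the equality is the identity from Step 1. Substituting into Step 2 gives
\[
\dual{QR\Delta H,\Delta Z}\geq -\frac{\nm{R}}{2s}\dual{M(Z_1)-M(Z_2),\Delta Z}-\frac{sL}{2}\nm{Q\Delta Z}_R^2,
\]
which is \cref{eq:cross-term}.

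The only delicate point is bookkeeping. One must replace $\dual{\Delta H,\Delta Z}$ by $\dual{M(Z_1)-M(Z_2),\Delta Z}$ using the skew-symmetry of $Q$. One must also pick the Young weight $sL$ so that the constants come out exactly as $\nm{R}/(2s)$ and $sL/2$. I do not expect any genuine obstacle.
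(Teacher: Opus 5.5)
Your proof is correct. It has the same skeleton as the paper's: move $Q$ across using $Q^\top=-Q$, apply Young's inequality, and close with \cref{eq:low-bound} and $\dual{M(Z_1)-M(Z_2),\Delta Z}=\dual{\Delta H,\Delta Z}$. The genuine difference is the weighting. The paper applies Young with the matrix weight $D=\diag{L_fI,L_gI}$ to the pair $R^{1/2}\Delta H$, $R^{1/2}Q^\top\Delta Z$, then uses the $D^{-1}$-weighted co-coercivity $\nm{\Delta H}_{D^{-1}}^2\leq\dual{\Delta H,\Delta Z}$ and $\nm{D}\leq L$, arriving at the same constants as you. Its intermediate step $\nm{R^{1/2}\Delta H}_{D^{-1}}^2\leq\nm{R}\,\nm{\Delta H}_{D^{-1}}^2$ amounts to $R^{1/2}D^{-1}R^{1/2}\preceq\nm{R}D^{-1}$. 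That holds when $R$ commutes with $D$, as for the block-diagonal $R$ in Lemma~\ref{thm:con-slp}. It fails for an arbitrary $R\in\mathbb S^+_{m+n}$ once $L_f\neq L_g$: with $D=\diag{1,100}$, $R=\frac12\begin{bmatrix}1&1\\1&1\end{bmatrix}$ and $\Delta H=(0,1)$, the two sides are $101/400$ and $1/100$. Your scalar weight $sL$, combined with $\nm{\Delta H}_R^2\leq\nm{R}\,\nm{\Delta H}^2$ and $\nm{\Delta H}^2\leq L\dual{\Delta H,\Delta Z}$, keeps $R$ and the Lipschitz constants apart. So your route proves the lemma in the full generality stated, at no cost in the constants.

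One further remark on the sign. As you noticed, the right-hand side is nonpositive, so the inequality as displayed (with $\snm{\cdot}$ on the left) is in fact immediate. What Lemma~\ref{thm:con-slp} actually needs is the upper bound $\dual{QR\Delta H,\Delta Z}\leq\frac{\nm{R}}{2s}\dual{M(Z_1)-M(Z_2),\Delta Z}+\frac{sL}{2}\nm{Q\Delta Z}_R^2$. That is the opposite sign from the one you wrote down. Your Cauchy--Schwarz step bounds $\snm{\dual{R\Delta H,Q\Delta Z}}$, so it gives that direction as well.
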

\begin{proof}
	Recall that $	M(Z)=H(Z)+QZ$. 
	It follows from the Cauchy-Schwarz inequality  that 
	\[
	\begin{aligned}
		{}&		 	\snm{	\dual{QR(H(Z_1)-H(Z_2)),Z_1-Z_2}}
		=		 	\snm{	\dual{R^{1/2}(H(Z_1)-H(Z_2)),R^{1/2}Q^\top(Z_1-Z_2)}}\\
		\geq {}&-	\frac{1}{2s}\nm{R^{1/2}(H(Z_1)-H(Z_2))}^2_{D^{-1}}-	\frac{s}{2}\nm{R^{1/2}Q^\top(Z_1-Z_2)}_{D}^2\\
		\geq {}&-	\frac{\nm{R}}{2s}\nm{H(Z_1)-H(Z_2)}_{D^{-1}}^2-\frac{s\nm{D}}{2}\nm{Q^\top(Z_1-Z_2)}_{R}^2,		 				 		 		
	\end{aligned}
	\]
	where $D = \diag{L_fI,L_hI}$ and $\nm{D}\leq L$.
	Then using \cref{eq:low-bound} gives
	\begin{equation}\label{eq:H-H}
		\begin{aligned}
			\nm{H(Z_1)-H(Z_2)}_{D^{-1}}^2=	{}&\frac{1}{L_f}\|\nabla F(x_1)-\nabla F(x_2)\|^2 +\frac{1}{L_h}\|\nabla G(y_1)-\nabla G(y_2)\|^2\\
			\leq	{}	& \left\langle \nabla F(x_1)-\nabla F(x_2),x_1-x_2\right\rangle + \left\langle \nabla G(y_1)-\nabla G(y_2),y_1-y_2\right\rangle \\
			={}&\dual{M(Z_1)-M(Z_2),Z_1-Z_2}.
		\end{aligned}
	\end{equation}
	Since $\nm{D}\leq L$ and $Q^\top=-Q$, this leads to \cref{eq:cross-term} and completes the proof.
\end{proof}
We then verify the strong Lyapunov property of $\mathcal E$ with respect to \cref{eq:os-cor-pdhg}.
\begin{lem}\label{thm:con-slp}
	Suppose $F\in\mathcal F_{L_{f}}^1(\R^n)$ and $G\in \mathcal{F}_{L_g}^{1}(\R^m)$. Assume $\theta\geq -1$ and let $\eta_1,\,\eta_2\in\R_+$ be such that $2\eta_2< \eta_1<4-2\theta\eta_2$. If $0< s<2/L$ with $L:=\max\{L_f,L_g\}$, then we have
	\begin{equation}\label{eq:slp-pdhg}
		-\dual{\nabla \mathcal{E}(Z), \mathcal{G}(Z)}\geq{} C_1(\eta_1,\eta_2,\theta)\left\langle M(Z),Z-Z^*\right\rangle +C_2(\eta_1,\eta_2,s)\nm{Q(Z-Z^*)}^2,
	\end{equation}
	for all $Z = (x,y)\in\R^n\times\R^m$, where $C_1(\eta_1,\eta_2,\theta):=\frac{4-\eta_1-2\theta \eta_2}{4}>0$ and $C_2(\eta_1,\eta_2,s):=\frac{s(2-sL)}{4}(\eta_1-2\eta_2)>0$.
\end{lem}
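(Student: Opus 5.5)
The approach is a direct expansion of $-\dual{\nabla\mathcal E(Z),\mathcal G(Z)}$ around the saddle point, using $M(Z^*)=0$ and the splitting $M=H+Q$ from the proof of \cref{lem:f-bound}. Set $W:=Z-Z^*=(x-x^*,y-y^*)$ and $\Delta H:=H(Z)-H(Z^*)$. Then $M(Z)=\Delta H+QW$, and since $\nabla\mathcal E(Z)=W$ we get
\[
-\dual{\nabla \mathcal E(Z),\mathcal G(Z)}=\dual{M(Z),W}-\frac{s}{2}\dual{W,(\eta_1Q+2\eta_2QI_\theta)(\Delta H+QW)}.
\]
I would split the second term into a \emph{quadratic part}, involving $QW$ only, and a \emph{cross part}, involving $\Delta H$, and treat them separately. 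Throughout I use $Q^\top=-Q$, which also gives $\dual{W,QW}=0$, so that $\dual{M(Z),W}=\dual{\Delta H,W}\ge 0$.

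\textbf{Quadratic part.} Antisymmetry gives $\dual{W,Q^2W}=-\nm{QW}^2$, so the $\eta_1$-contribution is $+\frac{s\eta_1}{2}\nm{QW}^2$. For the $\eta_2$-contribution, write $W=(u,w)$ and compute explicitly:
\[
\dual{W,QI_\theta QW}=\nm{Au}^2-\theta\nm{A^\top w}^2 .
\]
Since $\theta\ge-1$, this is at most $\nm{Au}^2+\nm{A^\top w}^2=\nm{QW}^2$. Hence the quadratic part is bounded below by $\frac{s}{2}(\eta_1-2\eta_2)\nm{QW}^2$. This is exactly where the hypotheses $\theta\ge -1$ and $\eta_1>2\eta_2$ enter.

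\textbf{Cross part.} Write it as $-\frac s2\dual{QR\,\Delta H,W}$ with
\[
R:=\eta_1I+2\eta_2I_\theta=\diag{(\eta_1+2\theta\eta_2)I,(\eta_1-2\eta_2)I}.
\]
This $R$ is positive semidefinite because $\eta_1>2\eta_2$ and $\eta_1+2\theta\eta_2\ge\eta_1-2\eta_2>0$ (the latter again from $\theta\ge-1$). Applying \cref{lem:f-bound} with $Z_1=Z$ and $Z_2=Z^*$, and using $\nm R=\eta_1+2\theta\eta_2$, gives
\[
-\frac{s}{2}\dual{QR\,\Delta H,W}\ \ge\ -\frac{\eta_1+2\theta\eta_2}{4}\dual{M(Z),W}-\frac{s^2L}{4}\nm{QW}_R^2 .
\]
Combined with $\dual{M(Z),W}$, the first piece produces exactly the coefficient $C_1=\frac{4-\eta_1-2\theta\eta_2}{4}$, which is positive by the assumption $\eta_1<4-2\theta\eta_2$.

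\textbf{Main obstacle.} What remains is to absorb $\frac{s^2L}{4}\nm{QW}_R^2$ into the quadratic gain so as to leave $\frac{s(2-sL)}{4}(\eta_1-2\eta_2)\nm{QW}^2$. This requires $\nm{QW}_R^2\le(\eta_1-2\eta_2)\nm{QW}^2$, which fails for a general $R$ because its first block weight $\eta_1+2\theta\eta_2$ is the larger one. The plan I would try first is to refine the Cauchy--Schwarz step inside \cref{lem:f-bound}, weighting the $\Delta H$ factor by $R$ and the $Q^\top W$ factor by the identity. Since $R^{1/2}Q^\top W$ only pairs the block weights $\eta_1+2\theta\eta_2$ with $A^\top w$ and $\eta_1-2\eta_2$ with $Au$, this should let the two blocks be estimated separately. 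If the resulting constants still do not close, I would use the spare quadratic gain to trade the $\eta_2$-contribution against the slack in $\dual{W,QI_\theta QW}\le\nm{QW}^2$, handling the $x$- and $y$-blocks separately. In either variant, the final condition $s<2/L$ is what guarantees $C_2>0$.
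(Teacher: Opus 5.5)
Up to the coefficient $C_1$ your argument is the paper's: the same splitting $M(Z)=\Delta H+QW$, the same $R=\eta_1I+2\eta_2I_\theta$, and the same use of \cref{lem:f-bound}. The gap is the final step. You leave it open, and the obstacle you describe is one you created yourself. You replaced the quadratic part by the lower bound $\frac s2(\eta_1-2\eta_2)\nm{QW}^2$ before combining it with the error $-\frac{s^2L}{4}\nm{QW}_R^2$. After that weakening the absorption really does fail. Sharpening the Young step inside \cref{lem:f-bound} does not rescue it. Even the sharpest blockwise bound compatible with $C_1$ leaves an error $\frac{s^2L_f}{4}(\eta_1+2\theta\eta_2)\nm{A^\top w}^2$ on the $A^\top w$ block. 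Your retained gain on that block is only $\frac s2(\eta_1-2\eta_2)\nm{A^\top w}^2$, which is too small for the stated $C_2$ when $\theta>-1$ and $L_f=L$. So, as written, the $C_2$-term is not proved.

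The missing observation is that the quadratic part is exactly the $R$-weighted norm. Since $\eta_1Q+2\eta_2QI_\theta=QR$ and $Q^\top=-Q$,
\[
-\frac s2\dual{W,QRQW}=\frac s2\nm{QW}_R^2=\frac s2\Big[(\eta_1+2\theta\eta_2)\nm{A^\top w}^2+(\eta_1-2\eta_2)\nm{Au}^2\Big].
\]
This is precisely what your own expression $\frac{s\eta_1}{2}\nm{QW}^2-s\eta_2(\nm{Au}^2-\theta\nm{A^\top w}^2)$ equals. The slack you discarded is $s\eta_2(1+\theta)\nm{A^\top w}^2$, which is what your second fallback was reaching for. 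Keep it and combine before estimating:
\[
\frac s2\nm{QW}_R^2-\frac{s^2L}{4}\nm{QW}_R^2=\frac{s(2-sL)}{4}\nm{QW}_R^2\geq\frac{s(2-sL)}{4}(\eta_1-2\eta_2)\nm{QW}^2 .
\]
The inequality uses $s<2/L$ and $R\succeq(\eta_1-2\eta_2)I$, the latter valid because $\theta\geq-1$. This is the paper's argument, and it needs no modification of \cref{lem:f-bound}.
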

\begin{proof}
	From \cref{eq:os-cor-pdhg,eq:H}, we have
	\begin{equation}\label{eq:decomp-G}
		\mathcal G(Z) = -M(Z)+\frac{s}{2}Q(\eta_1I+2\eta_2I_\theta)M(Z)=
		-M(Z)+\frac{s}{2}QRH(Z)
		-\frac{s}{2}Q^\top RQZ,
	\end{equation}
	where $R=\eta_1I+2\eta_2I_\theta=\diag{(\eta_1+2\theta\eta_2)I,(\eta_1-2\eta_2)I}$.
	Since $Z^*\in M^{-1}(0)$,  it holds that $\mathcal G(Z^*)=0$ and
	\[
	\begin{aligned}
		{}		&-\dual{\nabla \mathcal{E}(Z),\mathcal{G}(Z)}= -\dual{Z-Z^*,\mathcal G(Z)-\mathcal G(Z^*)}\\
		={}&\dual{M(Z),Z-Z^*}+\frac{s}{2}\nm{Q(Z-Z^*)}^2_R-\frac{s}{2}\dual{QR(H(Z)-H(Z^*)),Z-Z^*}.
	\end{aligned}
	\]
	We then apply \cref{lem:f-bound} to  get
	\[\small
	-\frac{s}{2}\dual{QR(H(Z)-H(Z^*)),Z-Z^*}\geq
	-\frac{\nm{R}}{4}\dual{M(Z),Z-Z^*}-\frac{s^2L}{4}\nm{Q(Z-Z^*)}_{R}^2,
	\]
	which implies 
	\[
	\begin{aligned}
		{}&		-\dual{\nabla \mathcal{E}(Z),\mathcal{G}(Z)}
		\geq \frac{4-\nm{R}}{4}\dual{M(Z),Z-Z^*}+\frac{s(2-sL)}{4}\nm{Q(Z-Z^*)}^2_R\\
		\geq {}&\frac{4-\eta_1-2\theta\eta_2}{4}\dual{M(Z),Z-Z^*}+\frac{s(2-sL)}{4}(\eta_1-2\eta_2)\nm{Q(Z-Z^*)}^2.
	\end{aligned}
	\]
	This finishes the proof of this lemma.
\end{proof}
\begin{thm}
	Suppose $F\in\mathcal F_{L_{f}}^1(\R^n)$ and $G\in \mathcal{F}_{L_g}^{1}(\R^m)$. There exists a unique global $C^1$-smooth solution $Z(t) = (x(t),y(t))$ to the $O(s)$-correction PDHG ODE \cref{eq:os-cor-pdhg} with $Z(0)=Z_0=(x_0,y_0)\in\R^{m+n}$. Assume $\theta\geq -1$ and let $\eta_1,\,\eta_2\in\R_+$ be such that $2\eta_2< \eta_1<4-2\theta\eta_2$. If $0< s<2/L$ with $L:=\max\{L_f,L_g\}$, then we have 
	\begin{equation}\label{eq:rate-mod-pdhg-osr}
		\begin{aligned}
			\mathcal E(Z(t))+{}&C_1(\eta_1,\eta_2,\theta)\int_{0}^{t} \left\langle M(Z(r)),Z(r)-Z^*\right\rangle\dd r \\
			{}&\quad +C_2(\eta_1,\eta_2,s)\int_{0}^{t}\nm{Q(Z(r)-Z^*)}^2\dd r\leq \mathcal E(Z(0)),
		\end{aligned}
	\end{equation}
	for all $t>0$, where both $C_1$ and $C_2$ are defined in \cref{eq:slp-pdhg}. As by products, we get the ergodic rates
	\begin{align}
		\nm{Q(\bar{Z}(t)-Z^*)}^2\leq  {}&\frac {\mathcal E(Z_0)+C_2(\eta_1,\eta_2,s)\nm{Q(Z_0-Z^*)}^2}{C_2(\eta_1,\eta_2,s)(1+t)},
		\label{eq:est1-rate-mod-pdhg-osr}\\
		\mathcal L(\bar{x}(t),y^*)-	 \mathcal L(x^*,\bar{y}(t))\leq{}& \frac{\mathcal E(Z_0) +C_1(\eta_1,\eta_2,\theta)\left[\mathcal L(x_0,y^*)-	 \mathcal L(x^*,y_0)\right]}{	C_1(\eta_1,\eta_2,\theta)(1+t)},
		\label{eq:est2-rate-mod-pdhg-osr}
	\end{align}
	where 
	\[
	\begin{aligned}
		\bar{x}(t) :={}&\frac{x_0+\int_{0}^{t}x(r)\dd r}{1+t},\quad 
		\bar{y}(t):={}\frac{y_0+\int_{0}^{t}y(r)\dd r}{1
			+t}.
	\end{aligned}
	\]
	Moreover, if $A$ is invertible, then $\lambda_{\min}(Q^\top Q)=\sigma_{\min}^2(A)>0$ and we have the exponential rate
	\begin{equation}\label{eq:exp-pdhg}
		\frac{1}{2}\left\| x(t)-x^*\right\| ^2+\frac{1}{2}\left\| y(t)-y^*\right\| ^2\leq\mathcal E(Z_0)e^{-2tC_2(\eta_1,\eta_2,s)\sigma_{\min}^2(A)}.
	\end{equation}
\end{thm}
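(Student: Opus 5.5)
Existence and uniqueness come first. The vector field $\mathcal G(Z)=-M(Z)+\frac{s}{2}[\eta_1Q+2\eta_2QI_\theta]M(Z)$ is globally Lipschitz, because $M(Z)=H(Z)+QZ$ with $H$ Lipschitz (constants $L_f,L_g$) and $Q,I_\theta$ bounded linear maps. The Picard--Lindel\"of theorem then gives a unique global $C^1$ solution. For the energy estimate \cref{eq:rate-mod-pdhg-osr}, I differentiate $\mathcal E(Z(t))$ along the trajectory: $\frac{d}{dt}\mathcal E(Z(t))=\dual{\nabla\mathcal E(Z),\mathcal G(Z)}$. The strong Lyapunov property of \cref{thm:con-slp} bounds this by $-C_1\dual{M(Z),Z-Z^*}-C_2\nm{Q(Z-Z^*)}^2$, and integrating over $[0,t]$ gives \cref{eq:rate-mod-pdhg-osr}. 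Both integrands are nonnegative, since $M$ is monotone and $M(Z^*)=0$, so each integral is bounded by $\mathcal E(Z_0)$ divided by the corresponding constant.

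For \cref{eq:est1-rate-mod-pdhg-osr}, I apply Jensen's inequality to the convex map $Z\mapsto\nm{Q(Z-Z^*)}^2$. The average $\bar Z(t)$ is a convex combination with weight $1/(1+t)$ on $Z_0$ and density $1/(1+t)$ on $Z(r)$, $r\in[0,t]$. Hence
\[
(1+t)\nm{Q(\bar Z(t)-Z^*)}^2\le \nm{Q(Z_0-Z^*)}^2+\int_0^t\nm{Q(Z(r)-Z^*)}^2\dd r,
\]
and bounding the integral by $\mathcal E(Z_0)/C_2$ gives the claim.

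For \cref{eq:est2-rate-mod-pdhg-osr}, I use the standard convex--concave inequality
\[
\dual{M(Z),Z-Z^*}\ge \mathcal L(x,y^*)-\mathcal L(x^*,y).
\]
It follows from convexity of $x\mapsto\mathcal L(x,y^*)$ and concavity of $y\mapsto\mathcal L(x^*,y)$: the bilinear terms $\dual{A^\top y,x-x^*}-\dual{Ax,y-y^*}$ combine exactly with $\dual{y^*,A(x-x^*)}$ and $\dual{y-y^*,Ax^*}$. The gap function $(x,y)\mapsto\mathcal L(x,y^*)-\mathcal L(x^*,y)$ is jointly convex, so Jensen with the same averaging weights applies. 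Combining it with the bound on $\int_0^t\dual{M(Z),Z-Z^*}\dd r$ from \cref{eq:rate-mod-pdhg-osr} gives the stated $O(1/(1+t))$ rate, with the initial gap contributing the extra numerator term.

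Finally, suppose $A$ is invertible (so $m=n$). Then $Q^\top Q=\diag{A^\top A,AA^\top}$, hence $\nm{Q(Z-Z^*)}^2\ge\sigma_{\min}^2(A)\nm{Z-Z^*}^2=2\sigma_{\min}^2(A)\mathcal E(Z)$. Dropping the nonnegative $C_1$ term in the differential inequality leaves $\frac{d}{dt}\mathcal E\le -2C_2\sigma_{\min}^2(A)\mathcal E$, and Gronwall's inequality yields \cref{eq:exp-pdhg}.

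The only step needing real care is the gap inequality for $\dual{M(Z),Z-Z^*}$ together with the convexity needed for Jensen. I would verify it by expanding $\mathcal L$ explicitly. The rest is routine given \cref{thm:con-slp}.
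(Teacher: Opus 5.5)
Your proposal is correct and follows essentially the same route as the paper. Both integrate the strong Lyapunov inequality of \cref{thm:con-slp}, apply Jensen's inequality together with $\dual{M(Z),Z-Z^*}\geq\mathcal L(x,y^*)-\mathcal L(x^*,y)$ for the ergodic rates, and use $\lambda_{\min}(Q^\top Q)=\sigma_{\min}^2(A)$ with Gronwall for the exponential rate. You also make explicit several points the paper leaves implicit: global Lipschitz continuity of $\mathcal G$ for well-posedness, nonnegativity of the integrands via monotonicity of $M$, joint convexity of the gap function, and the Jensen computation for the $\nm{Q(\bar Z(t)-Z^*)}^2$ bound, which the paper dismisses with ``similarly''.
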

\begin{proof}
	A standard argument of the well-posedness theory of ordinary differential equations leads to the existence of a unique global $C^1$-smooth solution. Thanks to \cref{thm:con-slp}, 
	\begin{equation}\label{eq:mid-exp}
		\begin{aligned}
			\frac{\rm{d}}{\rm{d}t}\mathcal{E}(Z)={}&\dual{\nabla \mathcal E(Z),Z'}=\dual{\nabla \mathcal{E}(Z) ,\mathcal{G}(Z)} \\
			\leq {}	&-C_1(\eta_1,\eta_2,\theta)\left\langle M(Z),Z-Z^*\right\rangle -C_2(\eta_1,\eta_2,s)\nm{Q(Z-Z^*)}^2.
		\end{aligned}
	\end{equation}
	This implies immediately that
	\[
	\begin{aligned}
		\mathcal E(Z(t))+{}&C_1(\eta_1,\eta_2,\theta)\int_{0}^{t} \left\langle M(Z(r)),Z(r)-Z^*\right\rangle\dd r \\
		{}&\quad +C_2(\eta_1,\eta_2,s)\int_{0}^{t}\nm{Q(Z(r)-Z^*)}^2\dd r\leq \mathcal E(Z_0),
	\end{aligned}
	\]
	which leads to the desired estimate \cref{eq:rate-mod-pdhg-osr}.
	
	Notice the fact 
	\[
	\begin{aligned}
		\dual{M(Z),Z-Z^*} 
		={}&\dual{\nabla F(x)+A^\top y,x-x^*}+\dual{\nabla G(y)-Ax,y-y^*}\\
		\geq {}&F(x)-F(x^*)+\dual{A^\top y,x-x^*}+G(y)-G(y^*)-\dual{Ax,y-y^*}\\
		= {}& \mathcal{L}(x,y^*)-\mathcal{L}(x^*,y).
	\end{aligned}
	\]
	Then by Jensen's inequality, it follows that 
	\[
	\begin{aligned}
		{}&		C_1(\eta_1,\eta_2,\theta)\left[ \mathcal L(\bar{x}(t),y^*)-	 \mathcal L(x^*,\bar{y}(t))\right]\\\leq {}&
		C_1(\eta_1,\eta_2,\theta)\frac{\mathcal L(x_0,y^*)-	 \mathcal L(x^*,y_0)+\int_{0}^{t}\left[\mathcal L(x(r),y^*)-	 \mathcal L(x^*,y(r))\right]\dd r }{1+t}\\
		\leq {}&
		C_1(\eta_1,\eta_2,\theta)\frac{\mathcal L(x_0,y^*)-	 \mathcal L(x^*,y_0)+\int_{0}^{t}\dual{M(Z(r)),Z(r)-Z^*}\dd r }{1+t}\\
		\leq {}&
		\frac{\mathcal E(Z_0) +C_1(\eta_1,\eta_2,\theta)\left[\mathcal L(x_0,y^*)-	 \mathcal L(x^*,y_0)\right]}{1+t}.
	\end{aligned}
	\]
	Similarly, we can prove that 
	\[
	C_2(\eta_1,\eta_2,s)\nm{Q(\bar{Z}(t)-Z^*)}^2\leq  \frac {\mathcal E(Z_0)+C_2(\eta_1,\eta_2,s)\nm{Q(Z_0-Z^*)}^2}{1+t}.
	\]
	Therefore, we finish the proofs of \cref{eq:est1-rate-mod-pdhg-osr,eq:est2-rate-mod-pdhg-osr}.
	
	If $\sigma_{\min}(A)>0$, then by \cref{eq:mid-exp},
	\[
	\begin{aligned}
		\frac{\rm{d}}{\rm{d}t}\mathcal{E}(Z) 
		\leq {}	& -C_2(\eta_1,\eta_2,s)\lambda_{\min}(Q^\top Q)\nm{Z-Z^*}^2=-2C_2(\eta_1,\eta_2,s)\sigma_{\min}^2(A)\mathcal E(Z).
	\end{aligned}
	\]
	This implies \cref{eq:exp-pdhg} immediately and concludes the proof of the theorem.
\end{proof}
\subsection{Correction for discrete-time PDHG}
Let us consider an explicit Euler discretization for the $O(s)$-correction ODE \cref{eq:os-cor-pdhg}:
\begin{equation}\label{eq:ex-osr-pdhg}
	\tag{cPDHG}
	Z_{k+1} = Z_k+s\mathcal G(Z_k)=Z_k-sM(Z_k)+\frac{s^2}{2}\left[\eta_1Q+2\eta_2Q_\theta\right]M(Z_k).
\end{equation}
In component wise, we have 
\[
\left\{
\begin{aligned}
	x_{k+1}={}&x_k-  s\nabla_x\mathcal{L}(x_k,y_k)-\frac{s^2}{2}(\eta_1-2\eta_2 )A^\top\nabla_y\mathcal{L}(x_k,y_k) ,\\
	y_{k+1}={}&y_k+s \nabla_y\mathcal{L}(x_k,y_k)-\frac{s^2}2(\eta_1+2\theta\eta_2)A\nabla_x\mathcal{L}(x_k,y_k). 
\end{aligned}
\right.
\]
Mimicking \cref{eq:lyapunov}, for $Z_k = (x_k, y_k)$,
introduce the discrete Lyapunov function
\[
\mathcal{E}(Z_k)=\frac{1}{2}\left\| x_k-x^*\right\| ^2+\frac{1}{2}\left\| y_k-y^*\right\| ^2.
\]

Using the strong Lyapunov property in \cref{thm:con-slp}, we give the rate of convergence.
\begin{thm}\label{thm:CP-ODE-proof}
	Suppose $F\in\mathcal F_{L_{f}}^1(\R^n)$ and $G\in \mathcal{F}_{L_g}^{1}(\R^m)$. Assume $\theta\geq -1$ and let $\eta_1,\,\eta_2\in\R_+$ be such that $2\eta_2< \eta_1<4-2\theta\eta_2$. If $s$ satisfies 
	\begin{equation}\label{eq:cond-s}\small
		0<s\leq \min\left\{\frac{1}{L},\,\frac{L^2C_1(\eta_1,\eta_2,\theta)}{4L^2+2\nm{Q}^2(\eta^2_1+4\theta^2\eta^2_2)},\,\frac{2L^2(\eta_1-2\eta_2)}{(\eta_1-2\eta_2)L^3+8(2L^2+\nm{Q}^2(\eta^2_1+4\theta^2\eta^2_2))}\right\},
	\end{equation}
	with $L:=\max\{L_f,L_g\}$, then we have 
	\begin{equation}\label{eq:diff-Ek-pdhg-osr}\small
		\mathcal{E}(Z_{k+1})-\mathcal{E}(Z_k)\leq {}
		-\frac{						C_1(\eta_1,\eta_2,\theta)}{2}\left\langle M(Z_k),Z_k-Z^*\right\rangle  -\frac{C_2(\eta_1,\eta_2,s)}{2}\nm{Q(Z_k-Z^*)}^2,
	\end{equation}
	where  both $C_1$ and $C_2$ are defined in \cref{eq:slp-pdhg}. As by products, we get the ergodic rates
	\begin{align}
		\nm{Q(\bar{Z}_k-Z^*)}^2\leq  {}&\frac {2\mathcal E(Z_0)+C_2(\eta_1,\eta_2,s)\nm{Q(Z_0-Z^*)}^2}{C_2(\eta_1,\eta_2,s)(1+k)},
		\label{eq:est1-rate-mod-pdhg-osr-dis}\\
		\mathcal L(\bar{x}_k,y^*)-	 \mathcal L(x^*,\bar{y}_k)\leq{}& \frac{2\mathcal E(Z_0) +C_1(\eta_1,\eta_2,\theta)\left[\mathcal L(x_0,y^*)-	 \mathcal L(x^*,y_0)\right]}{	C_1(\eta_1,\eta_2,\theta)(1+k)},
		\label{eq:est2-rate-mod-pdhg-osr-dis}
	\end{align}
	where 
	\[
	\begin{aligned}
		\bar{x}_k :={}&\frac{1}{1+k}\sum_{i=0}^{k}x_i,\quad 
		\bar{y}_k:={}\frac{1}{1
			+k}\sum_{i=0}^ky_i.
	\end{aligned}
	\]
	Moreover, if $A$ is an invertible square matrix, then $\lambda_{\min}(Q^\top Q)=\sigma_{\min}^2(A)>0$ and we have the linear rate
	\begin{equation}\label{eq:exp-pdhg-dis}
		\frac{1}{2}\left\| x_k-x^*\right\| ^2+\frac{1}{2}\left\| y_k-y^*\right\| ^2\leq\mathcal E(Z_0)\times\left(1-C_2(\eta_1,\eta_2,s)\sigma_{\min}^2(A)\right)^k.
	\end{equation}
\end{thm}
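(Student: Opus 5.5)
The plan is to treat \cref{eq:ex-osr-pdhg} as a forward Euler step for \cref{eq:os-cor-pdhg}. Since $\mathcal E$ is quadratic, the one-step difference splits exactly:
\[
\mathcal E(Z_{k+1})-\mathcal E(Z_k)=s\dual{\nabla\mathcal E(Z_k),\mathcal G(Z_k)}+\frac{s^2}{2}\nm{\mathcal G(Z_k)}^2 .
\]
The first term is controlled by the strong Lyapunov property of \cref{thm:con-slp}. It is at most $-sC_1\dual{M(Z_k),Z_k-Z^*}-sC_2\nm{Q(Z_k-Z^*)}^2$, and this needs only $s<2/L$, which follows from $s\le 1/L$. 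The task is therefore to show that the discretization error $\frac{s^2}{2}\nm{\mathcal G(Z_k)}^2$ consumes at most half of each of these two dissipation terms. This yields \cref{eq:diff-Ek-pdhg-osr}, where the factor $s$ is evidently absorbed into the normalization of the constants.

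To bound $\nm{\mathcal G(Z_k)}^2$, I would use $\mathcal G(Z^*)=0$ and the decomposition \cref{eq:decomp-G}, writing $\mathcal G(Z)=-(I-\tfrac s2 QR)(M(Z)-M(Z^*))$ with $R=\eta_1I+2\eta_2I_\theta$. Then $M(Z)-M(Z^*)=(H(Z)-H(Z^*))+Q(Z-Z^*)$.

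The two pieces are handled as follows:
\begin{itemize}
\item The $H$-part is controlled by cocoercivity, as in \cref{eq:H-H}: $\nm{H(Z)-H(Z^*)}^2\le L\dual{M(Z),Z-Z^*}$.
\item The $Q$-part is just $\nm{Q(Z-Z^*)}^2$.
\end{itemize}
Using $(a+b)^2\le 2a^2+2b^2$ and $\nm{QR}\le\nm{Q}\sqrt{\eta_1^2+4\theta^2\eta_2^2}$ (a crude bound matching the constants in \cref{eq:cond-s}), I expect
\[
\nm{\mathcal G(Z)}^2\le c\left(2+\frac{s^2}{2}\nm{Q}^2(\eta_1^2+4\theta^2\eta_2^2)\right)\left(L\dual{M(Z),Z-Z^*}+\nm{Q(Z-Z^*)}^2\right)
\]
for some absolute constant $c$. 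The second and third entries of the minimum in \cref{eq:cond-s} are exactly what make the $\dual{M,Z-Z^*}$ coefficient at most $C_1/2$ and the $\nm{Q(Z-Z^*)}^2$ coefficient at most $C_2/2$; here $C_2$ itself contains $s(2-sL)\ge s$. I expect this bookkeeping of constants to be the main obstacle. The step is mechanical, but the splits must be arranged so that the resulting quadratic-in-$s$ inequalities reduce precisely to \cref{eq:cond-s}, using $s\le 1/L$ to replace $s^2$ factors by $s/L$.

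The by-products follow as in the continuous case. Summing \cref{eq:diff-Ek-pdhg-osr} from $0$ to $k-1$ gives
\[
\frac{C_1}{2}\sum_i\dual{M(Z_i),Z_i-Z^*}+\frac{C_2}{2}\sum_i\nm{Q(Z_i-Z^*)}^2\le\mathcal E(Z_0).
\]
I then add the $i=0$ term, divide by $1+k$, and apply Jensen's inequality (convexity of $\nm{Q\cdot}^2$ and of $\mathcal L(\cdot,y^*)-\mathcal L(x^*,\cdot)$) together with $\dual{M(Z),Z-Z^*}\ge\mathcal L(x,y^*)-\mathcal L(x^*,y)$; this gives \cref{eq:est1-rate-mod-pdhg-osr-dis,eq:est2-rate-mod-pdhg-osr-dis}. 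For the linear rate, I drop the $C_1$ term (it is nonnegative by monotonicity) and use $\nm{Q(Z_k-Z^*)}^2\ge\sigma_{\min}^2(A)\cdot 2\mathcal E(Z_k)$. This gives $\mathcal E(Z_{k+1})\le(1-C_2\sigma_{\min}^2(A))\mathcal E(Z_k)$, and iterating yields \cref{eq:exp-pdhg-dis}.
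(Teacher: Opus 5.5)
Your route is the paper's route: exact quadratic expansion of $\mathcal E$, the strong Lyapunov property of \cref{thm:con-slp} for the first-order term, \cref{eq:decomp-G} plus cocoercivity for $\nm{\mathcal G(Z_k)}^2$, then absorption. Your identity is correct, including the factor $s$ in front of $\dual{\nabla\mathcal E(Z_k),\mathcal G(Z_k)}$. The gap is the sentence saying this $s$ is ``evidently absorbed into the normalization of the constants''. It is not: $C_1,C_2$ are fixed by \cref{eq:slp-pdhg}, so at best you get the right-hand side of \cref{eq:diff-Ek-pdhg-osr} multiplied by $s$. Even that fails at the $\nm{Q(Z_k-Z^*)}^2$ term. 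There the dissipation is $sC_2=\frac{s^2(2-sL)(\eta_1-2\eta_2)}{4}$, which is of the same order $s^2$ as the error $\frac{s^2}{2}\nm{\mathcal G(Z_k)}^2$, so no smallness condition on $s$ helps. Your bound (with $c=2$, which is what $(a+b)^2\le 2a^2+2b^2$ gives) charges at least $2s^2\nm{Q(Z_k-Z^*)}^2$. On the other hand $\frac s2 C_2<s^2$, because $\theta\ge-1$ and $\eta_1<4-2\theta\eta_2$ force $\eta_1-2\eta_2<4$. Even the leading part $\frac{s^2}{2}\nm{Q(Z_k-Z^*)}^2$ of the error, which is attained when $F=G=0$, exceeds $\frac s2C_2\nm{Q(Z_k-Z^*)}^2$ whenever $\eta_1-2\eta_2\le 2$. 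The paper's proof closes only through a slip: it writes $\dual{\nabla\mathcal E(Z_k),Z_{k+1}-Z_k}=\dual{\nabla\mathcal E(Z_k),\mathcal G(Z_k)}$ without the factor $s$, while keeping $\frac{s^2}{2}\nm{\mathcal G(Z_k)}^2$. The conditions in \cref{eq:cond-s} are exactly what you get by comparing $Ls^2\bigl(2+s^2\nm{Q}^2(\eta_1^2+4\theta^2\eta_2^2)\bigr)$ and $s^2\bigl(2+s^2\nm{Q}^2(\eta_1^2+4\theta^2\eta_2^2)\bigr)$ with $C_1/2$ and $C_2/2$.

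No bookkeeping can rescue the argument, because \cref{eq:diff-Ek-pdhg-osr} is false as stated. Take $F=G=0$, which lie in $\mathcal F^1_L$ for every $L>0$, so that $M(Z)=QZ$. With $u=Q(Z_k-Z^*)$ and $R=\eta_1I+2\eta_2I_\theta$, your identity gives exactly $\mathcal E(Z_{k+1})-\mathcal E(Z_k)=-\frac{s^2}{2}\nm{u}_R^2+\frac{s^2}{2}\nm{(I-\tfrac s2QR)u}^2=O(s^2)\nm{u}^2$, whereas the claimed bound is $-\frac{C_2}{2}\nm{u}^2$, of order $s$. Concretely, take $n=m=1$, $A=1$, $L=1$, $\theta=1$, $\eta_1=3/2$, $\eta_2=1/12$ (the paper's experimental parameters), $s=0.05$ (admissible, since the minimum in \cref{eq:cond-s} is $3/44$) and $Z_0-Z^*=(1,0)$. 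Then the left side is about $-4.2\times10^{-4}$ and the right side about $-1.6\times10^{-2}$. Worse, the iteration matrix here is $\begin{bmatrix}1-s^2r_2/2&-s\\ s&1-s^2r_1/2\end{bmatrix}$ with $r_1=\eta_1+2\theta\eta_2$ and $r_2=\eta_1-2\eta_2$. Its eigenvalues are complex for small $s$ and its determinant is $1+s^2\bigl(1-\frac{r_1+r_2}{2}\bigr)+\frac{s^4r_1r_2}{4}$. So the admissible choice $\theta=0$, $\eta_1=1/2$, $\eta_2=1/10$ makes the scheme diverge, contradicting \cref{eq:exp-pdhg-dis}.

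Two things are missing. First, a condition on the weights (e.g.\ $R\succ I$) so that the $O(s^2)$ dissipation $\frac{s^2}{2}\nm{u}_R^2$ beats the $O(s^2)$ error $\frac{s^2}{2}\nm{u}^2$. Second, an exact expansion of $\nm{(H(Z_k)-H(Z^*))+u}^2$ with Young's inequality on the cross term, instead of $(a+b)^2\le2a^2+2b^2$. What one can then prove is a decrease $-c\,s\dual{M(Z_k),Z_k-Z^*}-c'\,s^2\nm{Q(Z_k-Z^*)}^2$. This yields ergodic rates $O(1/(sk))$ and $O(1/(s^2k))$ and a linear factor $1-O(s^2\sigma_{\min}^2(A))$, not the stated ones.
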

\begin{proof}
	Since $\mathcal{E}(\cdot)$ is quadratic and 1-strongly convex, it is clear that
	\[
	\mathcal{E}(Z_{k+1})-\mathcal{E}(Z_k)=\left\langle \nabla\mathcal{E}(Z_k),Z_{k+1}-Z_k\right\rangle +\frac{1}{2}\left\| Z_{k+1}-Z_k\right\| ^2 .
	\]
	Invoking the explicit scheme \cref{eq:ex-osr-pdhg}  and the strong Lyapunov property (cf. \cref{thm:con-slp}), we obtain 
	\begin{equation}\label{eq:76}
		\begin{aligned}
			{}&	\left\langle \nabla\mathcal{E}(Z_k),Z_{k+1}-Z_k\right\rangle=	\left\langle \nabla\mathcal{E}(Z_k),\mathcal G(Z_k)\right\rangle\\
			\leq {}&
			-C_1(\eta_1,\eta_2,\theta)\left\langle M(Z_k),Z_k-Z^*\right\rangle -C_2(\eta_1,\eta_2,s)\nm{Q(Z_k-Z^*)}^2,
		\end{aligned}
	\end{equation}
	where both $C_1$ and $C_2$ are defined in \cref{eq:slp-pdhg}. By the decomposition $M(Z_k)=H(Z_k)+QZ_k$ and the estimate \cref{eq:H-H}, we claim that
	\[
	\begin{aligned}
		\nm{M(Z_k)-M(Z^*)}^2\leq{}& 2\nm{H(Z_k)-H(Z^*)}^2+2\nm{Q(Z_k-Z^*)}^2\\
		\leq{}&2L\dual{M(Z_k),Z_k-Z^*}+2\nm{Q(Z_k-Z^*)}^2.
	\end{aligned}
	\]
	Hence, it follows from \cref{eq:decomp-G} that 
	\[
	\begin{aligned}
		{}&	 \frac{1}{2}\left\| Z_{k+1}-Z_k\right\| ^2=\frac{s^2}{2}\left\| \mathcal G(Z_k)\right\| ^2
		=\frac{s^2}{2}\left\| \mathcal G(Z_k)-\mathcal G(Z^*)\right\| ^2\\
		={}&
		\frac{s^2}{2}\left\| \left(I-\frac{s}{2}\left[\eta_1Q+2\eta_2Q_\theta\right]\right)\left(M(Z_k)-M(Z^*)\right)\right\| ^2\\
		\leq{}&	 \frac{s^2}{2}\left\| I-\frac{s}{2}\left[\eta_1Q+2\eta_2Q_\theta\right]\right\| ^2
		\left\| M(Z_k)-M(Z^*)\right\| ^2\\
		\leq{}&	s^2\left(2+s^2\nm{Q}^2(\eta^2_1+4\theta^2\eta^2_2)\right)\left(L\dual{M(Z_k),Z_k-Z^*}+\nm{Q(Z_k-Z^*)}^2\right).
	\end{aligned}
	\]
	Thus, we obtain 
	\[
	\begin{aligned}
		\mathcal{E}(Z_{k+1})-\mathcal{E}(Z_k)\leq &
		-\left[
		C_1(\eta_1,\eta_2,\theta)-Ls^2\left(2+s^2\nm{Q}^2(\eta^2_1+4\theta^2\eta^2_2)\right)\right]\left\langle M(Z_k),Z_k-Z^*\right\rangle \\
		&\quad -\left[C_2(\eta_1,\eta_2,s)-s^2\left(2+s^2\nm{Q}^2(\eta^2_1+4\theta^2\eta^2_2)\right)\right]\nm{Q(Z_k-Z^*)}^2.
	\end{aligned}
	\]
	By \cref{eq:cond-s} , we find that 
	\[
	\begin{aligned}
		Ls^2\left(2+s^2\nm{Q}^2(\eta^2_1+4\theta^2\eta^2_2)\right)\leq{}& \frac{s}{L^2}\left(2L^2+\nm{Q}^2(\eta^2_1+4\theta^2\eta^2_2)\right)\leq \frac{			C_1(\eta_1,\eta_2,\theta)}{2},
	\end{aligned} 
	\]
	and similarly, we have
	\[
	s^2\left(2+s^2\nm{Q}^2(\eta^2_1+4\theta^2\eta^2_2)\right)\leq  \frac{			C_2(\eta_1,\eta_2,\theta)}{2}.
	\]
	Consequently, we obtain the contraction estimate \cref{eq:diff-Ek-pdhg-osr}.
	
	Following \cref{eq:est1-rate-mod-pdhg-osr,eq:est2-rate-mod-pdhg-osr}, it is not hard to establish \cref{eq:est1-rate-mod-pdhg-osr-dis,eq:est2-rate-mod-pdhg-osr-dis}. Moreover, if $\sigma_{\min}(A)>0$, then from \cref{eq:diff-Ek-pdhg-osr}, 
	\[\small
	\mathcal{E}(Z_{k+1})-\mathcal{E}(Z_k)\leq {}  -\frac{C_2(\eta_1,\eta_2,s)}{2}\lambda_{\min}(Q^\top Q)\nm{Z_k-Z^*}^2=-C_2(\eta_1,\eta_2,s)\sigma^2_{\min}(A)\mathcal E(Z_k),
	\]
	which yields the linear rate \cref{eq:exp-pdhg-dis} and  completes the proof of the theorem.
\end{proof}

\section{HB with $O(\sqrt{s})$-Correction}\label{sec:ode to dta-hb}
In this section, we extend the high-resolution term correction idea to the well-known \cref{eq:hb} method. As mentioned in \cref{sec:intro-low}, with careful choice of parameters, \cref{eq:hb} converges with provable suboptimal rate; however, for well-chosen parameters that are optimal for quadratic problems, \cref{eq:hb} might diverge for general  smooth strongly convex objectives. Observing \cref{eq:osr-hb-polyak,eq:osr-nag-mu}, the $O(1)$-resolution ODEs of \cref{eq:hb,eq:nag} are the same. Therefore, the reason why \cref{eq:hb} diverges while \cref{eq:nag} converges with optimal rate lies in the subtle $O(\sqrt{s})$-term. This naturally suggests an $O(\sqrt{s})$-correction for both the continuous and discrete \cref{eq:hb} based on the high-resolution ODE \cref{eq:osr-nag-mu} of \cref{eq:nag}. 
\subsection{Correction for continuous-time HB}
Again, to avoid using the Hessian information, we replace $\nabla^2 F(x)$ with  $\mu I$ and consider the following $O(\sqrt{s})$-correction ODE:
\begin{equation}\label{eq:os-correc-hb}
	\begin{bmatrix}
		x \\ v
	\end{bmatrix}' =  
	\begin{bmatrix}
		v \\ -2\sqrt{\mu} v - \nabla F(x)
	\end{bmatrix}  + 
	\frac{	\eta \sqrt{s} }{2} \begin{bmatrix}
		-6\sqrt{\mu}v - 3\nabla F(x)  \\
		\mu v - 2\sqrt{\mu} \nabla F(x)
	\end{bmatrix},
\end{equation}
where $\eta>0$ is a weight parameter. Letting $w:=v/\sqrt{\mu}+x$ yields an equivalent presentation 
\begin{equation}\label{eq:chb}\small
	\begin{bmatrix}
		x \\ w
	\end{bmatrix}' = \mathcal G(x,w),\quad \mathcal G(x,w):=
	\begin{bmatrix}
		\sqrt{\mu}(1-3\eta \sqrt{\mu s})(w-x)- \frac{3}{2}\eta\sqrt{s}\nabla F(x)\\
		\frac{\sqrt{\mu}}{2}(2+5\eta\sqrt{\mu s})(x-w)-\frac{1}{2\sqrt{\mu }}(2+ 5\eta\sqrt{\mu s})\nabla F(x)
	\end{bmatrix}.
\end{equation} 

In what follows, we present a Lyapunov analysis of the continuous model \cref{eq:chb}. The  Lyapunov function is given as below
\begin{equation}\label{eq:hbLyapunov}
	\mathcal{E} (x,w):= F(x)-F(x^*) +\frac{b}{2}\left\| w-x^*\right\| ^2,
\end{equation}
where $b:=\mu (1-3\eta\sqrt{\mu s})/(1+5\eta \sqrt{\mu s}/2)$.
The exponential decay is established via the strong Lyapunov property.
\begin{thm}
	Assume $\eta >0$ and $F\in \mathcal{S}_{\mu,L}^{1}(\R^n)$ with $L\geq\mu>0$. There exists a unique global $C^1$-smooth solution $ (x(t),w(t))$ to the $O(\sqrt{s})$-correction HB ODE \cref{eq:chb} with $(x(0),w(0))=(x_0,w_0)\in\R^n\times\R^n$. Moreover, if $3\eta\sqrt{\mu s}<1$, then we have the strong Lyapunov property
	\begin{equation}\label{eq:chb-slp}
		-	\dual{\nabla \mathcal{E}(x,w), \mathcal{G}(x,w)}\geq \sqrt{\mu }(1-3\eta \sqrt{\mu s})\mathcal{E}(x,w)+\frac{3 \eta \sqrt{s}}{2}\left\| \nabla F(x)\right\|^2,
	\end{equation}
	for all $(x,w)\in\R^n\times\R^n$. This implies that 
	\begin{equation}\label{eq:exp-chb-slp}
		\mathcal{E} (x(t),w(t))\leq e^{-\sqrt{\mu }(1-3\eta \sqrt{\mu s})t}\mathcal{E} (x_0,w_0),\quad \forall\,t>0.
	\end{equation}
\end{thm}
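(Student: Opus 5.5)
The plan is a direct computation of $-\dual{\nabla\mathcal E,\mathcal G}$, followed by a three-point identity and strong convexity. Well-posedness comes first and is routine. Since $F\in\mathcal S^1_{\mu,L}$, the map $\nabla F$ is $L$-Lipschitz, so $\mathcal G$ in \cref{eq:chb} is globally Lipschitz. The Picard--Lindel\"of theorem together with the standard continuation argument then gives a unique global $C^1$ solution.

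For the strong Lyapunov property, introduce the shorthand $\alpha:=1-3\eta\sqrt{\mu s}\in(0,1]$ and $\gamma:=2+5\eta\sqrt{\mu s}\geq 2$. With this notation $b=2\mu\alpha/\gamma$, and in particular $b\gamma/2=\mu\alpha$. The gradient of $\mathcal E$ is
\[
\nabla\mathcal E(x,w)=\big(\nabla F(x),\,b(w-x^*)\big).
\]
Expanding the pairing with $\mathcal G$ gives
\[
-\dual{\nabla F(x),\mathcal G_x}=-\sqrt\mu\,\alpha\dual{\nabla F(x),w-x}+\tfrac32\eta\sqrt s\nm{\nabla F(x)}^2,
\]
and, using $b\gamma/2=\mu\alpha$,
\[
-b\dual{w-x^*,\mathcal G_w}=\mu^{3/2}\alpha\dual{w-x^*,w-x}+\sqrt\mu\,\alpha\dual{\nabla F(x),w-x^*}.
\]
The two $\nabla F$ cross terms combine into $\sqrt\mu\,\alpha\dual{\nabla F(x),x-x^*}$. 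This cancellation of the $w$-dependence in the gradient pairing is exactly what the specific choice of $b$ buys, and it is the key step to verify carefully.

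Next, bound the two remaining inner products. By strong convexity,
\[
\dual{\nabla F(x),x-x^*}\geq F(x)-F(x^*)+\tfrac{\mu}{2}\nm{x-x^*}^2 .
\]
For the other term, use the identity
\[
\dual{w-x^*,w-x}=\tfrac12\nm{w-x^*}^2+\tfrac12\nm{w-x}^2-\tfrac12\nm{x-x^*}^2 .
\]
Multiplied by $\mu^{3/2}\alpha$, its negative $\nm{x-x^*}^2$ part exactly cancels the $\tfrac{\mu}{2}\nm{x-x^*}^2$ gained from strong convexity. Dropping the nonnegative term $\tfrac{\mu^{3/2}\alpha}{2}\nm{w-x}^2$ leaves
\[
-\dual{\nabla\mathcal E,\mathcal G}\geq \sqrt\mu\,\alpha\Big[F(x)-F(x^*)+\tfrac{\mu}{2}\nm{w-x^*}^2\Big]+\tfrac32\eta\sqrt s\nm{\nabla F(x)}^2 .
\]
Finally, since $\alpha\leq1$ and $\gamma\geq2$, we have $b\leq\mu$. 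Hence $\tfrac{\mu}{2}\nm{w-x^*}^2\geq\tfrac b2\nm{w-x^*}^2$, which yields \cref{eq:chb-slp}. The hypothesis $3\eta\sqrt{\mu s}<1$ enters only to ensure $\alpha>0$, so that $b>0$ and $\mathcal E\geq0$.

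For the exponential rate, along the trajectory we have $\frac{\rm d}{{\rm d}t}\mathcal E=\dual{\nabla\mathcal E,\mathcal G}\leq-\sqrt\mu\,\alpha\,\mathcal E$, after discarding the nonnegative gradient term. Gr\"onwall's inequality then gives \cref{eq:exp-chb-slp}. The only real obstacle is the bookkeeping in the second step, namely checking that $b\gamma/2=\mu\alpha$ produces the cancellation; the rest is standard.
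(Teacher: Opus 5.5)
Your proposal is correct and follows essentially the same route as the paper. You expand $-\langle\nabla\mathcal E,\mathcal G\rangle$ and use $b\gamma/2=\mu\alpha$ to merge the gradient cross terms into $\sqrt\mu\,\alpha\langle\nabla F(x),x-x^*\rangle$. You then apply strong convexity and the three-point identity so that the $\|x-x^*\|^2$ terms cancel, and drop the nonnegative leftovers. The only cosmetic difference is that the paper computes the surplus coefficient $\tfrac{11}{4}\eta b\mu\sqrt{s}$ on $\|w-x^*\|^2$ explicitly, whereas you absorb it through $b\le\mu$, which is the same inequality.
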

\begin{proof}
	It is easy to show the exists and uniqueness of the  global $C^1$-smooth solution $ (x(t),w(t))$. Let us verify the strong Lyapunov property \cref{eq:chb-slp}. Observing \cref{eq:chb,eq:hbLyapunov}, a direct computation leads to
	\[
	\begin{aligned}
		{}& -\dual{\nabla \mathcal{E}(x,w), \mathcal{G}(x,w)} \\
		={}& - \sqrt{\mu }(1-3\eta \sqrt{\mu s})\left\langle \nabla F(x),w-x\right\rangle +\frac{3}{2}\eta \sqrt{s}\left\| \nabla F(x)\right\| ^2 \\
		{}&\quad+\frac{b\sqrt{\mu}}{2}(2+5\eta \sqrt{\mu s})\left\langle w-x^*,w-x \right\rangle+\frac{b}{2\sqrt{\mu}}(2+5\eta \sqrt{\mu s})\left\langle \nabla F(x),w-x^*\right\rangle \\
		={}	&  \sqrt{\mu }(1-3\eta\sqrt{\mu s})\left\langle \nabla F(x),x-x^*\right\rangle+\frac{3}{2}\eta \sqrt{s}\left\| \nabla F(x)\right\|^2  \\ 
		{}&\quad -\frac{b\sqrt{\mu}}{2}(2+5\eta \sqrt{\mu s})\left\langle x-w,w-x^* \right\rangle.
	\end{aligned}
	\]
	Due to the strongly convex property, the first cross term is bounded above by 
	\[\small
	\begin{aligned}
		\left\langle \nabla F(x),x-x^*\right\rangle
		\geq     F(x)-F(x^*)+\frac{\mu}{2}\|x-x^*\|^2,
	\end{aligned}
	\]
	and the last cross term can be expanded to 
	\[
	\left\langle x-w,w-x^* \right\rangle=\frac{1}{2} \left(\left\| x-x^*\right\|^2-\left\| x-w\right\| ^2-\left\| w-x^*\right\|^2\right).
	\] 
	Adding all together, we get
	\[
	\begin{aligned}
		-\dual{\nabla \mathcal{E}(x,w), \mathcal{G}(x,w)} 
		\geq{}&   \sqrt{\mu }(1-3\eta\sqrt{\mu s})\left( F(x)-F(x^*) \right)+\frac{3 }{2}\eta \sqrt{s}\left\| \nabla F(x)\right\|^2\\
		{}&\quad +\frac{b\sqrt{\mu}}{4}(2+5\eta \sqrt{\mu s})\left(\left\| x-w\right\| ^2+\left\| w-x^*\right\|^2\right)\\
		={}&  \sqrt{\mu }(1-3\eta\sqrt{\mu s})\mathcal{E}(x,w)+\frac{3 }{2}\eta \sqrt{s}\left\| \nabla F(x)\right\|^2 \\
		{}&\quad +\frac{b\sqrt{\mu}}{4}(2+5\eta \sqrt{\mu s})\left\| x-w\right\| ^2+\frac{11\eta b}{4}\mu\sqrt{s}\left\| w-x^*\right\|^2 \\
		\geq{}	&  \sqrt{\mu }(1-3\eta\sqrt{\mu s})\mathcal{E}(x,w)+\frac{3 }{2}\eta \sqrt{s}\left\| \nabla F(x)\right\|^2. 
	\end{aligned}
	\]
	This implies the strong Lyapunov property \cref{eq:chb-slp}.
	
	Let $ (x(t),w(t))$ be the global $C^1$-smooth solution. Notice that 
	\[
	\frac{\dd}{\dd t}\mathcal E(x,w)=  \dual{\nabla \mathcal{E}(x(t),w(t)), \mathcal{G}(x(t),w(t))}\leq -\sqrt{\mu }(1-3\eta\sqrt{\mu s})\mathcal{E}(x(t),w(t)),
	\]
	which yields immediately the exponential rate \cref{eq:exp-chb-slp} and thus  completes the proof. 
\end{proof}

	\subsection{Correction for discrete-time HB}
	Now, let us consider a semi-implicit scheme for the $O(\sqrt{s})$-correction ODE \cref{eq:chb}: 
	\begin{equation}\label{eq:chb-dis}\small
		\tag{cHB}
		\left\{
		\begin{aligned}\
			\frac{x_{k+1}-x_k}{\sqrt{s}} ={}&\sqrt{\mu}(1-3\eta \sqrt{\mu s})(w_k-x_{k+1})-\frac{3}{2}\eta \sqrt{s}\nabla F(x_k),\\	
			\frac{w_{k+1}-w_k}{\sqrt{s}}={}&-  \frac{\sqrt{\mu}}{2}(2+5\eta\sqrt{\mu s})(w_{k+1}-x_{k+1})- \frac{1}{2\sqrt{\mu }}(2+ 5\eta\sqrt{\mu s})\nabla F(x_{k+1}),
		\end{aligned}
		\right.
	\end{equation}
	which leads to an $O(\sqrt{s})$-correction to the original \cref{eq:hb} method. Based on the discrete analogue to \cref{eq:hbLyapunov}:
	\begin{equation}\label{eq:lk}
		\mathcal{E}_k:=\mathcal{E}(x_k,w_k)= F(x_k)-F(x^*) +\frac{b}{2}\|w_k-x^*\|^2, 
	\end{equation}
	and the strong Lyapunov property \cref{eq:chb-slp}, we are able to establish the optimal linear convergence rate of the correction scheme \cref{eq:chb-dis}. 
	\begin{thm}
		Assume $F\in \mathcal{S}_{\mu,L}^{1}(\R^n)$ with $L\geq\mu>0$.  Let $\{(x_k,w_k)\}$ be generated by \cref{eq:chb-dis} with $			3\eta\sqrt{\mu s}>1$ and
		\begin{equation}\label{eq:cond-s-chb}
			9 L\eta^2s+4(1+5\eta\sqrt{\mu s}/2)(1-3\eta \sqrt{\mu s})   \leq 12 \eta  ,
		\end{equation}
		then we have 
		\begin{equation}\label{eq:dis-chb-slp}
			\mathcal E_{k+1}-\mathcal E_k\leq -\sqrt{\mu s}(1 - 3\eta \sqrt{\mu s})\mathcal{E}_{k+1},
		\end{equation}
		which implies that 
		\begin{equation}\label{eq:exp-dis-chb-slp}
			\mathcal{E}_k\leq\left(1+\sqrt{\mu s}(1-3\eta\sqrt{\mu s})\right)^{-k}\mathcal{E} _0,\quad \forall\,k\in\mathbb N.
		\end{equation} 
	\end{thm}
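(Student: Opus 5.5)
The plan is to mimic the continuous strong Lyapunov argument at the new point $(x_{k+1},w_{k+1})$, since \cref{eq:chb-dis} is implicit in $x_{k+1}$ in the coupling term and fully implicit in the $w$-equation. Throughout I read the step-size hypothesis as $3\eta\sqrt{\mu s}<1$. This is what makes $b>0$ and the rate in \cref{eq:dis-chb-slp} meaningful, and it is also the hypothesis of the continuous theorem.

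\textbf{Step 1: split the increment.} I expand
\[
\mathcal E_{k+1}-\mathcal E_k = \big[F(x_{k+1})-F(x_k)\big] + b\dual{w_{k+1}-x^*,\,w_{k+1}-w_k} - \frac b2\nm{w_{k+1}-w_k}^2 .
\]
For the function part I use the $L$-smoothness/convexity inequality from \cref{sec:notation} with the roles $x=x_{k+1}$, $y=x_k$:
\[
F(x_{k+1})-F(x_k)\le \dual{\nabla F(x_{k+1}),x_{k+1}-x_k}-\frac1{2L}\nm{\nabla F(x_{k+1})-\nabla F(x_k)}^2 .
\]
Then I substitute $x_{k+1}-x_k$ and $w_{k+1}-w_k$ from \cref{eq:chb-dis}. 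In each substitution I rewrite the right-hand side as $\sqrt{s}\,\mathcal G(x_{k+1},w_{k+1})$ plus a discrepancy. For the $w$-equation there is no discrepancy, because it is already evaluated at $(x_{k+1},w_{k+1})$. For the $x$-equation the discrepancy is
\[
\sqrt{s}\Big[\sqrt\mu(1-3\eta\sqrt{\mu s})(w_k-w_{k+1}) - \tfrac32\eta\sqrt s\big(\nabla F(x_k)-\nabla F(x_{k+1})\big)\Big].
\]

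\textbf{Step 2: reuse the continuous computation.} The main term is exactly $\sqrt{s}\dual{\nabla\mathcal E(x_{k+1},w_{k+1}),\mathcal G(x_{k+1},w_{k+1})}$. The strong Lyapunov property \cref{eq:chb-slp} bounds it by
\[
-\sqrt{\mu s}(1-3\eta\sqrt{\mu s})\mathcal E_{k+1}-\frac{3\eta s}{2}\nm{\nabla F(x_{k+1})}^2 .
\]
I keep both the $\|\nabla F(x_{k+1})\|^2$ term and the nonnegative terms $\frac{b\sqrt\mu}{4}(2+5\eta\sqrt{\mu s})\|x-w\|^2$ that were dropped in that proof. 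These, together with $-\frac b2\|w_{k+1}-w_k\|^2$ and $-\frac1{2L}\|\nabla F(x_{k+1})-\nabla F(x_k)\|^2$, form the budget for absorbing the discrepancy.

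\textbf{Step 3: absorb the two cross terms (main obstacle).} The first cross term is
\[
\sqrt{\mu s}(1-3\eta\sqrt{\mu s})\dual{\nabla F(x_{k+1}),w_k-w_{k+1}} .
\]
I bound it by Young's inequality against $\frac b2\|w_{k+1}-w_k\|^2$ and a portion of $\frac{3\eta s}{2}\|\nabla F(x_{k+1})\|^2$. The constant this produces is $\frac{\mu s(1-3\eta\sqrt{\mu s})^2}{2b}=\frac{s}{2}(1-3\eta\sqrt{\mu s})(1+5\eta\sqrt{\mu s}/2)$, which comes from the definition of $b$. The second cross term is
\[
\frac32\eta s\dual{\nabla F(x_{k+1}),\nabla F(x_{k+1})-\nabla F(x_k)} .
\]
I bound it against $\frac1{2L}\|\nabla F(x_{k+1})-\nabla F(x_k)\|^2$, which costs $\frac{9L\eta^2s^2}{8}\|\nabla F(x_{k+1})\|^2$. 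Summing the two costs and requiring that they not exceed $\frac{3\eta s}{2}\|\nabla F(x_{k+1})\|^2$ should, up to the Young weights, be exactly condition \cref{eq:cond-s-chb}:
\[
9L\eta^2 s+4(1+5\eta\sqrt{\mu s}/2)(1-3\eta\sqrt{\mu s})\le12\eta .
\]
I will tune the Young weights to match this inequality; this bookkeeping is where care is needed. If the constants do not close directly, I would also draw on the leftover $\|x_{k+1}-w_{k+1}\|^2$ term.

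\textbf{Step 4: conclude.} Once the cross terms are absorbed, I obtain \cref{eq:dis-chb-slp}. Rearranging gives
\[
\mathcal E_{k+1}\le\big(1+\sqrt{\mu s}(1-3\eta\sqrt{\mu s})\big)^{-1}\mathcal E_k ,
\]
and induction on $k$ yields \cref{eq:exp-dis-chb-slp}.
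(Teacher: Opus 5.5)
Your proposal is correct and follows the paper's proof essentially step for step. Both use the same splitting of $\mathcal E_{k+1}-\mathcal E_k$, the same discrepancy $\sqrt{s}\Delta_{k+1}$ in the $x$-update, the strong Lyapunov property \cref{eq:chb-slp} at $(x_{k+1},w_{k+1})$, and the same two Young inequalities; their costs $\frac{9L\eta^2s^2}{8}+\frac{s}{2}(1-3\eta\sqrt{\mu s})(1+5\eta\sqrt{\mu s}/2)$, measured against $\frac{3\eta s}{2}$, already reproduce \cref{eq:cond-s-chb} exactly, so no extra tuning and no use of the $\|x_{k+1}-w_{k+1}\|^2$ term is needed, and your reading $3\eta\sqrt{\mu s}<1$ is precisely what the paper's proof invokes (the $>1$ in the statement is a typo).
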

	
	\begin{proof}
		The linear rate \cref{eq:exp-dis-chb-slp} follows from the contraction estimate \cref{eq:dis-chb-slp} easily. Thus, it is sufficient to establish \cref{eq:dis-chb-slp}.
		In view of \cref{eq:chb,eq:chb-dis}, we have
		\[
		\left\{
		\begin{aligned}
			w_{k+1}={}&w_k+\sqrt{s}\mathcal G_w(x_{k+1},w_{k+1}),\\
			x_{k+1}={}&x_k+\sqrt{s}\mathcal G_x(x_{k+1},w_{k+1})+\sqrt{s}\Delta_{k+1},
		\end{aligned}
		\right.
		\]
		where $\mathcal G_x$ and $\mathcal G_w$ are respectively the first and the second component of $\mathcal G$ defined in \cref{eq:chb} and 
		\[
		\Delta_{k+1}:=\sqrt{\mu}(1-3\eta \sqrt{\mu s})(w_k-w_{k+1})+\frac{3}{2}\eta \sqrt{s}\left(\nabla F(x_{k+1})-\nabla F(x_k)\right).
		\]	
		Following the proof of \cref{thm:CP-ODE-proof}, we start from the difference
		\[
		\begin{aligned}
			{}&			\mathcal{E}_{k+1}-\mathcal{E}_{k}=  \mathcal{E}\left(x_{k+1}, w_{k}\right)-\mathcal{E}\left(x_{k}, w_{k}\right)  
			+\mathcal{E}\left(x_{k+1}, w_{k+1}\right)-\mathcal{E}\left(x_{k+1}, w_{k}\right)\\
			={}&F(x_{k+1})-F(x_k) +\left\langle \nabla_w \mathcal{E}(x_{k+1},w_{k+1}),w_{k+1}-w_k\right\rangle -\frac{b}{2}\left\| w_{k+1}-w_k\right\| ^2\\
			={}&F(x_{k+1})-F(x_k) +\sqrt{s}\left\langle \nabla_w\mathcal{E}(x_{k+1},w_{k+1}),\mathcal{G}_w(x_{k+1},w_{k+1})\right\rangle -\frac{b}{2}\left\| w_{k+1}-w_k\right\| ^2.
		\end{aligned}
		\] 
		We now estimate the first term as follows
		\[
		\begin{aligned}
			F(x_{k+1})-F(x_k)\leq 	{}		& \left\langle \nabla F(x_{k+1}),x_{k+1}-x_k\right\rangle -\frac{1}{2L}\left\| \nabla F(x_{k+1})-\nabla F(x_k)\right\| ^2\\
			={}	&\left\langle \nabla_x\mathcal{E}(x_{k+1},w_{k+1}),x_{k+1}-x_k\right\rangle -\frac{1}{2L}\left\| \nabla F(x_{k+1})-\nabla F(x_k)\right\| ^2\\
			={}	&\sqrt{s}\left\langle \nabla_x\mathcal{E}(x_{k+1},w_{k+1}),\mathcal{G}_x(x_{k+1},w_{k+1})\right\rangle -\frac{1}{2L}\left\| \nabla F(x_{k+1})-\nabla F(x_k)\right\| ^2\\
			{}&\quad + \sqrt{s}\dual{\nabla F(x_{k+1}),\Delta_{k+1}}.
		\end{aligned}
		\]
		This implies that 
		\[
		\begin{aligned}
			\mathcal{E}_{k+1}-\mathcal{E}_{k} 
			\leq {} &\sqrt{s}\left\langle \nabla\mathcal{E}(x_{k+1},w_{k+1}),\mathcal{G}(x_{k+1},w_{k+1})\right\rangle -\frac{b}{2}\left\| w_{k+1}-w_k\right\| ^2\\
			{}&\quad	-\frac{1}{2L}\left\| \nabla F(x_{k+1})-\nabla F(x_k)\right\| ^2
			+ \sqrt{s}\dual{\nabla F(x_{k+1}),\Delta_{k+1}}.
		\end{aligned}
		\]
		Since $3\eta\sqrt{\mu s}<1$, invoking the strong Lyapunov property \cref{eq:chb-slp}, we obtain 
		\[
		\begin{aligned}
			\mathcal{E}_{k+1}-\mathcal{E}_{k} 
			\leq {} &-\sqrt{\mu s}(1 - 3\eta \sqrt{\mu s})\mathcal{E}_{k+1}- \frac{3\eta s}{2}  \|\nabla F(x_{k+1})\|^2-\frac{b}{2}\left\| w_{k+1}-w_k\right\| ^2\\
			{}&\quad	-\frac{1}{2L}\left\| \nabla F(x_{k+1})-\nabla F(x_k)\right\| ^2
			+ \sqrt{s}\dual{\nabla F(x_{k+1}),\Delta_{k+1}}.
		\end{aligned}
		\]
		Let us focus on the last cross term:
		\[
		\begin{aligned}
			\dual{\nabla F(x_{k+1}),\Delta_{k+1}}={}&\sqrt{\mu }(1-3\eta \sqrt{\mu s}) \left\langle  \nabla F(x_{k+1}), w_k-w_{k+1} \right\rangle\\
			{}&\quad  +\frac{3\eta \sqrt{s}}{2}\left\langle  \nabla F(x_{k+1}),\nabla F(x_{k+1})-\nabla F(x_k)\right\rangle\\
			\leq {}&\frac{1}{2L\sqrt{s}}\left\| \nabla F(x_{k+1})-\nabla F(x_k)\right\| ^2+ \frac{9L\eta^2s^{3/2}}{8}\left\| \nabla F(x_{k+1})\right\| ^2 \\
			{}&\quad +\frac{b}{2\sqrt{s}}\left\| w_{k+1}-w_k\right\|^2+\frac{ \mu\sqrt{s} }{2b}(1-3\eta \sqrt{\mu s})^2 \left\| \nabla F(x_{k+1})\right\|^2,
		\end{aligned}
		\]
		with $b = \mu(1-3\eta\sqrt{\mu s})/(1+5\eta\sqrt{\mu s}/2)$.
		Consequently, it follows that
		\[
		\begin{aligned}
			\mathcal{E}_{k+1}-\mathcal{E}_{k} 
			\leq {} &-\sqrt{\mu s}(1 - 3\eta \sqrt{\mu s})\mathcal{E}_{k+1}+\frac{a(s)s}{8 } \|\nabla F(x_{k+1})\|^2,
		\end{aligned}
		\]
		where $a(s)=9 L\eta^2s+4(1+5\eta\sqrt{\mu s}/2)(1-3\eta \sqrt{\mu s})   - 12 \eta  \leq0 $ by  \cref{eq:cond-s-chb}. This implies the contraction estimate \cref{eq:dis-chb-slp} immediately and concludes the proof of this theorem.
	\end{proof}
	\begin{rem}
		The final rate given by \cref{eq:exp-dis-chb-slp} is $(1+\rho(\eta,s))^{-k}$ with $\rho(\eta,s):= \sqrt{\mu s}(1-3\eta\sqrt{\mu s})=O(1)\sqrt{\mu/L}$, which, ignoring the constant $O(1)$, is optimal with respect to the condition number $\sqrt{L/\mu}$. In particular, the optimal choice 
		\begin{equation}\label{eq:opt-eta-s}
			\eta^{*}=\dfrac{\sqrt{L}(11\sqrt{\mu}+6\sqrt{L})}{9(2\sqrt{\mu}+\sqrt{L})^{2}},\quad  s^*=\frac{36(2\sqrt{\mu}+\sqrt{L})^2}{L(11\sqrt{\mu}+6\sqrt{L})^2},
		\end{equation}
		yields the maximal contraction constant
		\[	
		\rho(\eta^*,s^*) = 	\dfrac{6\sqrt{\mu}}{11\sqrt{\mu}+6\sqrt{L}}.
		\] 
	\end{rem}

\section{Numerical Experiments}
\label{sec:ex-hb}
In this section, we present two simple examples to show the performances of our $O(\sqrt{s})$-correction schemes \cref{eq:ex-osr-pdhg,eq:chb-dis}.
\subsection{A high-dimensional counterexample for PDHG}
He et al. \cite[Section 3]{he_convergence_2022} showed that PDHG \cref{eq:pdhg} (also known as the Arrow-Hurwicz method) diverges for the bilinear saddle point problem：
\begin{equation}\label{eq:bspp}
	\min_{x \in \mathbb{R}^n}\;\max_{y \in \mathbb{R}^m} {\mathcal{L}(x,y)=y^TAx},
\end{equation} 
with $A\in \mathbb{R}^{m\times n}(n\geq m)$. In \cref{fig:bsp} we report the numerical results of  PDHG, CP and \cref{eq:ex-osr-pdhg} with the step size $s=0.5/\nm{A}$. For \cref{eq:ex-osr-pdhg}, we take $\eta_1=3/2,\eta_2=1/12$ and $\theta=1$.
%
As shown in \cref{fig:bsp}, although its convergence rate is slightly slower than that of CP, the modified method \cref{eq:ex-osr-pdhg} achieves significantly improved performance than  PDHG, which exhibits persistent limit cycle behavior and fails to converge. 
\begin{figure}[H]
	\centering  
	\subfloat[$m=n=1$.]
	{
		\begin{minipage}[t]{0.3\textwidth}
			\centering          
			\includegraphics[width=0.9\textwidth]{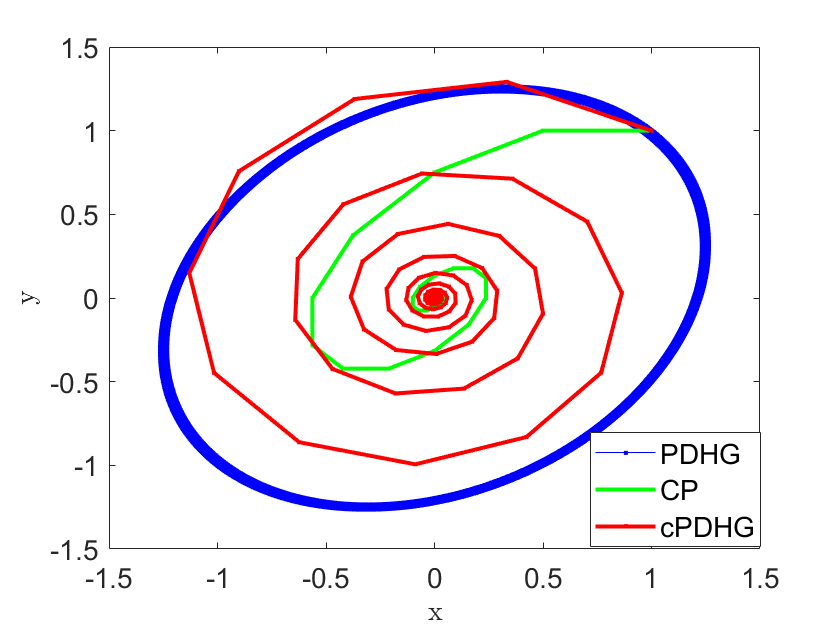}   
		\end{minipage}%
	}\centering  
	\subfloat[$m=5,n=8$.] 
	{
		\begin{minipage}[t]{0.3\textwidth}
			\centering          
			\includegraphics[width=0.9\textwidth]{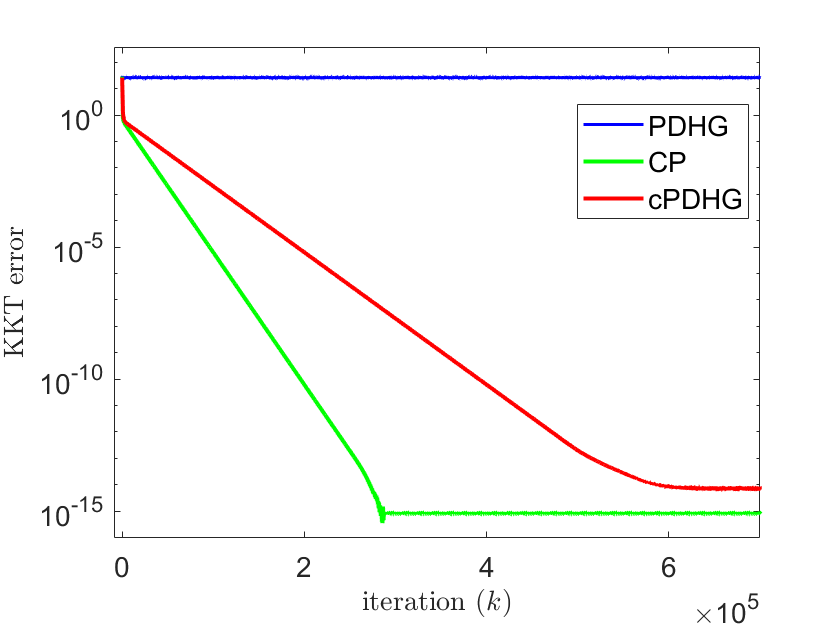}   
		\end{minipage}
	}
	\subfloat[$m=n=50$.]
	{
		\begin{minipage}[t]{0.3\textwidth}
			\centering          
			\includegraphics[width=0.9\textwidth]{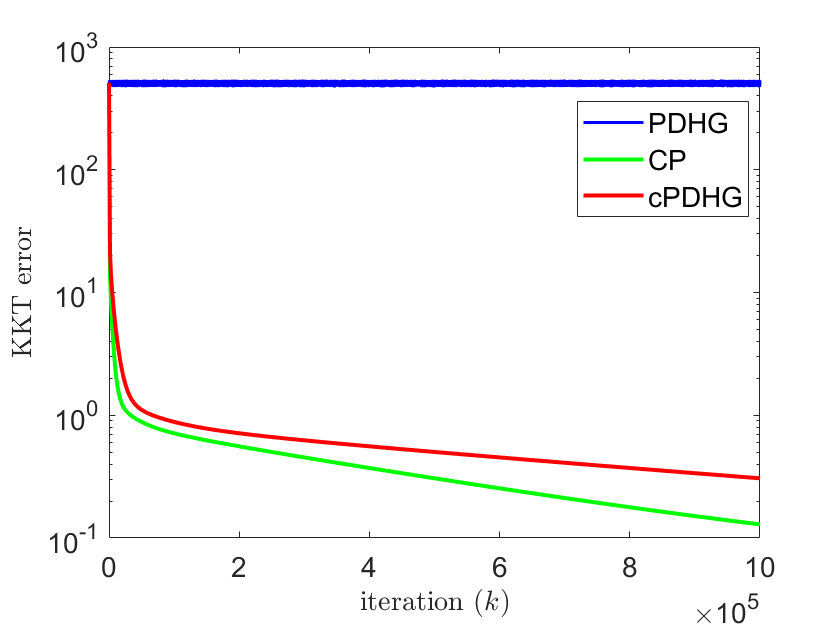}   
		\end{minipage}%
	}
	\caption{Numerical results for the bilinear saddle point problems \cref{eq:bspp}.} 
	\label{fig:bsp}
\end{figure} 
\subsection{An illustrative example for the divergence of HB}
\label{sec:ex-hb1}
A well-known one-dimensional counterexample is given by \cite{lessard_analysis_2016}, where the objective $F$ has the following piecewise linear gradient:
\begin{equation}\label{eq:hb-counterexample}\small
	\nabla F(x) =
	\begin{cases}
		25x, & \text{if } x < 1, \\
		x + 24, & \text{if } 1 \leq x < 2, \\
		25x - 24, & \text{if } x \geq 2.
	\end{cases}
\end{equation}
This gradient function is continuous and monotone, and the primal function $F$ belongs to the class $\mathcal{S}^1_{\mu,L}$ with $\mu = 1$ and $L = 25$. As reported in \cite{lessard_analysis_2016}, if we take Polyak's parameter setting (cf.\cref{eq:s-beta-hb}): $\beta = (1 - \sqrt{\mu s})^2 = 4/9$ and $s = 4/{(\sqrt{L} + \sqrt{\mu})^2} = 1/9$, then \cref{eq:hb} exhibits pathological oscillatory behavior for the initial value $3.07 \leq x_0 \leq 3.46$; see \cref{fig:HB-noncovergence}.  
\begin{figure}[H]
	\centering  
	\subfloat[$s=1/9,x_1=x_0=3.25$]
	{
		\begin{minipage}[t]{0.35\textwidth}
			\centering          
			\includegraphics[width=0.8\textwidth]{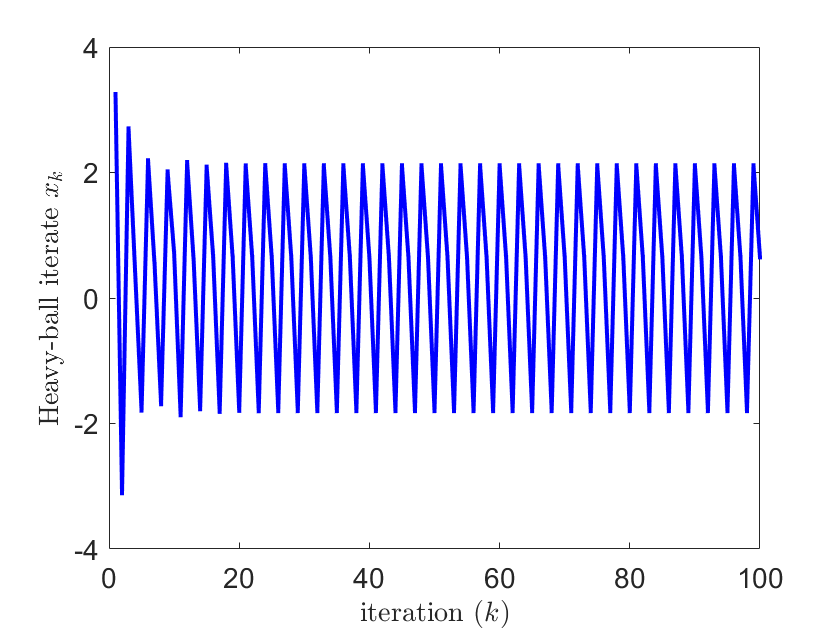}   
		\end{minipage}%
		\label{fig:HB-noncovergence}
	}\centering  
	\subfloat[$\eta=0.465,s=0.042,(x_0,w_0)=(3.25,0)$]
	{
		\begin{minipage}[t]{0.35\textwidth}
			\centering          
			\includegraphics[width=0.8\textwidth]{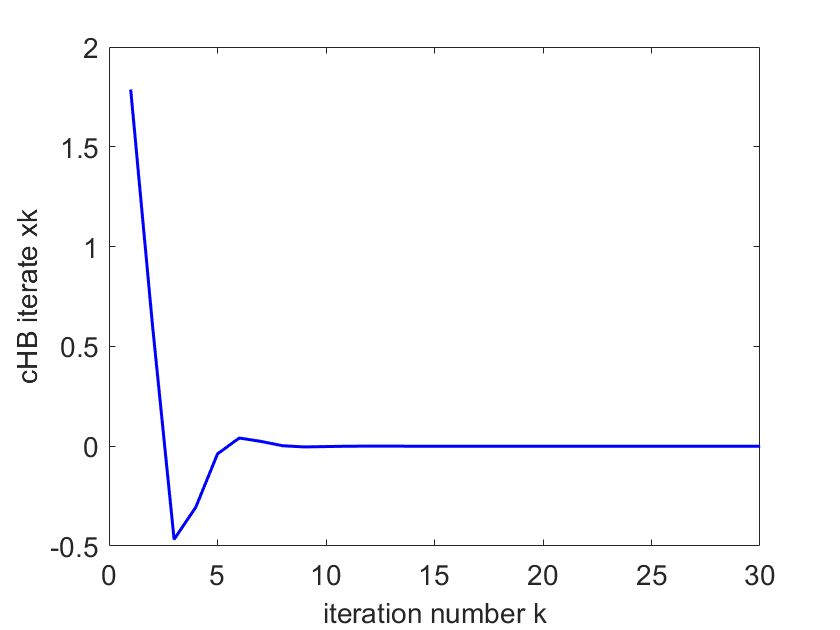}   
		\end{minipage}%
		\label{fig:hb-covergence} 
	}
	
	\centering
	\subfloat[$s=0.042,x_1=x_0=3.25$]
	{
		\begin{minipage}[t]{0.35\textwidth}
			\centering          
			\includegraphics[width=0.8\textwidth]{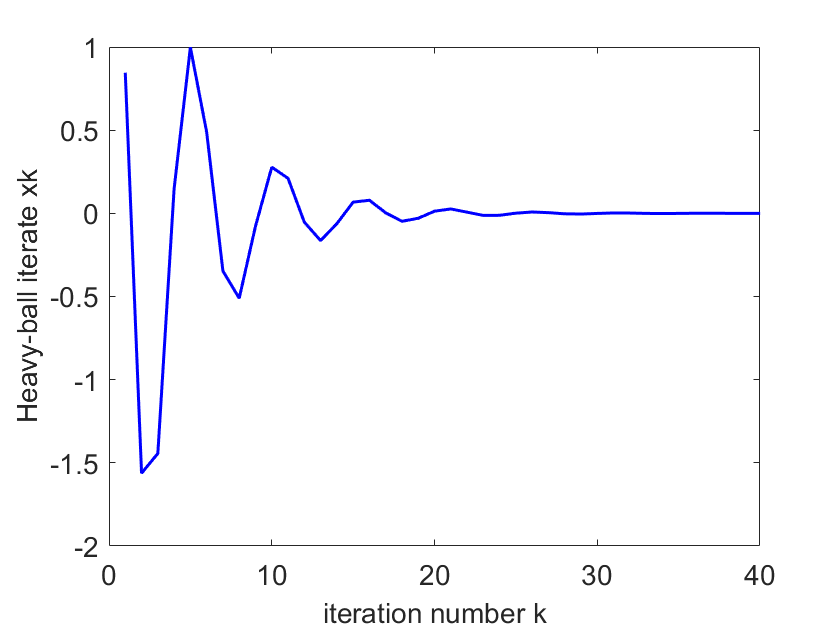}   
		\end{minipage}%
		\label{fig:HB-noncovergence1}
	}\centering  
	\subfloat[$\eta=0.465,s=0.042$]
	{
		\begin{minipage}[t]{0.35\textwidth}
			\centering          
			\includegraphics[width=0.8\textwidth]{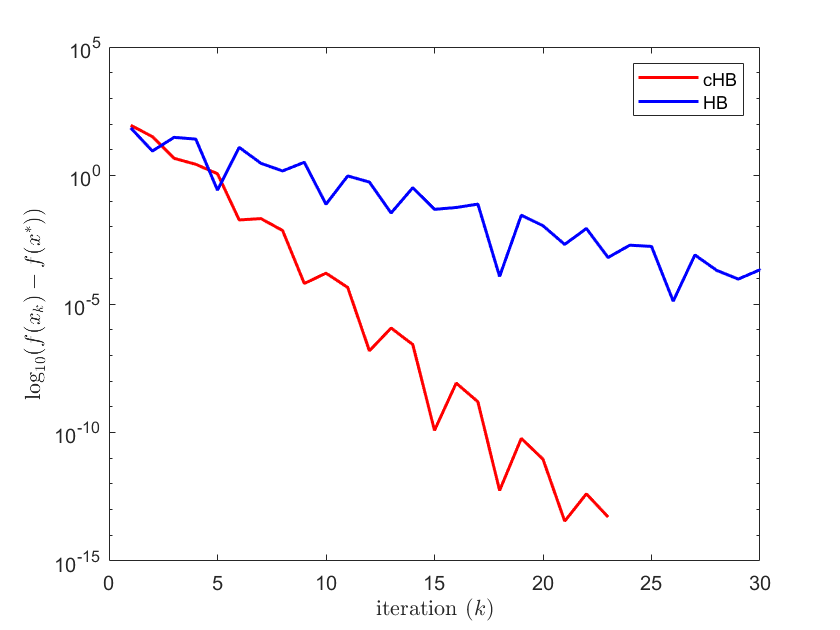}   
		\end{minipage}%
		\label{fig:hb-covergence1} 
	}
	\caption{Performances of different algorithms for minimizing the objective defined by \cref{eq:hb-counterexample}}
	\label{fig:hb}
\end{figure}

For comparison,  we also examine the behavior of \cref{eq:chb-dis} on this counterexample with the optimal choice \cref{eq:opt-eta-s}: $\eta^*=205/441\approx 0.465$ and $s^*=
1764/42025\approx 0.042$. For this step size, both \cref{eq:hb,eq:chb-dis} converge to the optimal solution $x^*=0$; see \cref{fig:hb-covergence,fig:HB-noncovergence1}.
However, \cref{eq:chb-dis} converges more stably and exhibits faster rate, as shown in \cref{fig:hb-covergence1}. 

\section{Conclusion}
\label{sec:con}
In this work, we propose a unified high-resolution ODE framework for the analysis of accelerated gradient methods with momentum and variable parameters. Our work extends the $O(s^r)$-resolution framework by Lu \cite{lu_osr-resolution_2022} without the fixed-point assumption. Also, a careful investigation on NAG and HB rebuilds the result by Shi et al. \cite{shi_understanding_2021}: the hidden gradient correction or Hessian-driven damping makes NAG more stable than HB, which only involves the velocity correction. In addition, we propose a high-order correction approach for HB and PDHG, and prove the optimal convergence rates via the Lyapunov analysis.
	
	\bibliographystyle{abbrv}

\begin{thebibliography}{10}
	
	\bibitem{Apidopoulos2025}
	V.~Apidopoulos, C.~Molinari, J.~Peypouquet, and S.~Villa.
	\newblock Preconditioned primal-dual dynamics in convex optimization:
	non-ergodic convergence rates.
	\newblock {\em arXiv:2506.00501v1}, 2025.
	
	\bibitem{Attouch2020f}
	H.~Attouch, Z.~Chbani, J.~Fadili, and H.~Riahi.
	\newblock First-order optimization algorithms via inertial systems with
	{Hessian} driven damping.
	\newblock {\em Math. Program.}, 193:113--155, 2020.
	
	\bibitem{attouch_fast_2018}
	H.~Attouch, Z.~Chbani, J.~Peypouquet, and P.~Redont.
	\newblock Fast convergence of inertial dynamics and algorithms with asymptotic
	vanishing viscosity.
	\newblock {\em Math. Program.}, 168(1):123--175, 2018.
	
	\bibitem{attouch_rate_2019}
	H.~Attouch, Z.~Chbani, and H.~Riahi.
	\newblock Rate of convergence of the {Nesterov} accelerated gradient method in
	the subcritical case $b\leqslant 3$.
	\newblock {\em ESAIM Control Optim. Calc. Var.}, 25(2), 2019.
	
	\bibitem{Barrett2021}
	D.~G.~T. Barrett and B.~Dherin.
	\newblock Implicit gradient regularization.
	\newblock In {\em 9th International Conference on Learning Representations,
		ICLR.}, 2021.
	
	\bibitem{chambolle_first-order_2011}
	A.~Chambolle and T.~Pock.
	\newblock A first-order primal-dual algorithm for convex problems with
	applications to imaging.
	\newblock {\em J. Math. Imaging Vision}, 40(1):120--145, 2011.
	
	\bibitem{chen_first_2019}
	L.~Chen and H.~Luo.
	\newblock First order optimization methods based on {Hessian-driven Nesterov}
	accelerated gradient flow.
	\newblock {\em arXiv:1912.09276}, 2019.
	
	\bibitem{chen_luo_unified_2021}
	L.~Chen and H.~Luo.
	\newblock A unified convergence analysis of first order convex optimization
	methods via strong {Lyapunov} functions.
	\newblock {\em arXiv:2108.00132}, 2021.
	
	\bibitem{esser_general_2010}
	E.~Esser, X.~Zhang, and T.~F. Chan.
	\newblock A general framework for a class of first order primal-dual algorithms
	for convex optimization in imaging science.
	\newblock {\em SIAM J. Imaging Sci.}, 3(4):1015--1046, 2010.
	
	\bibitem{fu_2023_understanding}
	P.~Fu and Z.~Tan.
	\newblock Understanding accelerated gradient methods: Lyapunov analyses and
	{Hamiltonian} assisted interpretations.
	\newblock {\em arxiv:2304.10063}, 2023.
	
	\bibitem{ghadimi_global_2015}
	E.~Ghadimi, H.~R. Feyzmahdavian, and M.~Johansson.
	\newblock Global convergence of the {Heavy}-ball method for convex
	optimization.
	\newblock In {\em 2015 {European} {Control} {Conference} ({ECC})}, pages
	310--315, Linz, Austria, 2015. IEEE.
	
	\bibitem{goujaud2025provable}
	B.~Goujaud, A.~Taylor, and A.~Dieuleveut.
	\newblock Provable non-accelerations of the heavy-ball method.
	\newblock {\em Math. Program.}, pages 1--59, 2025.
	
	\bibitem{Hairer_2006_Geometric}
	E.~Hairer, C.~Lubich, and G.~Wanner.
	\newblock {\em Geometric Numerical Integration: Structure-Preserving Algorithms
		for Ordinary Differential Equations}, volume~31 of {\em Springer Series in
		Computational Mathematics}.
	\newblock Springer-Verlag, Berlin, 2nd edition, 2006.
	
	\bibitem{he_convergence_2022}
	B.~He, S.~Xu, and X.~Yuan.
	\newblock On convergence of the {Arrow}–{Hurwicz} method for saddle point
	problems.
	\newblock {\em J. Math. Imaging Vision}, 64(6):662--671, 2022.
	
	\bibitem{he_convergence_2014}
	B.~He, Y.~You, and X.~Yuan.
	\newblock On the {convergence} of {primal}-{dual} {hybrid} {gradient}
	{algorithm}.
	\newblock {\em SIAM J. Imaging Sci.}, 7(4):2526--2537, 2014.
	
	\bibitem{krichene_accelerated_2015}
	W.~Krichene, A.~Bayen, and P.~L. Bartlett.
	\newblock Accelerated mirror descent in continuous and discrete time.
	\newblock {\em Advances in Neural Information Processing Systems (NIPS)}, 28,
	2015.
	
	\bibitem{lessard_analysis_2016}
	L.~Lessard, B.~Recht, and A.~Packard.
	\newblock Analysis and {design} of {optimization} {algorithms} via {integral}
	{quadratic} {constraints}.
	\newblock {\em SIAM J. Optim.}, 26(1):57--95, 2016.
	
	\bibitem{Li2024}
	B.~Li and B.~Shi.
	\newblock Understanding the {ADMM} algorithm via high-resolution differential
	equations.
	\newblock {\em arXiv:2401.07096}, 2024.
	
	\bibitem{Li2024a}
	B.~Li and B.~Shi.
	\newblock Understanding the {PDHG} algorithm via high-resolution differential
	equations.
	\newblock {\em arXiv:2403.11139v1}, 2024.
	
	\bibitem{lu_osr-resolution_2022}
	H.~Lu.
	\newblock An ${O}(s^r)$-resolution {{ODE}} framework for understanding
	discrete-time algorithms and applications to the linear convergence of
	minimax problems.
	\newblock {\em Math. Program.}, 194:1061--1112, 2022.
	
	\bibitem{luo_acc_primal-dual_2021}
	H.~Luo.
	\newblock Accelerated primal-dual methods for linearly constrained convex
	optimization problems.
	\newblock {\em arXiv:2109.12604}, 2021.
	
	\bibitem{luo_primal-dual_2022}
	H.~Luo.
	\newblock A primal-dual flow for affine constrained convex optimization.
	\newblock {\em ESAIM Control Optim. Calc. Var.}, 28(0):33, 2022.
	
	\bibitem{Luo_acclerated_2024}
	H.~Luo.
	\newblock Accelerated primal-dual proximal gradient splitting methods for
	convex-concave saddle-point problems.
	\newblock {\em arXiv:2407.20195}, 2024.
	
	\bibitem{luo_universal_2024}
	H.~Luo.
	\newblock A universal accelerated primal\textendash dual method for convex
	optimization problems.
	\newblock {\em J. Optim. Theory Appl.}, 201(1):280--312, 2024.
	
	\bibitem{luo_icpdps_2025}
	H.~Luo.
	\newblock A continuous perspective on the inertial corrected primal-dual
	proximal splitting.
	\newblock {\em Optimization}, pages 1--30, 2025.
	
	\bibitem{luo_differential_2022}
	H.~Luo and L.~Chen.
	\newblock From differential equation solvers to accelerated first-order methods
	for convex optimization.
	\newblock {\em Math. Program.}, 195:735--781, 2022.
	
	\bibitem{luo_unified_2025}
	H.~Luo and Z.~Zhang.
	\newblock A unified differential equation solver approach for separable convex
	optimization: splitting, acceleration and nonergodic rate.
	\newblock {\em Math. Comput.}, 94(356):3009--3041, 2025.
	
	\bibitem{Nesterov1983AMF}
	Y.~Nesterov.
	\newblock A method for solving the convex programming problem with convergence
	rate {$O(1/k^2)$}.
	\newblock {\em Proceedings of the USSR Academy of Sciences}, 269:543--547,
	1983.
	
	\bibitem{Nesterov2004}
	Y.~Nesterov.
	\newblock {\em Introductory Lectures on Convex Optimization: A Basic Course},
	volume~87 of {\em Applied Optimization}.
	\newblock 2004.
	
	\bibitem{2003Smooth}
	Y.~Nesterov.
	\newblock Smooth minimization of non-smooth functions.
	\newblock {\em Math. Program.}, 103(1):127--152, 2005.
	
	\bibitem{Polyak_Some_1964}
	B.~Polyak.
	\newblock Some methods of speeding up the convergence of iteration methods.
	\newblock {\em USSR Comput. Math. Math. Phys.}, 4(5):1--17, 1964.
	
	\bibitem{shi_understanding_2021}
	B.~Shi, S.~S. Du, M.~I. Jordan, and W.~J. Su.
	\newblock Understanding the acceleration phenomenon via high-resolution
	differential equations.
	\newblock {\em Math. Program.}, 195:79--148, 2022.
	
	\bibitem{siegel_accelerated_2019}
	J.~Siegel.
	\newblock Accelerated first-order methods: differential equations and
	{Lyapunov} functions.
	\newblock {\em arXiv:1903.05671}, 2019.
	
	\bibitem{su_dierential_2016}
	W.~Su, S.~Boyd, and E.~J. Cand\`{e}s.
	\newblock A differential equation for modeling {Nesterov}'s accelerated
	gradient method: theory and insights.
	\newblock {\em J. Mach. Learn. Res.}, 17:1--43, 2016.
	
	\bibitem{sun_non-ergodic_2019}
	T.~Sun, P.~Yin, D.~Li, C.~Huang, L.~Guan, and H.~Jiang.
	\newblock Non-ergodic convergence analysis of heavy-ball algorithms.
	\newblock In {\em Proceedings of the {AAAI} {Conference} on {Artificial}
		{Intelligence}}, volume~33, pages 5033--5040, 2019.
	
	\bibitem{valkonen_inertial_2020}
	T.~Valkonen.
	\newblock Inertial, corrected, primal-dual proximal splitting.
	\newblock {\em SIAM J. Optim.}, 30(2):1391--1420, 2020.
	
	\bibitem{wei2024accelerated}
	J.~Wei and L.~Chen.
	\newblock Accelerated over-relaxation heavy-ball method: Achieving global
	accelerated convergence with broad generalization.
	\newblock {\em arXiv:2406.09772}, 2024.
	
	\bibitem{wibisono_variational_2016}
	A.~Wibisono, A.~C. Wilson, and M.~Jordan.
	\newblock A variational perspective on accelerated methods in optimization.
	\newblock {\em Proc. Natl. Acad. Sci. USA}, 113(47):E7351--E7358, 2016.
	
	\bibitem{wilson_lyapunov_2021}
	A.~C. Wilson, B.~Recht, and M.~I. Jordan.
	\newblock A {Lyapunov} analysis of accelerated methods in optimization.
	\newblock {\em J. Mach. Learn. Res.}, 22:1--34, 2021.
	
	\bibitem{Yuan2024AnalyzeAM}
	Y.~Yuan and Y.~Zhang.
	\newblock Analyze accelerated mirror descent via high-resolution {ODE}s.
	\newblock {\em J. Oper. Res. Soc. China}, 2024.
	
\end{thebibliography}

\end{document}